\let\Halmos\qed
\let\ml=\mathlarger
\DeclareMathOperator{\MC}{MC}
\DeclareMathOperator{\OA}{OA}
\DeclareMathOperator{\MCOA}{MC-OA}
\DeclareMathOperator{\OAMC}{OA-MC}
\DeclareMathOperator{\OARLT}{OA-RLT}
\DeclareMathOperator{\OAhull}{OA-Hull}
\DeclareMathOperator{\proj}{proj}
\DeclareMathOperator{\epi}{epi}
\DeclareMathOperator{\RLT}{RLT}
\DeclareMathOperator{\conv}{conv}
\DeclareMathOperator{\conc}{conc}
\DeclareMathOperator{\interior}{int}
\DeclareMathOperator{\graph}{gr}
\DeclareMathOperator{\cone}{cone}
\DeclareMathOperator{\aff}{aff}
\DeclareMathOperator{\cl}{cl}
\DeclareMathOperator{\vertex}{vert}
\DeclareMathOperator{\dims}{dim}
\DeclareMathOperator{\hypo}{hyp}
\DeclareMathOperator{\sepP}{SEP-P}
\DeclareMathOperator{\sepQ}{SEP-Q}
\DeclareMathOperator{\range}{range}
\DeclareMathOperator{\dom}{dom}
\DeclareMathOperator{\pr}{Pr}
\DeclareMathOperator{\cp}{\mathcal{CP}}
\DeclareMathOperator{\E}{\mathrm{E}}
\DeclareMathOperator{\R}{\mathbb{R}}
\DeclareMathOperator{\Z}{\mathbb{Z}}
\DeclareMathOperator{\supp}{\mathrm{supp}}
\DeclareMathOperator{\rec}{rec}
\DeclareMathOperator{\ri}{ri}
\DeclareMathOperator{\QP}{QP}
\DeclareMathOperator{\BQP}{BQP}
\DeclareMathOperator{\MP}{MP}
\DeclareMathOperator{\psd}{\mathbb{S}_+^{n+1}}
\DeclareMathOperator{\st}{s.t.}
\DeclareMathOperator{\Tr}{Tr}
\DeclareMathOperator{\SM}{\mathbb{S}}
\DeclareMathOperator{\Diag}{Diag}
\newcommand{\revision}{\textcolor{black}}
\def \X {\mathcal{X}}
\definecolor{links}{RGB}{204,36,29}
\def\EMAIL#1{\href{mailto:#1}{#1}}
\def\URL#1{\href{#1}{#1}}         
\begin{document}

\title{Convexification techniques for fractional programs}

    \titlerunning{Convexification techniques for fractional programs}


\author{  Taotao He \and Siyue Liu   \and Mohit Tawarmalani
}



\institute{Taotao He \at
              Antai College of Economics and Management, Shanghai Jiao Tong University \\
              \EMAIL{hetaotao@sjtu.edu.cn}, \URL{https://taotaoohe.github.io/}           
           \and
           Siyue Liu \at 
           Tepper School of Business, 
           Carnegie Mellon University\\
           \EMAIL{siyueliu@andrew.cmu.edu}
           \and
           Mohit Tawarmalani \at
              Mitchell E. Daniels, Jr. School of Business, Purdue University\\
							\EMAIL{mtawarma@purdue.edu}, \URL{http://web.ics.purdue.edu/$\sim$mtawarma/}
}

\date{Received: date / Accepted: date}

\maketitle

\begin{abstract}
This paper develops a correspondence relating convex hulls of fractional functions with those of polynomial functions over the same domain. Using this result, we develop a number of new reformulations and relaxations for fractional programming problems. First, we relate $0\mathord{-}1$ problems involving a ratio of affine functions with the boolean quadric polytope, and use inequalities for the latter to develop tighter formulations for the former. Second, we derive a new formulation to optimize a ratio of quadratic functions over a polytope using copositive programming. Third, we show that univariate fractional functions can be convexified using moment hulls. Fourth, we develop a new hierarchy of relaxations that converges finitely to the simultaneous convex hull of a collection of ratios of affine functions of $0\mathord{-}1$ variables. Finally, we demonstrate theoretically and computationally that our techniques close a significant gap relative to state-of-the-art relaxations, require much less computational effort, and can solve larger problem instances.

				\keywords{  fractional programming; boolean quadric polytopes; quadratic optimization; copositive optimization;  moment hull; assortment optimization; distillation }
\end{abstract}

\def \s {\rho}
\def \t {\sigma}
\def \S {\mathcal{S}}
\def \F {\mathcal{F}}
\def \G {\mathcal{G}}

\section{Introduction}
Fractional terms arise in many combinatorial and nonconvex optimization problems \cite{frenk2005fractional,stancu2012fractional,borrero2017fractional} to model choice behavior, financial and performance ratios, variational principles, engineering models, and geometric characteristics. For example, Luce-type choice models~\cite{luce2012individual} are used in feature selection~\cite{nguyen2009optimizing}, assortment optimization~\cite{mendez2014branch}, and facility location problems~\cite{barros1998discrete}; financial and performance ratios are used to measure return on investment~\cite{konno1989bond,cheridito2013reward}, \revision{fairness~\cite{xinying2023guide}}, 
quality of regression models~\cite{gomez2021mixed}, efficiency of queuing systems~\cite{berman1985optimal}, performance of restricted policies~\cite{azar2003optimal}, quality of set-covers~\cite{arora1977set}, consistent biclustering~\cite{busygin2005feature}, and other graph properties such as density~\cite{lanciano2023survey} or connectedness; engineering models capture physical processes such as Underwood equations in distillation configuration design~\cite{underwood1949fractional} and exergy models in chemical processes~\cite{gooty2023exergy}; geometrical measures are used to measure flatness of shapes and condition numbers of matrices~\cite{chen2011minimizing}.

We are interested in optimization problems of the following form:
\begin{alignat*}{2}
    &\max_{x\in \X} &\quad& \sum_{i=1}^m \frac{f_i(x)}{g_i(x)},
\end{alignat*}
where $f_i:\R^n\rightarrow \R$ and $g_i:\R^n\rightarrow \R$ are polynomial functions, and $\X$ is a set described using polynomial inequalities. We will particularly be interested in the following special cases: (i) $\X = \bigl\{x\in \R^n\bigm| Cx\le d,\ x_i\in \{0,1\} \text{ for } i\le p\bigr\}$, where $C\in \R^{r\times n}$ and $d\in \R^{r}$, and (ii) functions $f_i(\cdot)$ and $g_i(\cdot)$ are affine and/or quadratic functions.

The literature on fractional programming can be divided into two categories. \revision{The first} category consists of problems \revision{that} can be solved in polynomial time. Two prominent examples include problems treated in \cite{charnes1962programming} and \cite{megiddo1978combinatorial}. These two seminal works focused on fractional programming problems where the objective function is a ratio of affine functions and the feasible region $\X$ is described, respectively, by linear inequalities or a combinatorial set over which linear functions can be optimized easily. The second category of problems do not admit polynomial time algorithms. A simple example is where the objective is to maximize a sum of two ratios of affine functions defined over a polytope~\cite{matsui1996np}. The resulting  problem is \revision{NP-hard}. Many applied problems belong to the second category, and are solved using branch \& bound algorithms that rely on convexification or reformulation strategies~\cite{falk1994image,tawarmalani2002global,benson2002using,mendez2014branch,borrero2017fractional,shen2018fractional,sen2018conic,mehmanchi2019fractional}. However, an intriguing gap exists in the literature. Specifically, if  problem instances of the first category are solved using techniques adopted for the second category, the resulting relaxations are not tight and the guarantee of polynomial time convergence is lost. This is arguably because the results that enable polynomial-time solution algorithms do not directly yield convexification results. In this paper, we discover a projective transformation that permits these results to be viewed from a convexification lens. Leveraging this insight, we improve the quality of relaxations for a variety of fractional programming problems that belong to the second category.



Below, we highlight a few of our contributions:
\begin{enumerate}
    \item We reduce the task of convexifying fractional programming problems to that of convexifying semi-algebraic sets relying on a projective transformation that is of independent interest. The reduction unifies  approaches to solve single-ratio fractional optimization problems with those to create convex relaxations for harder problems.
    \item We establish a precise connection between 0-1 linear fractional programming, where the objective is a ratio of affine functions, and the convexification of the boolean quadric polytope (BQP) and/or cut polytope. This allows us to leverage valid inequalities for BQP \cite{barahona1986cut,padberg1989boolean} to improve relaxations for $0\mathord{-}1$ fractional programming problems. Our computational results demonstrate that the developed relaxations are significantly tighter on a collection of assortment optimization problems.
    \item We show that the problem of optimizing a ratio of quadratic functions over a polytope, possibly with binary constraints, can be reformulated into a copostive programming problem. This result extends prior results in copositive optimization literature \cite{burer2009copositive}.  
    \item We discover a one-to-one correspondence between moment hulls~\cite{karlin1951geometry,schmudgen2017moment} and the convex hull of inverses of univariate linear forms. These inverses occur in practical design problems involving chemical separations \cite{underwood1949fractional,gooty2022advances,gooty2023exergy} and we show that the resulting relaxations are significantly tighter than those typically used in the literature.
    \item When the objective is a sum-of-ratios of binary variables, we establish a finite hierarchy of reformulations that converges to the convex hull of the problem. 
    The result reveals an intriguing decomposition. We show that convexifying each ratio individually, in a specific way, leads to the simultaneous hull of multiple ratios. 
    \item We provide explicit convex hulls for \revision{small-dimensional} sets extending previous results~\cite{tawarmalani2001semidefinite,burer2015gentle} that are used in solvers to develop relaxations of factorable functions.
    \item We demonstrate theoretically and computationally that our reformulations and relaxations dominate state-of-the-art schemes in the literature~\cite{quesada1995global,tawarmalani2004global,mehmanchi2019fractional}. For example, on a collection of assortment planning problems \cite{mehmanchi2021solving} and sets arising in chemical process design, we show that the new relaxations close approximately 50\% of the gap. Our reformulations solve faster and enable an exact solution of larger problem instances.
\end{enumerate}
    

The paper is organized as follows. In Section~\ref{section:mainresult}, we  establish a precise correspondence between fractional optimization and polynomial optimization. This allows us to unify the techniques of \cite{charnes1962programming} and \cite{megiddo1978combinatorial} within a common framework. In Section~\ref{section:BQP}, we exploit inequalities for BQP to tighten relaxations for $0\mathord{-}1$ fractional programs. In Section~\ref{section:copositive}, we consider problems involving optimization of a ratios of quadratic functions and, when the domain is a polytope, provide a reformulation using copositive programming. In Section~\ref{section:univariatefractional}, we establish a correspondence between simultaneously convexifying inverses of univariate affine functions and the moment hull. In Section~\ref{section:general-linear-frac}, we develop a hierarchy of relaxations that converges to the convex hull of a sum-of-ratios optimization problem over binary variables. In Section~\ref{section:computation}, we propose a relaxation for linear fractional programming and prove that our relaxation dominates existing relaxations in the literature. We also present computational experiments with $0\mathord{-}1$ fractional programs and a set that appears in the design of distillation trains.  
Concluding remarks are provided in Section~\ref{section:conclusion}. All missing proofs are included in Appendix~\ref{app:proofs}. 

\textbf{Notation}
For a positive integer $n$, let $[n]$ denote $\{1, \ldots, n \}$.  We use $\R$ to denote the set of real numbers, and $\R_+$ (resp. $\R_{++}$) to denote the set of non-negative (resp. positive) real numbers. For a set $S \subseteq \R^n$, $\conv(S)$ denotes the convex hull of $S$, $\rec(S)$ denotes the recession cone of $S$, \revision{$\ri(S)$ denote the relative interior of $S$}, and $\cl S$ denote the closure of $S$. For $\lambda \geq 0$, $\lambda S := \{\lambda x \mid x \in S\}$ if $\lambda >0$ and $\lambda S = \rec(S)$ if $\lambda = 0$. We will write $\frac{x}{\alpha^\intercal x}$, where $x\in\R^n$, to denote that each element of $x$ is divided by $\alpha^\intercal x$. For any $d$, we use $\mathbb{S}^d$ to denote the set of symmetric $d \times d$ matrices, $\mathbb{S}^d_+$ to denote the set of symmetric positive semidefinite matrices. We will use $X \succeq 0$ to denote $X \in \mathbb{S}^d_+$. 
For two $m \times n$ matrices $X$ and $Y$, its inner product is given by $\langle X, Y \rangle = \Tr(XY^\intercal) = \sum_{i \in [m]}\sum_{j \in [n]}X_{ij}Y_{ij}$.

\section{Simultaneous convexification of fractional terms}\label{section:mainresult}
Fractional programming has been studied extensively and has a diverse set of applications. A seminal result of \cite{charnes1962programming} shows how to optimize a linear fractional function over a polytope as long as the denominator is positive over this region. Formally, the result solves a class of linear fractional programs in the following form:
\begin{equation}\label{eq:basicfractionallp}
    {\min} \biggl\{\frac{b_0+b^\intercal x}{a_0+a^\intercal x} \biggm| Cx\le d,\ x\ge 0\biggr\},
\end{equation}
where $C\in \R^{r \times n}$, $d$ is an $r$ dimensional vector, $(a_0,a)$ is a vector in $\R \times \R^{n}$ such that $a_0 + a^\intercal x$ is positive over the feasible region, and $(b_0,b)$ is a vector in $\R \times\R^{n}$. The key step reduces \eqref{eq:basicfractionallp} to the following linear program which can be solved in polynomial time
\begin{equation*}
    {\min} \Bigl\{b_0\rho+b^\intercal y \Bigm| Cy\le d\rho,\ a_0\rho+a^\intercal y = 1,\ y \geq 0,\ \rho \ge 0\Bigr\}.
\end{equation*}
Another fundamental result in \cite{megiddo1978combinatorial} states that if for any $c\in \Z^n$ the problem 
\begin{equation}\label{eq:linearovercombinatorial}
    \min \bigl\{ c^\intercal x \bigm| x\in \X \subseteq \{0,1\}^n
    \bigr\}
\end{equation}
can be solved in $O\bigl(p(n)\bigr)$ comparisons and $O\bigl(q(n)\bigr)$ additions then, for $(a_0, a) \in \Z \times \Z^{n}$ and  $(b_0,b) \in \Z \times \Z^{n}$ such that $a_0+a^\intercal x$ is positive over $\X$, the problem 
\begin{equation}\label{eq:basicfractionalcomb}
    \min\biggl\{\frac{b_0+b^\intercal x}{a_0+a^\intercal x}\biggm| x\in \X\subseteq \{0,1\}^n\biggr\}
\end{equation}
can be solved in $O\left(p(n)\left(p(n)+q(n)\right)\right)$ time. These results enable polynomial-time solution procedures for a variety of fractional programming problems. Regardless, if either \eqref{eq:basicfractionallp} or \eqref{eq:basicfractionalcomb} is provided to a global optimization solver, the resulting relaxation does not exploit these tractability results, raising a question is whether the above results provide any insights for relaxation construction.

We remark that the polynomial-time algorithms do not imply that the convex hull of a fractional function over $\X$ can be separated in polynomial time. To see this, for any $c\in\R^n$,  $\min\bigl\{ c^\intercal x \bigm|  x\in \X =  \{0,1\}^n \bigr\}$ is easy to solve, as it attains optimum at $x_i = 1$ if $c_i < 0$ and $x_i=0$ otherwise. Yet, it is hard to separate the convex hull of the graph a fractional function over $\{0,1\}^n$, given as follows:
\[
\biggl\{\Bigl(x,\frac{b_0+b^\intercal x}{a_0+a^\intercal x}\Bigr) \biggm| x \in \{0,1\}^n\biggr\}.
\]
This is because, if the separation problem could be solved in polynomial time, we could use the ellipsoid algorithm to solve the following optimization problem in polynomial time, see  Corollary 2.9 in~\cite{tawarmalani2013explicit},
\[
\min \biggl\{\frac{b_0+b^\intercal x}{a_0+a^\intercal x} + c^\intercal x \biggm| x \in  \{0,1\}^n \biggr\}.
\]
However, the latter problem is \revision{NP-hard} as shown in~\cite{hansen1990boolean} (also, see the later discussion in Example~\ref{ex:frac-linear}).  Therefore, unless $\text{P}=\text{NP}$, the separation problem for the graph of a linear fractional function over $\{0,1\}^n$ is not solvable in polynomial time. Moreover, as \cite{matsui1996np} showed, it is also \revision{NP-hard} to solve the following optimization problem:
\begin{equation*}
\min_x \Bigl\{x_1 - \frac{1}{x_2} \Bigm| Cx\le d,\ x\ge 0\Bigr\},   
\end{equation*} 
which implies that it is \revision{NP-hard} to find a hyperplane separating an arbitrary point from the convex hull of a set involving a single fractional term, that is 
\begin{equation*}\label{eq:convhullhard}
\left\{\left(x,\frac{1}{x_2}\right) \;\middle|\; Cx\le d,\ x\ge 0\right\}.  
\end{equation*} 
It follows that Charnes-Cooper reformulation and Megiddo's algorithm do not convexify the fractional function, but only allow us to optimize this function, over the corresponding domain. We will derive a convexification result, in a transformed space, that recovers polynomial-time solvability of \eqref{eq:basicfractionallp} and \eqref{eq:basicfractionalcomb} under conditions described above. More importantly, this result yields insights into relaxations of fractional functions for general mixed-integer nonlinear programming problems. 

Our relaxation scheme introduces following variables to represent the fractional terms
\begin{equation*}
 \rho = \frac{1}{a_0+a^\intercal x} \quad \text{and} \quad y_i = \frac{x_i}{a_0+a^\intercal x} \quad \text{for }  i  \in [n].
\end{equation*}
Using these new variables, the problems \eqref{eq:basicfractionallp} and \eqref{eq:basicfractionalcomb} are rewritten as:
\begin{equation*}
 \min \Bigl\{ b_0\rho + b^\intercal y  \Bigm| x \in \X,\  (\rho,y)= \frac{(1,x)}{a_0+a^\intercal x}\Bigr\},
\end{equation*}
where, \eqref{eq:basicfractionallp} is modeled with $\X=\{x\mid Cx\le d, x\ge 0\}$, whereas, for \eqref{eq:basicfractionalcomb}, $\X$ is a subset of $\{0,1\}^n$ such that \eqref{eq:linearovercombinatorial} is easily solvable. Even though 
\[
\left\{\left(x,\frac{b_0+b^\intercal x}{a_0+a^\intercal x}\right)\;\middle|\; x\in \X\right\}
\]
cannot be easily separated over these domains, we will argue that convex hull of the following set can be separated in polynomial time:
\begin{equation}\label{eq:basicfractionalconv}
\Bigl\{ (\rho,y) \Bigm| x\in \X,\  (\rho, y) = \frac{(1,x)}{a_0+a^\intercal x}\Bigr\}.
\end{equation}
We highlight the main differences between these sets.
Specifically, the $x$ variables are projected out in \eqref{eq:basicfractionalconv} and one fractional function is replaced with many fractional terms.

\def \Rratio {\mathcal{R}}

The convex hull of \eqref{eq:basicfractionalconv} will follow from a more general fact that relates the convex hull of a set to that of its projective transform. Consider a transformation given as follows
\begin{equation}\label{eq:introducePhi}
\Phi(\s,x) = \left(\frac{1}{\s},\frac{x}{\s} \right) \quad \text{ for every } (\s, x) \in \R_{++} \times \R^n.
\end{equation}
To fix ideas, the transformation $\Phi$ maps the set~\eqref{eq:basicfractionalconv} to  $\bigl\{(a_0+a^\intercal x, x ) \mid x \in \X\bigr\}$. Theorem~\ref{thm:mainthem}, which we show below, allows us to use the convex hull of the transformed set to derive the convex hull of~\eqref{eq:basicfractionalconv}. 
For a subset $\S$ of $\R_{++} \times \R^n$, the image of $\S$ under $\Phi$ can be visualized as follows. First, we homogenize the set $\S$ with a scaling variable $\sigma$, and obtain a cone 
\[
\bigl\{(\sigma, \sigma\rho, \sigma x)  \bigm|  (\rho, x) \in \S,\ \rho >0,\ \sigma > 0 \bigr\}. 
\]
$\S$ can be viewed as the intersection of this cone with \{$\sigma=1$\} (after projecting out the first coordinate). Then, we consider a different cross-section of this cone,
\[
\bigl\{(\sigma, \sigma \rho, \sigma x) \bigm| (\rho,x) \in \S ,\ \rho >0,\ \sigma >0,\ \sigma\rho =1 \bigr\}. 
\]
Now, the image of $\S$ under $\Phi$ is obtained by projecting the new cross-section onto the space of the first and last coordinates.
In Figure~\ref{fig:projectivetransform}, we exemplify this transformation by showing that quadratic $x^2$ and the reciprocal $\frac{1}{x}$ as different sections of the same cone.

\begin{figure}[ht]
\begin{center}
\subcaptionbox{Slice: $\left\{(y,z)\;\middle|\; z\ge y^2, y\ge 0\right\}$\label{fig:quad}}{
    \includegraphics{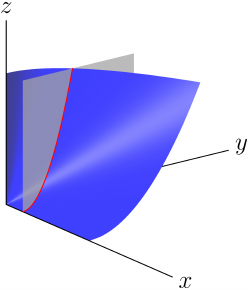}}
\subcaptionbox{Slice: $\left\{(x,z)\;\middle|\;z\ge \dfrac{1}{x}\right\}$\label{fig:inverse}}{
    \includegraphics{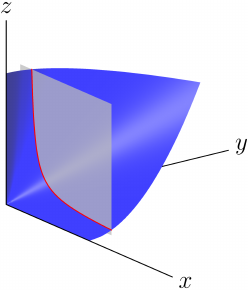}}
\end{center}
\caption{\revision{Two views of the cone $\left\{(x,y,z)\in \R^3_+\;\middle|\; y\le \sqrt{xz}\right\}$. When $x$-coordinate (resp. $y$-coordinate) is fixed, we obtain the slice shown in Figure\eqref{fig:quad} (resp. Figure\eqref{fig:inverse}). For $\lambda,\gamma,\theta\ge 0$, consider rays $\left(25\lambda,5\lambda,\lambda\right)$ and $\left(25\gamma,20\gamma,16\gamma\right)$ in the cone and the ray $\left(25\theta,8\theta,4\theta\right)$ that is in the convex cone generated by these rays. This observation leads to two convex representations of interest. Specifically, when $y$-coordinate (resp. $x$-coordinate) is scaled to $1$, we have $\left(\frac{25}{8},1,\frac{1}{2}\right) = \frac{1}{2} \left(5,1,\frac{1}{5}\right) + \frac{1}{2}\left(\frac{5}{4},1,\frac{4}{5}\right)$ (resp.
$\left(1,\frac{8}{25},\frac{4}{25}\right) = \frac{4}{5} \left(1,\frac{1}{5},\frac{1}{25}\right) + \frac{1}{5}\left(1,\frac{4}{5},\frac{16}{25}\right)$). The former is a combination of points on the curve $z=\frac{1}{x}$, while the latter is a convex combination of points on $z = y^2$.} \label{fig:projectivetransform}}
\end{figure}

\begin{theorem}\label{thm:mainthem}
    For a subset $\S$ of $\R_{++} \times \R^n $, we have
\[
\begin{aligned}
\conv(\S)  = \bigl\{(\rho, x)& \bigm| (1,x) \in \s  \conv\bigl(\Phi(\S)\bigr),\ \s >0 \bigr\} \\
\conv\bigl(\Phi(\S)\bigr) = \bigl\{(\t,y)& \bigm| (1,y) \in \t  \conv(\S),\ \t >0\bigr\}.
\end{aligned}
\]
\end{theorem}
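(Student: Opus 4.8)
The plan is to reduce both identities to the single equivalence
\[
(\rho,x)\in\conv(\S)\ \Longleftrightarrow\ \Phi(\rho,x)\in\conv\bigl(\Phi(\S)\bigr)\quad\text{for every }(\rho,x)\in\R_{++}\times\R^n,\qquad(\star)
\]
and to prove $(\star)$ directly. First I would record two facts. The map $\Phi$ is an involution on $\R_{++}\times\R^n$, since $\Phi\bigl(\Phi(\rho,x)\bigr)=(\rho,x)$; and both $\conv(\S)$ and $\conv\bigl(\Phi(\S)\bigr)$ are contained in $\R_{++}\times\R^n$, because a convex combination of points with positive first coordinate again has positive first coordinate. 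With these in hand, the right-hand side of the first identity equals $\bigl\{(\rho,x)\bigm|\Phi(\rho,x)\in\conv(\Phi(\S))\bigr\}$: indeed $(1,x)\in\rho\,\conv(\Phi(\S))$ is, after dividing by $\rho>0$, the same as $(1/\rho,x/\rho)=\Phi(\rho,x)\in\conv(\Phi(\S))$. Hence the first identity is exactly $(\star)$, and the second identity is $(\star)$ written for $\Phi(\S)$ in place of $\S$ (using $\Phi(\Phi(\S))=\S$). So it suffices to prove $(\star)$ for an arbitrary $\S\subseteq\R_{++}\times\R^n$.

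For the forward implication of $(\star)$ I would take a finite representation $(\rho,x)=\sum_i\lambda_i(\rho_i,x_i)$ with $\lambda_i>0$, $\sum_i\lambda_i=1$, and $(\rho_i,x_i)\in\S$, and exhibit $\Phi(\rho,x)$ as an explicit convex combination of the images $\Phi(\rho_i,x_i)=(1/\rho_i,\,x_i/\rho_i)\in\Phi(\S)$. The correct weights are $\mu_i=\lambda_i\rho_i/\rho$; these are positive, sum to $1$ because $\sum_i\lambda_i\rho_i=\rho$, and a one-line check shows $\sum_i\mu_i\Phi(\rho_i,x_i)=(1/\rho,\,x/\rho)=\Phi(\rho,x)$, the factor $\rho_i$ cancelling against the $1/\rho_i$ inside $\Phi$. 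Geometrically this reweighting is precisely the change of cross-section in the homogenizing cone $\bigl\{(\lambda,\lambda\rho,\lambda x)\bigm|(\rho,x)\in\S,\ \lambda>0\bigr\}$ discussed before the theorem: $\S$ is its slice at first coordinate $1$ while $\Phi(\S)$ is its slice at second coordinate $1$, and moving a point between the two slices rescales each conic generator by exactly the factor $\rho_i$.

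The reverse implication of $(\star)$ then comes for free from the involution: applying the forward implication to the set $\Phi(\S)$ shows that $\Phi(\rho,x)\in\conv(\Phi(\S))$ forces $\Phi\bigl(\Phi(\rho,x)\bigr)=(\rho,x)\in\conv\bigl(\Phi(\Phi(\S))\bigr)=\conv(\S)$. Combining the two implications yields $(\star)$, and with it both displayed identities. The step I would watch most carefully is the reweighting itself—checking that $\Phi$ transports convex combinations to convex combinations through the weights $\mu_i\propto\lambda_i\rho_i$ and that all first coordinates stay strictly positive so that $\Phi$ and the $\mu_i$ remain well defined. I expect no deeper obstacle: because the statement concerns $\conv$ rather than its closure, finite convex combinations suffice and no limiting argument (which might drive a first coordinate to $0$ and break the transform) is required.
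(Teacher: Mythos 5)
Your proof is correct. The computational heart of it---transporting a convex combination $\sum_i\lambda_i(\rho_i,x_i)$ through $\Phi$ by reweighting with $\mu_i=\lambda_i\rho_i/\rho$---is exactly the paper's device (the paper uses the same weights, written $\rho\lambda_j/\rho^j$, to move a representation in $\conv\bigl(\Phi(\S)\bigr)$ back to one in $\conv(\S)$), but your architecture differs in a useful way. The paper proves the inclusion of the right-hand-side set $R$ into $\conv(\S)$ by this reweighting, and then handles the reverse inclusion $\conv(\S)\subseteq R$ in two steps: it checks pointwise that $\S\subseteq R$, and then argues that $R$ is convex by exhibiting it as the projection of the intersection of the convex cone generated by $\conv\bigl(\Phi(\S)\bigr)$ with a hyperplane, citing Rockafellar. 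You avoid the convexity-of-$R$ argument entirely: you reformulate both displayed identities as the single pointwise equivalence $(\star)$, prove one implication by the reweighting, and obtain the converse for free by applying that same implication to $\Phi(\S)$ and invoking the involution $\Phi\circ\Phi=\mathrm{id}$ a second time. What your route buys is economy and self-containedness---full symmetry between the two statements and no appeal to cone-generation results; the step you flag as needing care (that $\conv(\S)$ and $\conv\bigl(\Phi(\S)\bigr)$ remain inside $\R_{++}\times\R^n$, so that $\Phi$ and the weights stay well defined) is indeed the only hypothesis the reduction to $(\star)$ needs, and your one-line justification of it is adequate. What the paper's route buys is the explicit identification of $R$ as a slice of the homogenizing cone of $\conv\bigl(\Phi(\S)\bigr)$; that cone viewpoint is reused later in the paper (for instance in the closure statement of Theorem~\ref{thm:fractionalmain}, where the $\rho=0$ slice becomes the recession cone), so its extra work is not wasted there, but as a proof of Theorem~\ref{thm:mainthem} alone your argument is leaner.
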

\begin{proof}
	First, notice that 
\[
\Phi\bigl(\Phi(\S)\bigr) =\Phi\left(\left\{\Bigl(\frac{1}{\rho}, \frac{x}{\rho}\Bigr) \Bigm| (\rho, x) \in \S \right\} \right) =  \S. 
\]
Thus, it suffices to prove the first equation.  Let $R$ denote the right-hand-side of the first equation. First, we show that $R \subseteq \conv(\S)$. Let $(\s, x) \in R$. Then, $( \frac{1}{\s},\frac{x}{\s}) \in \conv\bigl(\Phi(\S)\bigr)$, and thus there exist convex multipliers $\lambda$ and a set of points $(\s^j,x^j)$ of $\S$ such that 
\[
\Bigl(\frac{1}{\s},\frac{x}{\s} \Bigr) =  \sum_{j} \lambda_j \Bigl(\frac{1}{\s^j}, \frac{x^j}{\s^j}\Bigr).
\]
Thus,
\[
(\s,x,1) =   \s \Bigl(1,\frac{x}{\s} , \frac{1}{\s} \Bigr) =\s \sum_{j} \lambda_j\Bigl( 1,\frac{x^j}{\s^j}, \frac{1}{\s^j} \Bigr) =   \sum_{j} \frac{\s\lambda_j}{\s^j} (  \s^j, x^j, 1). 
\]
This shows that $(\s, x) \in \conv(\revision{\mathcal{S}})$ since $(\s^j,x^j) \in \S$, $\sum_j\frac{\s\lambda_j}{\s^j} = 1$ and $\frac{\s\lambda_j}{\s^j}\geq 0$. Therefore, $R \subseteq \conv(\S)$. To show the reverse containment $\conv(\S) \subseteq R$, we consider a point $(\s,x) \in \S$. It follows readily that 
\[
\Bigl(\frac{1}{\s},\frac{x}{\s}\Bigr) =  \Phi(\s,x) \in \Phi(\S).
\]
 This shows that $(1,x) \in \s \Phi(\S)$. Therefore, $(\s,x) \in R$, and thus $\S \subseteq R$. \revision{In addition, the relaxation $R$ is a convex set since it is the projection onto the space of $(\rho,x)$ variables of the following intersection
 \[
 \Bigl\{(\rho,\sigma',x) \Bigm| (\sigma',x) \in \rho \conv\bigl(\Phi(\mathcal{S})\bigr),\ \rho > 0  \Bigr\} \cap  \bigl\{(\rho,\sigma',x) \bigm| \sigma'=1 \bigr\},
 \]
where the former set is the smallest convex cone that includes $\conv\bigl(\phi(\mathcal{S})\bigr)$, see Corollary 2.6.3 in~\cite{rockafellar2015convex}.} Hence, $\conv(\S) \subseteq R$ as $R$ is convex.  \Halmos
\end{proof}
\begin{remark}\label{rmk:eq-sep}
   In this remark, we show a polynomial-time equivalence of separating $\conv(\S)$ and $\conv\bigl(\Phi(\S)\bigr)$ from arbitrary points. This, together with the polynomial-time equivalence of separation and optimization \cite{grotschel2012geometric}, implies a polynomial time equivalence of optimization over the two convex hulls. Let $(\bar{\rho}, \bar{x}) \in \R_{++} \times \R^{n}$. We will use the separation oracle of $\conv\bigl(\Phi(\S)\bigr)$ to separate $(\bar{\rho}, \bar{x})$ from $\conv(\S)$. If $(\bar{\sigma}, \bar{y}):=\Bigl(\frac{1}{\bar{\rho}},\frac{\bar{x}}{\bar{\rho}}\Bigr) \in \conv\bigl(\Phi(\S)\bigr)$, then, by Theorem~\ref{thm:mainthem}, we conclude $(\bar{\rho}, \bar{x}) \in \conv(\S)$. Otherwise, the separation oracle returns a linear inequality $\bigl\langle \alpha, (\sigma, y) \bigr\rangle + \beta \leq 0$ to separate $(\bar{\sigma}, \bar{y})$ from $\conv(\Phi(\S))$. Then, by Theorem~\ref{thm:mainthem}, the inequality $\bigl\langle \alpha, (1, x) \bigr\rangle + \beta\cdot \rho\leq 0$  separates $(\bar{\rho}, \bar{x})$  from $\conv(\S)$. Conversely, using Theorem~\ref{thm:mainthem} and a separation oracle of $\conv(\S)$, we can devise a separation algorithm for $\conv\bigl(\Phi(\S)\bigr)$. 
    \Halmos
\end{remark}







Next, we present an application of Theorem~\ref{thm:mainthem} that, under certain conditions, relates the convex hull of fractional functions sharing a denominator to that of a set involving just the numerators. We show that convexifying these two sets, denoted $\G$ and $\F$ respectively, is polynomially equivalent, so that a formulation or separation procedure for one yields that for the other. 
Consider a vector of base functions $f:\X \subseteq \R^n \to \R^m$ denoted as $f(x)=\bigl(f_1(x),\ldots,f_m(x)\bigr)$ and another vector of functions obtained from the base functions by dividing each of them with a linear form of $f$, {\it i.e.}, for $\alpha \in \R^{m}$, we consider
\begin{equation}\label{eq:defineFG}
    \F=\biggl\{f(x)\biggm| x\in \X \biggr\} \quad \text{and} \quad \G = \biggl\{\frac{f(x)}{\sum_{i \in [m]} \alpha_i f_i(x)}\biggm| x \in \X\biggr\}.  
\end{equation}
\revision{In Theorem~\ref{thm:fractionalmain}, we will show that, under certain conditions, the convex hull of one set can be described using that of the other as follows:}
\begin{subequations}\label{eq:eqFG}
    \begin{alignat}{3}
        \conv(\G) &= \bigl\{g \in \R^m \bigm| \revision{\exists \rho \geq 0 \st{} } g \in \rho\conv(\F),\  \alpha^\intercal g  = 1\bigr\}\label{eq:FprimeConv} \\
        \conv(\F) &= \bigl\{f \in \R^m \bigm| \revision{\exists \sigma \geq 0 \st{} } f\in \sigma\conv(\G),\ f_1=1 \bigr\}.\label{eq:FConv}
    \end{alignat}
\end{subequations}    
\revision{One of the conditions  for \eqref{eq:FprimeConv} to hold is the boundedness of $\mathcal{F}$.  The next example shows that such condition is required.} 
\begin{example}\label{example:boundedness}
    Consider a discrete set defined as
     \[
     \G = \biggl\{\frac{(1,x)}{1+x} \biggm|  x \geq 0,\ x \in \Z  \biggr\}.
     \]
Let $\F= \bigl\{(1,x) \bigm| x \geq 0,\ x \in \Z \bigr\}$ and notice that the convex hull of $\F$ is its continuous relaxation. For this setting, \revision{Theorem~\ref{thm:fractionalmain} is not applicable since $\F$ is not bounded. In fact, the equality in~\eqref{eq:FprimeConv} does not hold} since the right hand side of \eqref{eq:FprimeConv} reduces to \revision{$\bigl\{ g \bigm| \exists \rho \geq 0 \st 1 = g_1 + g_2,\ g_1 = \rho, g_2 \geq 0 \bigr\}$}, and consists of $(0,1)$, which does not belong to $\conv(\G)$ because $\frac{1}{1+x} > 0$ for all $x\in \G$. \Halmos
\end{example}
Example~\ref{example:boundedness} shows that the right hand side of \eqref{eq:FprimeConv} can include points not in $\conv(\G)$.  Assume that $\F$ is a polyhedron. These extra points arise because, when $\F$ is homogenized, with the homogenizing variable set to $0$, we obtain the recession cone of the polyhedron instead of a set containing just the origin. However, we can show that these additional points are typically included in $\cl\conv(\G)$.


\begin{theorem}\label{thm:fractionalmain}
  \revision{Let $\F$ and $\G$ be nonempty sets defined as in \eqref{eq:defineFG}, and assume that there exists an $\epsilon > 0$ such that  $\sum_{i \in [m]}\alpha_if_i(x) > \epsilon$ for all $x\in \X \subseteq \R^n$. If $\F$ is bounded,~\eqref{eq:FprimeConv} holds and if, in addition, $f_1(x) = 1$ then~\eqref{eq:FConv} holds. Moreover, 
        \begin{equation}\label{eq:FprimeClConv}
    \begin{aligned}
        \cl\conv(\G) &= \bigl\{g \in \R^m \bigm| \exists \rho \geq 0 \st g \in \rho\cl\conv(\F),\  \alpha^\intercal g  = 1 \bigr\},
    \end{aligned}
    \end{equation}
where  $0\cl\conv(\F)$ denote the recession cone of $\cl\conv(\F)$.}
\end{theorem}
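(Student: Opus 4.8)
The plan is to deduce \eqref{eq:FprimeConv} directly from Theorem~\ref{thm:mainthem} via a judicious choice of $\S$, to obtain \eqref{eq:FConv} by a symmetry argument, and to establish the closure identity \eqref{eq:FprimeClConv} by a direct two–inclusion argument in which the uniform lower bound $\alpha^\intercal f>\epsilon$ does the essential work. For \eqref{eq:FprimeConv}, I would set $\S=\bigl\{(\alpha^\intercal f(x),f(x))\bigm| x\in\X\bigr\}$; the hypothesis $\alpha^\intercal f>\epsilon>0$ is exactly what guarantees $\S\subseteq\R_{++}\times\R^m$, so Theorem~\ref{thm:mainthem} applies. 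Since the leading coordinate is a linear function of the remaining ones, $\conv(\S)=\bigl\{(\alpha^\intercal\bar f,\bar f)\bigm|\bar f\in\conv(\F)\bigr\}$, while $\Phi(\S)=\bigl\{(1/\alpha^\intercal f,\,g)\bigm| g\in\G\bigr\}$ projects onto $\G$ in its last $m$ coordinates. Substituting the explicit form of $\conv(\S)$ into the second identity of Theorem~\ref{thm:mainthem} gives $\conv\bigl(\Phi(\S)\bigr)=\bigl\{(\sigma,y)\bigm|\sigma>0,\ y\in\sigma\conv(\F),\ \alpha^\intercal y=1\bigr\}$. Because projection commutes with the convex hull, projecting out the leading coordinate yields $\conv(\G)=\bigl\{y\bigm|\exists\sigma>0,\ y\in\sigma\conv(\F),\ \alpha^\intercal y=1\bigr\}$; boundedness of $\F$ forces $\rec(\conv(\F))=\{0\}$, so the $\rho=0$ branch on the right of \eqref{eq:FprimeConv} would require $\alpha^\intercal 0=1$ and is therefore empty, allowing $\sigma>0$ to be relaxed to $\rho\ge0$.

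To get \eqref{eq:FConv} I would exploit that, under $f_1\equiv1$, one has $g_1=1/(\alpha^\intercal f)$ and hence $f=g/g_1$, so $\F=\bigl\{g/(e_1^\intercal g)\bigm| g\in\G\bigr\}$, where $e_1$ is the first standard basis vector. Thus \eqref{eq:FConv} is precisely the already–proved statement \eqref{eq:FprimeConv} applied to the pair $(\G,e_1)$ with the roles of $\F$ and $\G$ interchanged. I only need to verify its hypotheses for this new pair: $\G$ is bounded because $\|g\|\le\sup_\X\|f\|/\epsilon<\infty$, and $e_1^\intercal g=1/(\alpha^\intercal f)\ge1/M>0$ with $M:=\sup_\X\alpha^\intercal f<\infty$ by boundedness of $\F$.

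For the closure identity \eqref{eq:FprimeClConv}, let $\mathcal{R}$ denote its right-hand side and $C:=\cl\conv(\F)$. For $\mathcal{R}\supseteq\cl\conv(\G)$ I would first rewrite any $g=\sum_j\lambda_j f(x_j)/(\alpha^\intercal f(x_j))\in\conv(\G)$ as $g=\rho\bar f$ with $\rho=\sum_j\lambda_j/(\alpha^\intercal f(x_j))>0$, $\bar f=\sum_j(\lambda_j/(\rho\,\alpha^\intercal f(x_j)))f(x_j)\in\conv(\F)$, and $\alpha^\intercal g=1$, so $\conv(\G)\subseteq\mathcal{R}$; it then suffices to show $\mathcal{R}$ is closed. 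This is where the $\epsilon$-bound is indispensable: writing $\mathcal{R}$ as the image, under the projection forgetting $\rho$, of the intersection of the homogenization $\hat C=\bigl\{(\rho,g)\bigm|\rho>0,\ g/\rho\in C\bigr\}\cup\bigl(\{0\}\times\rec(C)\bigr)$ of $C$ (closed by, e.g., \cite{rockafellar2015convex}) with the hyperplane $\{\alpha^\intercal g=1\}$, the estimate $\alpha^\intercal g=\rho\,\alpha^\intercal(g/\rho)\ge\rho\epsilon$ valid on $\hat C$ forces $0\le\rho\le1/\epsilon$ on that intersection. A projection that forgets a coordinate confined to a compact interval maps closed sets to closed sets, so $\mathcal{R}$ is closed and contains $\cl\conv(\G)$.

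The reverse inclusion $\mathcal{R}\subseteq\cl\conv(\G)$ splits along the two branches of $\rho$, and this is where I expect the main obstacle. The map $h\mapsto h/(\alpha^\intercal h)$ sends $\conv(\F)$ into $\conv(\G)$ (the same weight reallocation as in the forward direction) and, being continuous on $\{\alpha^\intercal h\ge\epsilon\}$, extends to send $C=\cl\conv(\F)$ into $\cl\conv(\G)$. When $\rho>0$, a point $g=\rho\bar f\in\mathcal{R}$ with $\alpha^\intercal g=1$ satisfies $\rho=1/(\alpha^\intercal\bar f)$, hence $g=\bar f/(\alpha^\intercal\bar f)\in\cl\conv(\G)$. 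The genuinely delicate case is $\rho=0$, namely $g\in\rec(C)$ with $\alpha^\intercal g=1$ — exactly the extra points illustrated in Example~\ref{example:boundedness}. Fixing any $\bar f_0\in\conv(\F)$, the ray $\bar f_0+tg$ stays in $C$, so $\gamma(t):=(\bar f_0+tg)/(\alpha^\intercal\bar f_0+t)\in\cl\conv(\G)$; since $\alpha^\intercal g=1$, letting $t\to\infty$ gives $\gamma(t)\to g$ and thus $g\in\cl\conv(\G)$. Both the recession branch and the closedness argument hinge on controlling $\alpha^\intercal(\cdot)$ under homogenization, which is precisely why the uniform bound $\alpha^\intercal f>\epsilon$, rather than mere positivity, is assumed.
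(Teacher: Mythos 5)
Your proof is correct, and its skeleton coincides with the paper's: both establish \eqref{eq:FprimeConv} by applying Theorem~\ref{thm:mainthem} to the lift $\S=\bigl\{(\alpha^\intercal f,f)\bigm| f\in\F\bigr\}$ and use boundedness of $\F$ to empty the $\rho=0$ branch, and both prove \eqref{eq:FprimeClConv} by the same two inclusions, with a recession-ray limit $(\bar f_0+tg)/(\alpha^\intercal \bar f_0+t)\to g$ handling the points with $\rho=0$. You deviate in three places, each a little tidier than the paper. First, you obtain \eqref{eq:FConv} by re-applying the already-proved \eqref{eq:FprimeConv} to the pair $(\G,e_1)$, using $f_1\equiv 1$ to recognize $\F$ as the fractional set of $\G$; the paper instead invokes Theorem~\ref{thm:mainthem} a second time on $\conv(\S)$ via $\mathcal{T}=\Phi(\S)$. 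Your reduction needs the hypotheses verified for the new pair ($\G$ bounded, $e_1^\intercal g$ uniformly positive), which you do via $\|g\|\le\sup\|f\|/\epsilon$ and $\alpha^\intercal f\le M<\infty$; this buys a shorter argument and makes the involution symmetry explicit rather than implicit. Second, for closedness of the right-hand side of \eqref{eq:FprimeClConv} the paper cites Theorem 9.6 of Rockafellar, while you give a self-contained argument: the $\epsilon$-bound forces $\rho\le 1/\epsilon$ on the homogenized set intersected with $\{\alpha^\intercal g=1\}$, and projecting out a coordinate confined to a compact interval preserves closedness. Third, in the $\rho=0$ case the paper anchors at $f\in\ri\bigl(\conv(\F)\bigr)$ so that $f+\beta g$ remains in $\conv(\F)$ (Corollary 8.3.1 of Rockafellar), whereas you anchor at an arbitrary $\bar f_0\in\conv(\F)$ and let the ray live in $\cl\conv(\F)$, exploiting that the map $h\mapsto h/(\alpha^\intercal h)$ was already shown to carry $\cl\conv(\F)$ into $\cl\conv(\G)$; this avoids the relative interior entirely. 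All verifications you supply (the reweighting identity, continuity of the normalization map on $\{\alpha^\intercal h\ge\epsilon\}$, and the hypothesis checks for the swapped pair) are sound, so the proposal stands as a complete proof.
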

\begin{proof}
\revision{Here, we focus on proving the correctness of~\eqref{eq:eqFG}.   The correctness of~\eqref{eq:FprimeClConv} is shown in Appendix~\ref{app:closure}. Consider a lift of $\F$ and $\G$, defined as follows, respectively,
 \begin{equation}\label{eq:lift}
 \S := \Bigl\{( \alpha^\intercal f ,f)\Bigm| f\in \F \Bigr\} \quad \text{and} \quad \mathcal{T}:= \Phi(\S) = \Bigl\{(\rho,g):= \frac{(1,f)}{ \alpha^\intercal f} \Bigm| f\in \F  \Bigr\}.
 \end{equation}
We will invoke Theorem~\ref{thm:mainthem} to obtain a convex hull expression of $\S$ (resp. $\mathcal{T}$), which, after projection, yields the equality in~\eqref{eq:FprimeConv} (resp.~\eqref{eq:FConv}). }

\revision{
First, we establish the equality in~\eqref{eq:FprimeConv}. Since $\G$ is the projection of $\mathcal{T}$ onto the space of $g$ variable and convexification commutes with projection, it suffices to derive the convex hull of $\mathcal{T}$, which is obtained as follows:
\begin{equation*}
\begin{aligned}
\conv\bigl(\mathcal{T}\bigr)& = \bigl\{(\rho, g)\bigm| (1,g)\in \rho \conv(\S),\ \rho > 0 \bigr\}  \\
 &= \bigl\{(\rho,g) \bigm| g \in \rho \conv(\mathcal{F}),\ \alpha^\intercal g = 1,\ \rho >0 \bigr\} \\
 & = \bigl\{(\rho,g) \bigm| g \in \rho \conv(\mathcal{F}),\ \alpha^\intercal g = 1,\ \rho \geq 0 \bigr\},
\end{aligned}
\end{equation*}
where the first equality holds due to Theorem~\ref{thm:mainthem} and $\mathcal{S} = \Phi(\mathcal{T})$. The second equality is derived using $\conv(\S) = \bigl\{(\alpha^\intercal f, f)\bigm| f\in \conv(\F)\bigr\}$, where the equality hods since $\S$ is obtained as a linear transformation of $\F$. The last equality holds since if the rhs contains  a point $(\rho,g)$ with $\rho = 0$,  i.e. $g\in 0\conv(\F)$ then the boundedness of $\F$ implies $g=0$, contradicting to the equality constraint $\alpha^\intercal g=1$.}

\revision{
Then, we establish the equality in~\eqref{eq:FConv}. Similarly, since $\F$ is the projection of $\mathcal{S}$, it suffices to derive the convex hull of $\mathcal{S}$,
\begin{equation*}
\begin{aligned}
	\conv\bigl(\S\bigr)& = \bigl\{(\sigma, f)\bigm| (1,f)\in \sigma \conv( \mathcal{T} ),\ \sigma > 0 \bigr\} \\
 & =  \bigl\{(\sigma, f)\bigm|  f\in \sigma\conv(\G),\ f_1=1,\ \sigma > 0\bigr\}\\
 & =  \bigl\{(\sigma, f)\bigm|  f\in \sigma\conv(\G),\ f_1=1,\ \sigma \geq 0\bigr\},
\end{aligned}    
\end{equation*}
where the first equality holds due to Theorem~\ref{thm:mainthem} and $\mathcal{T} = \Phi(\mathcal{S})$. The second equality holds since, under the assumption that $f_1(x)=1$, $\conv(\mathcal{T}) = \bigl\{(\rho,g) \bigm| \rho = g_1,\ g \in \conv(\G) \bigr\}$. To see the last equality, we observe that $ \conv(\mathcal{G})$ is bounded since $\sum_i\alpha_if_i(x) > \epsilon$ and the boundedness of $\F$ implies that $\mathcal{G}$ is bounded.} \Halmos
\end{proof}

It is useful to interpret~\eqref{eq:FprimeConv} as arising from a two-step procedure where we first homgenize $\conv(\F)$ and then intersect the resulting cone with $\alpha^\intercal g = 1$.
To illustrate the relevance of Theorem~\ref{thm:fractionalmain} in convexifying fractional programs, we revisit the result of \cite{megiddo1978combinatorial}. 
\begin{example}\label{ex:megiddo}
We describe a polynomial time algorithm to solve \eqref{eq:basicfractionalcomb} using a polynomial-time algorithm to optimize linear functions over $\X$. Using the equivalence of separation and optimization, 
\eqref{eq:basicfractionalcomb} can be solved in polynomial time if  $\conv(\G)$ can be separated in polynomial time, where 
\[
\G=\Bigl\{\frac{(1,x)}{a_0+a^\intercal x}\Bigm| x\in \X \Bigr\}.
\]
As we showed in Theorem~\ref{thm:fractionalmain}, the convex hull of $\G$ can be derived from that of $\X$. We use $(\rho, y)$ to denote $g$. The use of $\rho$ is reasonable because the first coordinate is $1$ in $\conv(\F)$ which, by $g\in \rho \conv(\F)$, implies that $g_1 = \rho$. Specifically, the separation problem of $\conv(\G)$ is solved as follows. If a point $(\bar{\rho},\bar{y})\not\in\conv(\G)$ then $\bar{\rho} > 0$ and, therefore, $\frac{\bar{y}}{\bar{\rho}}\not\in \conv(\X)$. Assume $\alpha^\intercal x \le b$ is a valid inequality for $\conv(\X)$ that $\frac{\bar{y}}{\bar{\rho}}$ does not satisfy. Then, $ \alpha^\intercal \bar{y} -b\bar{\rho} > 0$ while $ \alpha^\intercal y -b\rho  \le 0$ for each $(\rho,y)\in \conv(\G)$. \Halmos


\end{example}
As shown in \cite{hansen1990boolean}, minimizing the sum of a linear fractional function and a linear function over $\{0,1\}^n$ is \revision{NP-hard}. We relate this hardness result to 
Theorem~\ref{thm:fractionalmain} in the next example. 
\begin{example}\label{ex:frac-linear}
Consider a linear fractional problem defined as 
\begin{equation}\label{eq:frac-linear}
\min  \biggl\{ \frac{b_0 + b^\intercal x}{a_0+a^\intercal x} + c^\intercal x \biggm| x \in \{0,1\}^n \biggr\},  
\end{equation}
where $(a_0,a) \in \Z \times \Z^n$, $(b_0,b) \in \Z \times \Z^n$, and $c \in \Z^n$. We show that \eqref{eq:frac-linear} is \revision{NP-hard under Turing reductions}.  By the equivalence of separation and optimization, \eqref{eq:frac-linear} is polynomial time solvable for all $(b_0,b)$ if and only if the separation of the convex hull of $\G$ is polynomial time solvable, where 
\[
\G=\left\{\left(\frac{(1,x)}{1+a^\intercal x}, x \right)\;\middle|\; x\in \{0,1\}^n \right\}.
\]
By Remark~\ref{rmk:eq-sep} and Theorem~\ref{thm:fractionalmain}, if $\conv(\G)$ has a polynomial time separation algorithm, so does $\conv(\F)$,  where
    \[
\F =  \left\{ \left(1,x, x\bigl(1+a^\intercal x\bigr)\right) \;\middle|\; x \in \{0,1\}^n \right\}.
    \]
By the ellipsoid algorithm, the following problem that minimizes a linear function with integer coefficients over $\F$ is polynomial time solvable:
 \begin{equation}\label{eq:bilinear-opt}
    \min \bigl\{ \alpha +   \beta^\intercal x + (\gamma^\intercal x) (1 + a^\intercal x) \bigm| x \in \{0,1\}^n \bigr\},
\end{equation}
where $\alpha \in \Z$, and $\beta$ and $\gamma$ are vectors in $\Z^n$. 
However, \eqref{eq:bilinear-opt} is \revision{NP-hard} since the subset-sum problem can be polynomially reduced to it as follows. Consider $w\in \Z^n_+$ and $K\in \Z_+$. The subset sum is to decide if there exists an $x \in \{0,1\}^n$ such that $w^\intercal x = K$. Let $a = w$, $\alpha = 0$, $\beta = (-2K-1)\cdot w$ and  $\gamma = w$. Now, observe that
\[
\begin{aligned}
\alpha + \beta^\intercal x+ \gamma^\intercal x(1+a^\intercal x) = (w^\intercal x)\cdot(w^\intercal x) - 2K \cdot w^\intercal x = (w^\intercal x-K)^2 - K^2,
\end{aligned}
\]
which is $-K^2$ if and only if there exists an $x\in \{0,1\}^n$ satisfying $w^\intercal x = K$. \Halmos
\end{example}

\section{Connections between zero-one fractional and quadratic programming}
\label{section:BQP}
\def \F {\mathcal{F}}
Let $G=(V,E)$ be a graph, with $V$ and $E$ as the set of nodes and edges respectively, that has no self-loops.
The boolean quadric polytope (\textsf{BQP}) associated with the graph $G$, denoted as $\QP_G$,  is defined as the convex hull of
\[
\Bigl\{ (x,z) \in \R^{|V| + |E|}\Bigm| z_{ij} = x_ix_j \text{ for } (i,j) \in E,\ x \in \{0,1\}^{\vert V\vert} \Bigr\}.
\]
In particular, we denote by $\QP_G$ as $\QP$ if $G$ is a complete graph.  This polytope was introduced in~\cite{padberg1989boolean}, and 
its polyhedral structure and related algorithmic problems have been studied extensively~\cite{boros1992chvatal,boros1993cut,deza1997geometry,bonami2018globally,junger2021exact}. \textsf{BQP} is related to the cut polytope~\cite{barahona1986cut} via a bijective linear transformation~\cite{de1990cut}. 
We show that valid inequalities and convex hull characterizations in special cases of BQP and/or cut-polytope lead to improved relaxations for 0-1 fractional programs. 

We present several \textsf{BQP} inequalities in a homogenized form using an additional variable $\rho$ to make them easier to use later~\cite{padberg1989boolean}. The McCormick inequalities~\cite{mccormick1976computability} for $i, j \in [n]$ with $i\neq j$ are as follows:
\begin{equation}\label{eq:mc}
   \begin{aligned}
   -z_{ij} \leq 0,\ -z_{ij} + x_i+x_j \leq \rho ,\     z_{ij} - x_i \leq 0,\ z_{ij} - x_j \leq 0,
   \end{aligned} \tag{\textsc{McCormick}}
\end{equation}
the triangle inequalities for  $i,j,k \in [n]$ are: 
\begin{equation}\label{eq:tri}
    \tag{\textsc{Triangle}}
    \begin{aligned}
     x_i+x_j+x_k-z_{ij}-z_{jk}-z_{ik} &\leq \rho \\
     -x_i + z_{ij} + z_{ik} - z_{jk} &\leq 0 \\
     -x_j + z_{ij} - z_{ik} + z_{jk} &\leq 0 \\
     -x_k - z_{ij} + z_{ik} + z_{jk} &\leq 0, \\
\end{aligned}
\end{equation}
and the odd-cycle inequalities are described next.
Consider a cycle $C \subseteq E$ and a subset $D \subseteq C$ that has an odd number of elements. Let $S_0 = \{u \in V \mid \exists e,f \in D, e \neq f, \text{ s.t. } e\cap f = u \}$ and $S_1 = \{u \in V \mid \exists e, f \in C\setminus D, e\ne f, \text{ s.t. } e\cap f = u \}$, {\it i.e.}, $S_0$ (resp. $S_1$) is the set of nodes with both (resp. none) of the adjacent edges in $D$. Then, the odd-cycle inequality is 
\begin{equation}\label{eq:oddcycle}
  \sum_{i\in S_0}x_i - \sum_{i\in S_1}x_i + \sum_{(i,j) \in C\setminus D }z_{ij} - \sum_{(i,j) \in D} z_{ij} \leq \frac{\vert D \vert -1}{2}\rho. \tag{\textsc{Odd-Cycle}}
\end{equation}
Odd-cycle inequalities can be separated in polynomial-time \cite{de1990cut,barahona1986cut}.  

Besides the explicit inequalities in the space of \textsf{BQP} variables, we will also exploit the reformulation-linearization technique (RLT) to relax \textsf{BQP}~\cite{sherali1990hierarchy} by introducing new variables that represent monomials of higher degrees. Specifically, given a positive integer  $2 \leq k \leq \vert V \vert$, for each subset $e$ of $V$ of cardinality 
from $2$ to $k$, we represent the monomial $\prod_{i \in e} x_i$ with a variable $z_e$. Then, the $k^{\text{th}}$ RLT relaxation of \textsf{BQP}, denoted as $\RLT_k$, is the system of linear inequalities obtained from the following system of polynomial inequalities by using relation $x_i = x_i^2$  and replacing each monomial with its linearization\footnote{Observe that unlike the standard notation, our usage of $\text{RLT}_k$ linearizes degree $k+1$ monomials.}:
\begin{equation*}\label{eq:RLT}
\prod_{i \in S}  x_i \prod_{j \in T} (1-x_j) \geq 0 \quad \text{ for all } S \cap T = \emptyset \text{ and } \vert S \cup  T\vert = k+1. 
\end{equation*}
\begin{lemma}[\cite{sherali1990hierarchy}]\label{lemma:rlt}
    For a given $d \in \Z_+$ with $d \geq 2$, the convex hull of $\bigl\{(x,z)\mid x \in \{0,1\}^d,\ z_e=\prod_{i\in e} x_i\; \forall e \subseteq [d] \text{ s.t. } \vert e \vert \geq 2\bigr\}$ is given by $\RLT_{d-1}$. 
\end{lemma}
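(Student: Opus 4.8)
The plan is to recognize $\RLT_{d-1}$ as the image of a standard simplex under an invertible, Möbius-type linear change of variables, so that the theorem collapses to the elementary fact that a simplex equals the convex hull of its vertices.

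First I would set up uniform coordinates. I augment the $z$-variables by a constant coordinate $z_\emptyset := 1$ and write $z_{\{i\}} := x_i$, so that a single symbol $z_A$, indexed by every subset $A \subseteq [d]$, records the monomial $\prod_{i \in A} x_i$. Taking $k = d-1$, the defining products of $\RLT_{d-1}$ are exactly $\prod_{i \in S} x_i \prod_{j \in [d]\setminus S}(1-x_j) \ge 0$ as $S$ ranges over all subsets of $[d]$ (here $T = [d]\setminus S$, because $|S \cup T| = d$ with $S \cap T = \emptyset$ forces $S,T$ to partition $[d]$). Expanding $\prod_{j \in [d]\setminus S}(1-x_j)$ and using $x_i^2 = x_i$ to linearize, each such product becomes the linear form
\[
P_S(z) := \sum_{U \subseteq [d]\setminus S} (-1)^{|U|}\, z_{S \cup U},
\]
so that $\RLT_{d-1}$ is the polytope $\bigl\{z \bigm| z_\emptyset = 1,\ P_S(z) \ge 0\ \forall S \subseteq [d]\bigr\}$ in the $z_A$ (with $A \neq \emptyset$) variables.

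Second, I would analyze the linear map $\Theta : z \mapsto \bigl(P_S(z)\bigr)_{S \subseteq [d]}$. The key computation is to evaluate $\Theta$ at the feasible integer points. Writing a vertex as $x = \mathbf{1}_V$ for $V \subseteq [d]$ gives $z_A = [A \subseteq V]$, and a short inclusion–exclusion shows $P_S(\mathbf{1}_V) = [S = V]$: if $S \not\subseteq V$ every term vanishes, and if $S \subseteq V$ the sum reduces to $\sum_{U \subseteq V \setminus S}(-1)^{|U|} = [V = S]$. Hence each $P_S$ acts as the barycentric coordinate attached to the vertex indexed by $S$. Summing over $S$, I would verify $\sum_S P_S(z) = z_\emptyset$, so the normalization $z_\emptyset = 1$ is equivalent to $\sum_S P_S(z) = 1$. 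Together these facts show that $\Theta$ is a linear bijection from the hyperplane $\{z_\emptyset = 1\}$ onto $\{p \mid \sum_S p_S = 1\}$, carrying $\mathbf{1}_V$ to the unit vector $e_V$, with inverse given by the zeta transform $z_A = \sum_{V \supseteq A} p_V$.

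Finally, I would transport the description across $\Theta$. In the $p = (p_S)$ coordinates the system $\{z_\emptyset = 1,\ P_S(z) \ge 0\}$ is exactly the standard simplex $\Delta = \{p \mid p_S \ge 0,\ \sum_S p_S = 1\}$, whose vertices are the $e_V$. Since $\Theta$ is a linear bijection it commutes with taking convex hulls, so $\Delta = \conv\{e_V \mid V \subseteq [d]\}$ pulls back to $\RLT_{d-1} = \conv\bigl\{z \bigm| z_A = [A \subseteq V],\ V \subseteq [d]\bigr\}$, which is precisely the claimed convex hull. I expect the only genuine work to be the evaluation identity $P_S(\mathbf{1}_V) = [S = V]$ together with the invertibility of $\Theta$; both are instances of Möbius inversion on the Boolean lattice, and the remaining steps are purely formal. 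The one subtlety worth checking is that the inverse returns $z$-vectors consistent with \emph{all} monomial coordinates, not merely the singletons, but this is immediate from $z_A = \sum_{V \supseteq A} p_V$ being exactly the convex combination $\sum_V p_V\, z^{(V)}$ of the integer points.
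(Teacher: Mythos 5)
Your proposal is correct. The paper itself gives no proof of this lemma --- it is imported verbatim from Sherali and Adams, so there is nothing internal to compare against; but your argument is a complete, self-contained proof and is essentially the classical one underlying the cited result. The three computations you flag as the real content all check out: the level-$(d-1)$ products linearize to the M\"obius transform $P_S(z)=\sum_{W\supseteq S}(-1)^{|W|-|S|}z_W$ (no use of $x_i^2=x_i$ is even needed, since $S$ and $U$ are disjoint); $P_S(\mathbf{1}_V)=[S=V]$ by inclusion--exclusion; and $\sum_S P_S(z)=z_\emptyset$, so the normalization turns the feasible set into the standard simplex in the $p$-coordinates. The inverse zeta transform $z_A=\sum_{V\supseteq A}p_V$ then exhibits every feasible $z$ as the convex combination $\sum_V p_V z^{(V)}$ of the $0$--$1$ points, and the reverse inclusion $\conv\{z^{(V)}\}\subseteq\RLT_{d-1}$ is immediate since each $z^{(V)}$ satisfies $P_S(z^{(V)})=[S=V]\ge 0$ and $\RLT_{d-1}$ is convex. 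One small remark: your identification of the constraint index set uses that $|S\cup T|=d$ with $S\cap T=\emptyset$ forces $T=[d]\setminus S$, which matches the paper's convention that $\RLT_k$ linearizes degree-$(k+1)$ products; this is exactly why the lemma is stated at level $d-1$ rather than $d$, and your proof is consistent with that convention.
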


\subsection{Zero-one linear fractional programs and \textsf{BQP}}\label{section:linear_frac_poly}
In this subsection, we focus on the problem of minimizing the sum of a linear fractional function and a linear function with binary variables
\begin{equation}\label{eq:frac+linear}
    \min \biggl\{\frac{ b_0 + b^\intercal x }{a_0 + a^\intercal x } + c^\intercal x \biggm| x \in \{0,1\}^n \biggr\},
\end{equation}
where $(a_0,a)$ and $(b_0,b)$ are vectors in $\R \times \R^n$, $c$ is a vector in $\R^n$, and the $a_0 + a^\intercal x$ is assumed to be positive on $\{0,1\}^n$.  \revision{For a subset $I$ of $[n]$, consider the linear fractional zero-one set defined as:
\begin{equation*}
\begin{aligned}
\G_{I} := \biggl\{ \Bigl(\frac{(1,x)}{a_0 + a^\intercal x},(x_i)_{i\in I} \Bigr) \biggm|   x \in \{0,1\}^n \biggr\},     
\end{aligned}
\end{equation*}
where $a_0+  a^\intercal x$ is assumed to be positive on $\{0,1\}^n$. We will use valid inequalities of \textsf{BQP} to study the convex hull of $\G_{[n]}$, and will refer to this convex hull \revision{as the \textit{linear fractional polytope} (\textsf{LFP}). 
Clearly, the fractional program~\eqref{eq:frac+linear} is equivalent to maximizing a linear function over \textsf{LFP}. 
The linear fractional set $\G_I$ has also appears in reformulations of linear fractional zero-one programs~\cite{tawarmalani2002global,borrero2017fractional,mehmanchi2019fractional}, where the objective} is to optimize the sum of ratios of affine functions subject to a set of linear constraints. This class of problems has many applications including  assortment planning, facility location, and mean dispersion, see~\cite{borrero2017fractional}. }

\revision{A tractable (polynomial-size) description of \textsf{LFP} is not possible, unless P=NP, since such a description would yield a polynomial time algorithm to solve \eqref{eq:frac+linear}, which is shown to be NP-hard in Example~\ref{ex:frac-linear}. 
Instead, we propose a hierarchy of polyhedral relaxations of \textsf{LFP}. For $k \in [n]$, the \textit{$k$-term relaxation} of \textsf{LFP} is defined as follows:
\begin{equation*}
 \bigcap_{I \subseteq [n] \colon\vert I \vert =k} \conv(\G_I), 
\end{equation*}
where $\conv(\G_I)$ is extended to the space of $\G_{[n]}$ by appending the remaining variables so that the intersection is well-defined. In particular, we will use valid inequalities of \textsf{BQP} to describe the $k$-term relaxation of $\textsf{LFP}$. To make this connection apparent, we begin with a remark on a slightly general setting.}

\revision{\begin{remark}\label{rmk:frac-func}
Let $f(\cdot)$ be a vector of functions mapping from $\{0,1\}^n$ to $\R^m$, and consider two sets
    \begin{equation*}
\begin{aligned}
\mathcal{G}&:= \left\{ \left(\frac{1}{a_0 + a^\intercal x},\frac{x}{a_0 + a^\intercal x},\frac{f(x)(a_0+a^\intercal x)}{a_0+a^\intercal x} \right)\in\R\times\R^{n}\times\R^{m}  \;\middle|\;  x \in \{0,1\}^n \right\} \\
\mathcal{F}&:=\bigl\{(1,x,f)\in \R \times \R^{n}\times\R^{m}  \bigm|  f=f(x)(a_0+a^\intercal x), \ x\in\{0,1\}^n\bigr\}.
\end{aligned}
\end{equation*}
Assume that $a_0+a^\intercal x$ is positive on $\{0,1\}^n$. By setting $\alpha = (a_0, a^\intercal, 0)\in \R\times\R^n\times\R^m$ and invoking Theorem~\ref{thm:fractionalmain}, the convex hull of $\mathcal{G}$ is given as follows:
\begin{equation*}
\conv(\mathcal{G})=\bigl\{(\rho,y,g)  \bigm| (\rho, y,g) \in \rho \conv(\mathcal{F}),\ a_0\rho + a^\intercal y =1,\ \rho \geq 0\bigr\}. \hfill \Halmos
\end{equation*}
\end{remark}
}

\revision{Now, we use Remark~\ref{rmk:frac-func} to relate the convex hull of $\mathcal{G}_I$ and \textsf{BQP}. For $I \subseteq [n]$, let $G(I)$ be the graph with $V = [n]$ and $E = \bigl\{(i,j) \bigm| i \in I,\ j \neq i \bigr\}$. Letting $f(x) = (x_i)_{i \in I}$, Remark~\ref{rmk:frac-func} yields 
\begin{equation}\label{eq:lfp}
 \begin{aligned}
     \conv(\G_I) =  \Bigl\{\bigl(\rho,y, (x_i)_{i \in I} \bigr) \Bigm|  (y,w) &\in \rho \QP_{G(I)},\ \rho \geq 0,\ a_0\rho +  a^\intercal y  =1,\ \\
     &  x_i = \ell_i(y,w)  \text{ for } i \in I  \Bigr\},
 \end{aligned}
\end{equation}
where $\ell_i(y,w) := (a_0+a_i)y_i +  \sum_{j \neq i}a_j w_{ij}$. Observe that we relax $x_i\times (a_0+a^\intercal x)$ instead of $x_i\times \frac{1}{a_0+a^\intercal x}$ over $\{0,1\}^n$ using techniques in~\cite{quesada1995global,tawarmalani2001semidefinite}. Since we can distribute the product over the summation and allows us to leverage \textsf{BQP} literature to derive inequalities, our approach performs significantly better (see Section~\ref{section:computation}). }

Next, we use valid inequalities of \textsf{BQP} and~\eqref{eq:lfp} to derive, for various settings, explicit descriptions of the $k$-term relaxations. For $I \subseteq V$ with $\vert I \vert = 1$, the graph $G(I)$ is acyclic. Proposition 8 in~\cite{padberg1989boolean} shows that, in this case, $\QP_{G(I)}$ is described by McCormick inequalities. This, together with~\eqref{eq:lfp}, yields that the following explicit description of the $1$-term relaxation of \textsf{LFP}:
\begin{equation}\label{eq:1term}
\begin{aligned}
\Bigl\{(\rho,y,x) \Bigm|  (\rho,y,w) &  \text{ satisfies}~(\ref{eq:mc} ) ,\ a_0\rho + a^\intercal y = 1,\ \rho\geq 0,\   \\
&  x_i = \ell_i(y,w) \text{ for } i \in [n] \Bigr\}.
\end{aligned}\tag{\textsc{1-Term}}
\end{equation}
We next characterize the $k$-term relaxation for $k \geq 2$ using a classical decomposition result (see \cite{SCHRIJVER1983104} and Theorem 1 in~\cite{del2018decomposability}).  
\begin{lemma}\label{lemma:common_simplex}
For $i \in [m]$, let $D_i$ be a subset in the space of variable $(x_i,y)$, and let $D := \bigl\{(x,y) \bigm| (x_i,y) \in D_i \; \text{ for } i \in [m] \bigr\} $. If $\proj_y(D_1) = \cdots = \proj_y(D_m) = V$  and $V$ is a set of finite affinely independent points, $\conv(D) = \bigl\{(x,y) \bigm| (x_i,y) \in \conv(D_i) \text{ for }  i \in [m] \bigr\}$. 
\end{lemma}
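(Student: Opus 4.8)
The plan is to prove the two inclusions separately, with essentially all the work in the nontrivial direction. Write $R := \bigl\{(x,y) \bigm| (x_i,y)\in\conv(D_i) \text{ for } i\in[m]\bigr\}$ for the right-hand side. The inclusion $\conv(D)\subseteq R$ is immediate: each constraint $(x_i,y)\in\conv(D_i)$ carves out a convex subset of the joint $(x,y)$-space, being the preimage of a convex set under the linear projection $(x,y)\mapsto(x_i,y)$, so $R$ is convex; since $D\subseteq R$ by definition, convexity gives $\conv(D)\subseteq R$.

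For the reverse inclusion $R\subseteq\conv(D)$, fix $(\bar x,\bar y)\in R$. The entire role of the hypothesis is to let me align the $y$-coordinate across the $m$ separate convex representations. Enumerate $V=\{v_1,\dots,v_d\}$. Because $\proj_y(\conv(D_i))=\conv(V)$ and $V$ is affinely independent, $\conv(V)$ is a simplex, so $\bar y$ admits a \emph{unique} representation $\bar y=\sum_k \mu_k v_k$ as a convex combination of the $v_k$. Now for each $i$ I write $(\bar x_i,\bar y)\in\conv(D_i)$ as a finite convex combination of points of $D_i$ (Carathéodory) and, since every such point has its $y$-coordinate equal to some $v_k$, I group the terms by vertex. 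Reading off the $y$-coordinate, the total weight that the $i$-th representation places on $v_k$ is itself a vector of barycentric coordinates for $\bar y$; by uniqueness it must equal $\mu_k$, \emph{independently of $i$}. Collapsing the terms within each group produces, for every $i$ and every $k$ with $\mu_k>0$, a point $(\hat x_{ik},v_k)\in\conv(D_i)$ such that $\bar x_i=\sum_k \mu_k\hat x_{ik}$.

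Finally I assemble the vertex-indexed points $p_k:=(\hat x_{1k},\dots,\hat x_{mk},v_k)$ and verify $(\bar x,\bar y)=\sum_k \mu_k p_k$, which holds coordinatewise by the previous relations. It then remains to show each $p_k\in\conv(D)$, and here the coupling between the blocks disappears because the $y$-coordinate is pinned to the single point $v_k$: letting $D_i^k:=\{x_i\mid (x_i,v_k)\in D_i\}$ (nonempty since $v_k\in V=\proj_y(D_i)$), the slice of $D$ at $y=v_k$ is exactly $\bigl(D_1^k\times\cdots\times D_m^k\bigr)\times\{v_k\}\subseteq D$, and since convex hull commutes with Cartesian product, the memberships $\hat x_{ik}\in\conv(D_i^k)$ give $p_k\in\conv(D)$. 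Hence $(\bar x,\bar y)$ is a convex combination of points of $\conv(D)$ and therefore lies in $\conv(D)$. The one delicate step — and the only place the affine-independence hypothesis is genuinely needed — is the alignment: without uniqueness of barycentric coordinates the per-block vertex weights could disagree across $i$, and there would be no canonical way to stitch the $m$ separate representations into a single convex combination supported on $D$.
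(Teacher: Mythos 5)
Your proof is correct. There is, however, no in-paper proof to compare it against: the paper invokes this lemma as a ``classical decomposition result,'' citing Schrijver (1983) and Theorem 1 of Del Pia and Khajavirad on decomposability of multilinear sets, and its appendix contains proofs only of the results that build on the lemma (Corollary~\ref{cor:two-term}, Proposition~\ref{prop:rlt-lfp}, Theorem~\ref{them:RLTconverge}). Your argument is essentially the standard one underlying those citations, and it is executed correctly: the easy inclusion $\conv(D)\subseteq R$ follows from convexity of $R$ as an intersection of preimages of convex sets under linear projections; for the reverse inclusion, affine independence of $V$ makes $\conv(V)$ a simplex, so the barycentric coordinates of $\bar y$ are unique, which forces the vertex-aggregated weights $\nu_{ik}$ in each of the $m$ separate convex representations to equal the common $\mu_k$, and this alignment lets you assemble the points $p_k$ supported on the slices $D_1^k\times\cdots\times D_m^k\times\{v_k\}\subseteq D$. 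The auxiliary facts you rely on — projection commutes with convex hull (so $\proj_y\bigl(\conv(D_i)\bigr)=\conv(V)$), convex hull commutes with finite Cartesian products, and nonemptiness of each slice from $\proj_y(D_i)=V$ — are all valid and applied in the right places, and you correctly isolate affine independence as the single hypothesis that makes the stitching canonical. The only cosmetic remark is that Carath\'eodory is unnecessary: membership in a convex hull already yields a finite convex combination by definition.
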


\begin{corollary}\label{cor:two-term}
        The $2$-term relaxation is given by~\eqref{eq:1term} and~\eqref{eq:tri}.
\end{corollary}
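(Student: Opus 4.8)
The plan is to fix a pair $\{i,j\}$, read off an explicit description of $\conv(\G_{\{i,j\}})$ from~\eqref{eq:lfp} by first characterizing $\QP_{G(\{i,j\})}$, and then intersect the resulting descriptions over all pairs. Recall that~\eqref{eq:lfp} governs $\conv(\G_{\{i,j\}})$ through the scaled polytope $\rho\QP_{G(\{i,j\})}$ together with the affine relations $a_0\rho + a^\intercal y = 1$, $\rho \geq 0$, $x_i = \ell_i(y,w)$ and $x_j = \ell_j(y,w)$. The graph $G(\{i,j\})$ is a ``book'' of triangles: it consists of the edge $(i,j)$ together with, for each $k \notin \{i,j\}$, the two edges $(i,k)$ and $(j,k)$, so that every edge lies in a triangle $\{i,j,k\}$ and all these triangles share the edge $(i,j)$.

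The key step is to decompose $\QP_{G(\{i,j\})}$ over these triangles using Lemma~\ref{lemma:common_simplex}. I would take the shared coordinates to be $(x_i,x_j,z_{ij})$ and, for each $k \notin \{i,j\}$, the private coordinates to be $(x_k,z_{ik},z_{jk})$; the block $D_k$ is then the $0\mathord{-}1$ set underlying $\QP$ on the triangle $\{i,j,k\}$, i.e. $\QP_{K_3}$. Since $x_k$ is free in each block, every block projects onto the same set of shared values, namely $\{(0,0,0),(1,0,0),(0,1,0),(1,1,1)\}$, and these four points are affinely independent in $\R^3$. Lemma~\ref{lemma:common_simplex} then gives $\QP_{G(\{i,j\})} = \bigl\{(x,z) \bigm| (x_k,z_{ik},z_{jk},x_i,x_j,z_{ij}) \in \QP_{K_3} \text{ for each } k\bigr\}$, so the book decomposes into its individual triangles. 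Invoking the classical description of $\QP_{K_3}$ by the McCormick and triangle inequalities~\cite{padberg1989boolean}, I conclude that $\QP_{G(\{i,j\})}$ is described by~\eqref{eq:mc} on all its edges and~\eqref{eq:tri} on all triangles $\{i,j,k\}$. Homogenizing with $\rho$ (substituting $x\mapsto y$, $z\mapsto w$) and appending the affine relations as in~\eqref{eq:lfp} yields the description of $\conv(\G_{\{i,j\}})$.

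Finally I would intersect over all pairs. As $\{i,j\}$ ranges over all pairs, the edge sets $\bigcup_{\{i,j\}} E\bigl(G(\{i,j\})\bigr)$ exhaust all pairs, so the McCormick inequalities together with the relations $x_i=\ell_i(y,w)$ accumulate to exactly~\eqref{eq:1term}; likewise every triple $\{a,b,c\}$ occurs as $\{i,j,k\}$ for some pair (indeed for every $\{i,j\}\subseteq\{a,b,c\}$), so the triangle inequalities accumulate to~\eqref{eq:tri}. The containment of~\eqref{eq:1term} and~\eqref{eq:tri} in each $\conv(\G_{\{i,j\}})$ is immediate upon restricting the product variables to the edges of $G(\{i,j\})$. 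The main obstacle I anticipate lies in the opposite containment: because the product variables $w$ are auxiliary and are projected out of each $\conv(\G_{\{i,j\}})$, one must argue that the per-pair certificates can be taken to share a single global $w$, so that the intersection of the projections coincides with the projection of the global system defined by~\eqref{eq:mc},~\eqref{eq:tri} and the affine relations. This is the same reconciliation that already underlies the passage from $\bigcap_{\vert I\vert=1}\conv(\G_{I})$ to~\eqref{eq:1term}, and the triangle-by-triangle decomposition supplied by Lemma~\ref{lemma:common_simplex} is precisely what keeps the product variables on each triangle mutually consistent; verifying the affine independence of the four shared points and confirming that the intersection produces no triangle inequality beyond those in~\eqref{eq:tri} are the remaining routine checks.
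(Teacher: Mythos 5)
Your proof follows the paper's argument essentially verbatim: fix $I=\{i,j\}$, decompose $G(I)$ into the triangles $\{i,j,k\}$ sharing the edge $(i,j)$, observe that each triangle's $0\mathord{-}1$ set projects onto the four affinely independent points $\bigl\{(x_i,x_j,x_ix_j)\bigm| x_i,x_j\in\{0,1\}\bigr\}$, invoke Lemma~\ref{lemma:common_simplex} to write $\QP_{G(I)}$ as the intersection of the triangle hulls (each described by McCormick and triangle inequalities per~\cite{padberg1989boolean}), and transfer to $\conv(\G_I)$ via~\eqref{eq:lfp}. The ``obstacle'' you flag at the end --- reconciling the per-pair certificates $w$ into a single global one when intersecting over all pairs --- is likewise left implicit in the paper's own proof, which stops after characterizing $\QP_{G(I)}$ for a single $I$, so your treatment is no less complete than the published one.
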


The $k$-term relaxation requires a characterization of $\QP_{G(I)}$, for $I\subseteq V$ with $\vert I \vert = k$. Unlike the case for $k=2$, when $k\ge 3$, Lemma~\ref{lemma:common_simplex} does not decompose $\QP_{G(I)}$ into lower dimensional sets. However, $k^{\text{th}}$-level RLT can be used to describe the $k$-term relaxation. This result relies on the observation that $k^\text{th}$-level RLT introduces variables so that the decomposition lemma can be invoked. 

\begin{proposition}\label{prop:rlt-lfp}
     For $2\le k \le n-1$,  the $k$-term relaxation is given as follows
     \[
     \begin{aligned}
\Bigl\{(\rho,y,x) \Bigm| (y,w) & \in \rho \RLT_{k},\ a_0\rho + a^\intercal y = 1,\ 0\leq y \leq \rho,\ \\
& x_i = (a_0+a_i)y_i + \sum_{j \neq i}a_{ij}w_{ij} \text{ for } i \in [n]  \Bigr\},         
     \end{aligned}
     \]
     where, for each $S\subseteq [n]$ with $2 \leq \vert S\vert \leq k+1$, the variable $w_S$ represents $\frac{\prod_{j \in S}x_j}{a_0+a^\intercal x}$. 
\end{proposition}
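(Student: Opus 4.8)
The plan is to push the statement into the numerator (polynomial) space via~\eqref{eq:lfp}, reduce it to a combinatorial identity relating the polytopes $\QP_{G(I)}$ to $\RLT_k$, and then prove that identity with Lemma~\ref{lemma:common_simplex} and Lemma~\ref{lemma:rlt}. By Remark~\ref{rmk:frac-func} and~\eqref{eq:lfp}, each $\conv(\G_I)$ is the image of $\QP_{G(I)}$ under the projective correspondence of Theorem~\ref{thm:fractionalmain}, together with the linear relations $a_0\rho+a^\intercal y=1$ and $x_i=\ell_i(y,w)$. Since $\{0,1\}^n$ is bounded, the hypotheses of Theorem~\ref{thm:fractionalmain} hold and the boundary case $\rho=0$ is handled by the recession cone, so it suffices to argue for $\rho>0$ and rescale by $\rho$. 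Under this correspondence the proposition reduces to the following claim in the space of monomials of degree at most $k+1$: the degree-$\le 2$ projection of $\RLT_k$ coincides with the intersection over all $k$-subsets $I$ of the degree-$\le 2$ projections of the lifted $\QP_{G(I)}$.

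The crux, and the reason $k^{\text{th}}$-level RLT is exactly what is required, is the following use of Lemma~\ref{lemma:common_simplex}. For a fixed $I$ with $\vert I\vert=k$ the $2^k$ points $\{0,1\}^I$ are not affinely independent once $k\ge 3$, so the lemma cannot be applied to $(x_i)_{i\in I}$ directly. The moment map $x\vert_I\mapsto\bigl(\prod_{i\in S}x_i\bigr)_{\emptyset\neq S\subseteq I}$, however, sends these points to $2^k$ affinely independent points, and $\RLT_k$ supplies precisely these coordinates (the monomials of degree up to $k$ inside $I$) as well as the products $x_j\prod_{i\in S}x_i$ for $j\notin I$ (degree up to $k+1$). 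Taking the image simplex as the common factor $V$ and, for each $j\notin I$, the block of coordinates built from $x_j$ as the fibre over $V$, Lemma~\ref{lemma:common_simplex} shows that the lifted $\conv(\G_I)$ is cut out by requiring, for every $j$, that the restriction to the window $I\cup\{j\}$ lie in the convex hull of the integer monomial vectors on $I\cup\{j\}$. By Lemma~\ref{lemma:rlt} with $d=k+1$, that convex hull is exactly the $\RLT_k$ system restricted to $I\cup\{j\}$. Hence the lifted $\conv(\G_I)$ is the subsystem of $\RLT_k$ assembled from the windows $I\cup\{j\}$.

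Writing $R_k$ for the set described in the proposition, one containment now follows at once. Every $(k+1)$-subset is of the form $I\cup\{j\}$, so $\RLT_k$ is the intersection over $I$ of these window subsystems; since the projection of an intersection is contained in the intersection of the projections, we get $R_k\subseteq\conv(\G_I)$ for every $I$, that is, $R_k$ is contained in the $k$-term relaxation. The reverse containment is the technical heart. A point lying in $\conv(\G_I)$ for \emph{every} $I$ comes, by the per-$I$ decomposition, with a consistent family of high-degree monomials on the coordinates touched by each $I$, but these families may disagree on the monomials they share. What remains is to glue them into a \emph{single} monomial vector of degree at most $k+1$ lying in $\RLT_k$; equivalently, to show that projection onto the degree-$\le 2$ coordinates commutes with the intersection $\bigcap_I(\text{window subsystem})=\RLT_k$.

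I expect this gluing to be the main obstacle, and I would attack it by induction on $n\ge k+1$. The base case $n=k+1$ is immediate, since then $G(I)$ is complete for every $I$ and both sides equal $\QP_{K_{k+1}}$, which is the degree-$\le 2$ projection of $\RLT_k$ by Lemma~\ref{lemma:rlt}. For the step I would restrict a point of the $k$-term relaxation to $[n-1]$, invoke the induction hypothesis to obtain a global $\RLT_k$-lift of the monomials supported on $[n-1]$, and then define the monomials involving vertex $n$ one window $I\cup\{n\}$ at a time: the decomposition of $\conv(\G_I)$ supplies values consistent on $I\cup\{n\}$, and the fact that all windows through $n$ share the \emph{same} affinely independent $I$-simplex is what should force these values to agree both with the inductive lift and with one another. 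Establishing this coherence of the conditional moments at vertex $n$ is the delicate step, and it is precisely where the affine independence furnished by the RLT lifting is used a second time.
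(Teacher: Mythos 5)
Your first two paragraphs are, in substance, the paper's entire proof: fix $I$, reduce via~\eqref{eq:lfp} to describing $\QP_{G(I)}$, pass to the extended formulation whose coordinates are the multilinear monomials on the windows $I\cup\{j\}$, observe that the common projection of these pieces is the (affinely independent) set of moment vectors of $\{0,1\}^I$, and combine Lemma~\ref{lemma:common_simplex} (to glue over $j\notin I$) with Lemma~\ref{lemma:rlt} applied with $d=k+1$ on each window. Apart from the minor slip that $\{0,1\}^I$ already fails affine independence at $k=2$ (not only at $k\ge 3$), this first half coincides with the argument the paper gives, and the paper's proof \emph{ends} there.

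The gap is everything after that. You declare the reverse containment---gluing the per-$I$ witnesses into a single vector $w$ lying in $\RLT_k$---to be the technical heart, and you do not prove it: the induction is a sketch whose key step (``coherence of the conditional moments at vertex $n$'') is left open, and even your base case $n=k+1$ is not ``immediate,'' since there too each $\conv(\G_I)$ only enforces $x_i=\ell_i(y,w)$ for $i\in I$, so the per-$I$ witnesses need not agree. More importantly, this second half is solving a problem the paper never faces: in the paper, the $k$-term relaxation is the intersection of the descriptions~\eqref{eq:lfp} written with one \emph{shared} set of lifted variables $w$ (exactly as the hierarchy in Theorem~\ref{them:RLTconverge} shares the variables $u$); since every $(k+1)$-subset of $[n]$ equals $I\cup\{j\}$ for some $I$ with $\vert I\vert =k$ (this is where $k\le n-1$ enters), intersecting over $I$ simply assembles all window subsystems into the full system $(y,w)\in\rho\RLT_k$ together with $x_i=\ell_i(y,w)$ for all $i$, which is the displayed set---no witness-gluing step occurs. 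If instead one insists on your reading, where $w$ is projected out separately for each $I$ and the intersection is taken afterwards, the gluing principle you are after is not one to expect: already its $k=1$ analogue~\eqref{eq:1term} fails under that reading. Indeed, take $n=2$ and $a_0=a_1=a_2=1$; the point
\[
(\rho,y_1,y_2,x_1,x_2)=\Bigl(\frac{1}{2},\frac{1}{4},\frac{1}{4},\frac{1}{2},\frac{3}{4}\Bigr)
\]
lies in $\conv(\G_{\{1\}})$ (weights $\frac12,\frac12$ on the points with $x=(1,0)$ and $x=(0,1)$) and in $\conv(\G_{\{2\}})$ (weights $\frac14,\frac34$ on the points with $x=(0,0)$ and $x=(1,1)$), each extended by a free remaining variable, yet any common witness forces $x_1-x_2=2(y_1-y_2)=0$. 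So the missing piece of your proof should be repaired by adopting the shared-lifted-variable reading of the $k$-term relaxation (after which your first half is already a complete proof), not by the proposed induction.
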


\subsection{Bilinear fractional programs with box constraints and \textsf{BQP}}
\def \Q {\mathcal{Q}}
In this subsection, we consider the bilinear fractional programming \revision{problem}  with box constraints
\begin{equation}\label{eq:bfp}
    \min \biggl\{ \frac{x^\intercal B x + d^\intercal x + d_0}{x^\intercal A x + c^\intercal x + c_0} \biggm| x \in [0,1]^n \biggr\}, 
\end{equation}
where $A$ and $B$ are two $n \times n$ upper triangular matrices with \textit{zero-diagonal}, and $c,d\in \R^n$. We assume that the denominator  $x^\intercal A x + c^\intercal x + c_0$ is positive over the box. The main result of this subsection, Proposition~\ref{prop:qfp-bqp}, implies that the continuous optimization problem~\eqref{eq:bfp} is equivalent to its discrete counterpart, that is, there exists an optimal solution $x^*$ such that $x^*\in\{0,1\}^n$. This class of  optimization problems appears in many application areas, \textit{e.g.}, bond portfolio optimization~\cite{konno1989bond}, maximum mean dispersion problem~\cite{prokopyev2009equitable}, and feature
selection~\cite{mehmanchi2021solving}. 

We will show that \eqref{eq:bfp} is closely related to \textsf{BQP}. Towards this end, we linearize the objective function by introducing variables. Let $G=(V,E)$ be a graph with $V = [n]$ and 
\[
E = \bigl\{(i,j) \bigm| a_{ij} \neq 0 \bigr\} \cup \bigl\{(i,j) \bigm| b_{ij} \neq 0 \bigr\},
\]
which consists of the indices of the non-zero entries in $A$ and $B$. With the graph $G$, we denote the denominator as 
\begin{equation}\label{eq:defqG}
q_G(x) = \sum_{(i,j) \in E}a_{ij} x_ix_j + \sum_{i \in V} c_ix_i +c_0.
\end{equation}
Now, we introduce a variable $\rho\in \R$, a variable $y_i$ for each node $i \in V$, and $w_{ij}$ for each edge $(i,j) \in E$, and require that $(\rho,y,w)\in \G_G$, where
\[
\begin{aligned}
\G_G := \left\{ \left( \frac{(1,x,z)}{q_G(x)} \right) \;\middle|\; x \in [0,1]^{\vert V \vert},\ z_{ij}  = x_ix_j \text{ for } (i,j) \in E \right\}.     
\end{aligned}
\]
It follows readily that the bilinear fractional program~(\ref{eq:bfp}) is equivalent to 
\[
\min \Biggl\{ \sum_{(i,j)\in E} b_{ij}w_{ij} + \sum_{i \in V}d_iy_i + d_0\rho  \Biggm| (\rho, y,w) \in \G_G \Biggr\}.
\]
The set $\G_G$ above can be replaced with $\conv(\G_G)$ without affecting the optimal value since the objective function is linear. In other words, it suffices to study  $\conv(\G_G)$.

Now, we show that the convexification of $\G_G$ and of the boolean quadric polytope $\QP_G$ are equivalent, and, thus polyhedrality of $\QP_G$ implies that of $\conv(\G_G)$.
%
\begin{proposition}\label{prop:qfp-bqp}
    Assume that $q_G(\cdot)$ is positive on $[0,1]^{\vert V \vert}$. Then,
    \[
    \begin{aligned}
    \conv(\G_G)= \biggl\{ (\rho, y,w) \biggm| ( y,w) \in &\rho  \QP_G ,\ \rho \geq 0,\ \\
    & \sum_{(i,j) \in E} a_{ij}w_{ij} + \sum_{i \in V}c_i y_{i} + c_0\rho  = 1  \biggr\}. 
    \end{aligned}
    \]
    Moreover, $\QP_G = \bigl\{ (x,z) \bigm| (1,x,z) \in \sigma \conv(\G_G),\ \sigma \geq 0 \bigr\}$.
\end{proposition}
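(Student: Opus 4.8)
The plan is to realize Proposition~\ref{prop:qfp-bqp} as a direct specialization of Theorem~\ref{thm:fractionalmain}, once I supply one auxiliary fact: that convexifying the bilinear graph over the \emph{continuous} box $[0,1]^{\vert V\vert}$ already yields $\QP_G$. Concretely, I would introduce the base set
\[
\F_G := \bigl\{ (x,z) \bigm| x \in [0,1]^{\vert V\vert},\ z_{ij} = x_ix_j \text{ for } (i,j) \in E \bigr\},
\]
its lift $\F := \bigl\{(1,x,z) \bigm| (x,z) \in \F_G\bigr\}$, and the linear form $\alpha = (c_0, c, a)$ indexed by the constant coordinate, the node coordinates, and the edge coordinates, so that $\alpha^\intercal (1,x,z) = q_G(x)$ and the associated ratio set $\bigl\{(1,x,z)/q_G(x) \bigm| (x,z)\in\F_G\bigr\}$ is exactly $\G_G$. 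With these identifications the two displayed claims are, respectively, the images of \eqref{eq:FprimeConv} and \eqref{eq:FConv} after substituting $\conv(\F) = \bigl\{(1,x,z) \bigm| (x,z)\in\conv(\F_G)\bigr\}$.

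The crux — and the step I expect to be the main obstacle — is the auxiliary identity $\conv(\F_G) = \QP_G$, which is where the continuous box silently collapses to its binary vertices. I would prove it by two inclusions. For $\QP_G \subseteq \conv(\F_G)$, every binary generator $(v, z(v))$ with $v \in \{0,1\}^{\vert V\vert}$ lies in $\F_G$ since $\{0,1\}^{\vert V\vert} \subseteq [0,1]^{\vert V\vert}$, so their convex hull $\QP_G$ is contained in $\conv(\F_G)$. For the reverse inclusion I would use multilinearity: fix $x \in [0,1]^{\vert V\vert}$, let $X_1,\dots,X_{\vert V\vert}$ be independent Bernoulli variables with $\pr(X_i = 1) = x_i$, and note that $\E[X_i] = x_i$ and, because $G$ has no self-loops so $i \neq j$ on every edge, $\E[X_iX_j] = x_ix_j = z_{ij}$. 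Hence $(x, z(x))$ is the expectation — and thus a convex combination with weights $\prod_{i : v_i = 1} x_i \prod_{i : v_i = 0}(1-x_i)$ — of the binary generators, giving $\F_G \subseteq \QP_G$ and therefore $\conv(\F_G) \subseteq \QP_G$.

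With $\conv(\F_G) = \QP_G$ in hand, I would verify the hypotheses of Theorem~\ref{thm:fractionalmain}: $\F$ is nonempty and bounded (as $x_i, x_ix_j \in [0,1]$); the denominator $q_G = \alpha^\intercal f$ is positive on the compact box, hence bounded below by some $\epsilon > 0$; and the first coordinate of $f$ is the constant $1$. Applying \eqref{eq:FprimeConv} and writing a generic point of $\conv(\G_G)$ as $(\rho, y, w)$, the condition $(\rho,y,w) \in \rho\conv(\F)$ reads $(y,w) \in \rho\,\QP_G$, while the normalization $\alpha^\intercal (\rho,y,w) = 1$ becomes $\sum_{(i,j)\in E} a_{ij} w_{ij} + \sum_{i\in V} c_i y_i + c_0\rho = 1$, which is precisely the first displayed formula. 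The degenerate case $\rho = 0$ contributes nothing, since $\QP_G$ is a polytope so $0\,\QP_G = \rec(\QP_G) = \{0\}$ and the normalization would force $0 = 1$.

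Finally, for the ``Moreover'' part I would invoke \eqref{eq:FConv} with $f_1 = 1$. Since $\conv(\F) = \bigl\{(1,x,z) \bigm| (x,z)\in\QP_G\bigr\}$, a point $(1,x,z)$ lies in $\conv(\F)$ exactly when $(x,z) \in \QP_G$; equation \eqref{eq:FConv} rewrites this membership as the existence of $\sigma \geq 0$ with $(1,x,z) \in \sigma\conv(\G_G)$, which after dropping the constant first coordinate is exactly $\QP_G = \bigl\{(x,z) \bigm| (1,x,z) \in \sigma\conv(\G_G),\ \sigma \geq 0\bigr\}$. The only care needed is again boundedness, which rules out spurious recession directions at $\sigma = 0$.
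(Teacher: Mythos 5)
Your proof is correct, and its skeleton is the same as the paper's: lift the bilinear graph to $\F = \bigl\{(1,x,z) \bigm| x \in [0,1]^{\vert V\vert},\ z_{ij}=x_ix_j\bigr\}$, identify $\conv(\F)$ with $\QP_G$, and then read off both displayed formulas from \eqref{eq:FprimeConv} and \eqref{eq:FConv} of Theorem~\ref{thm:fractionalmain}. The one genuine difference is how the identity $\conv(\F_G)=\QP_G$ is obtained: the paper simply cites Theorem~1 of \cite{burer2009nonconvex} for the fact that the convex hull of the bilinear graph over $[0,1]^{\vert V\vert}$ coincides with its convex hull over $\{0,1\}^{\vert V\vert}$, whereas you prove it from scratch via the product-Bernoulli argument, writing $(x,z(x))$ as the expectation of $(X,(X_iX_j)_{(i,j)\in E})$ with independent $X_i\sim\text{Bernoulli}(x_i)$ and explicit weights $\prod_{i:v_i=1}x_i\prod_{i:v_i=0}(1-x_i)$; this is exactly where the no-self-loop hypothesis enters ($\E[X_iX_j]=x_ix_j$ requires $i\neq j$), a point the paper leaves implicit in the citation. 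Your version buys a self-contained and elementary proof, and you are also more explicit than the paper about the hypotheses of Theorem~\ref{thm:fractionalmain}: the compactness argument giving $q_G \geq \epsilon > 0$, and the dismissal of the $\rho=0$ (resp.\ $\sigma=0$) branch via $0\,\QP_G=\rec(\QP_G)=\{0\}$ clashing with the normalization $\alpha^\intercal g = 1$. The paper's version buys brevity by outsourcing the hull identity to the literature; nothing in your argument is at odds with it.
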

\revision{Next, we discuss some consequences of this equivalence result. It is shown in \cite{fiorini2012linear} that, for a complete graph $G$, there does not exist a polynomial-sized extended formulation for $\conv(\G_G)$. Neither does a polynomial-time separation algorithm exist unless $\text{P}=\text{NP}$. Nevertheless, we exploit \textsf{BQP} to construct relaxations of $\G_G$. Let the \textit{odd-cycle} relaxation of $\G_G$ be as follows: }
\begin{equation*}
    \begin{aligned}
        \Bigl\{(\rho,y,w) \Bigm| (\rho,y,w)& \text{ satisfies}~(\ref{eq:oddcycle}) \text{ for all possible $C$ and $D$},\  \\
      & \rho \geq 0,\  \sum_{(i,j) \in E} a_{ij}w_{ij} + \sum_{i \in V}c_i y_{i} + c_0\rho  = 1 
    \Bigr\}.
    \end{aligned}
\end{equation*}
\revision{ Using standard results obtained for $\QP_G$, this relaxation describes the convex hull of $\G_G$ if $G$ is a series-parallel graph.  A graph is \textit{series-parallel} if it arises from a forest by repeatedly replacing edges by parallel edges or by edges in series. }
\begin{corollary}\label{cor:poly-case}
   If $G$ is series-parallel graph then~$(\ref{eq:bfp})$ is polynomial-time solvable using the odd-cycle relaxation of $\G_G$. 
\end{corollary}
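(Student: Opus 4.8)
The plan is to reduce \eqref{eq:bfp} to linear optimization over the odd-cycle relaxation of $\G_G$ and then invoke the polynomial-time equivalence of separation and optimization \cite{grotschel2012geometric}. First I would recall that \eqref{eq:bfp} is equivalent to minimizing the linear objective $\sum_{(i,j)\in E} b_{ij} w_{ij} + \sum_{i\in V} d_i y_i + d_0 \rho$ over $\G_G$ and, since the objective is linear, over $\conv(\G_G)$. When $G$ is series-parallel, the discussion preceding the corollary (which combines Proposition~\ref{prop:qfp-bqp} with the fact that $\QP_G$ is described by \eqref{eq:oddcycle} for such $G$) shows that $\conv(\G_G)$ coincides with the odd-cycle relaxation. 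Hence \eqref{eq:bfp} reduces to minimizing a linear function over this explicitly described polyhedral relaxation.

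Next I would argue that one can separate over the odd-cycle relaxation in polynomial time. Given a query point $(\bar\rho, \bar y, \bar w)$, the membership test splits into three parts: the single linear equality and the inequality $\rho \geq 0$ are checked directly, while the homogenized family \eqref{eq:oddcycle} is handled by the polynomial-time odd-cycle separation routine cited after \eqref{eq:oddcycle}. Concretely, when $\bar\rho > 0$ I would rescale to $(\bar y/\bar\rho, \bar w/\bar\rho)$, run the $\QP_G$ odd-cycle separation (a minimum-weight odd-cycle computation in an auxiliary graph), and homogenize any returned violated inequality by multiplying its right-hand side by $\rho$; the boundary cases $\bar\rho \le 0$ are dealt with by the inequality $\rho\ge 0$ together with the conic ($\rho = 0$) form of the same routine. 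Having a polynomial-time separation oracle, the equivalence of separation and optimization then yields a polynomial-time algorithm for minimizing the linear objective, and an optimal $(\rho^*, y^*, w^*)$ recovers the optimal value of \eqref{eq:bfp}.

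To apply the equivalence of separation and optimization I would also verify that the relaxation is bounded with a polynomially bounded encoding: substituting $(y,w) = \rho(\hat x, \hat z)$ with $(\hat x, \hat z)\in \QP_G$, the equality constraint reads $\rho\, q_G(\hat x,\hat z) = 1$, and since $q_G$ is positive on the box, its linearization attains a positive minimum $\epsilon$ over $\QP_G$ (at a $0$--$1$ vertex), forcing $0 < \rho \le 1/\epsilon$ and hence $y, w$ bounded. The main obstacle I anticipate is not the reduction itself but the bookkeeping in the separation step---in particular making the homogenization of \eqref{eq:oddcycle} (and the treatment of the degenerate direction $\rho = 0$) rigorous, and confirming that the cited odd-cycle separation algorithm applies verbatim to arbitrary, possibly infeasible, query vectors rather than only to points of $[0,1]^n$. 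Everything else follows mechanically from Proposition~\ref{prop:qfp-bqp} and the known polyhedral description of $\QP_G$ for series-parallel graphs.
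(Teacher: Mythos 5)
Your proposal is correct and follows essentially the same route as the paper: identify $\conv(\G_G)$ with the odd-cycle relaxation via Proposition~\ref{prop:qfp-bqp} and the series-parallel characterization of $\QP_G$, transfer the polynomial-time odd-cycle separation of $\QP_G$ to the homogenized relaxation, and invoke the equivalence of separation and optimization. The rescaling-and-homogenization step you describe for the separation oracle is precisely the content of Remark~\ref{rmk:eq-sep}, which the paper cites instead of re-deriving; your extra care about boundedness and the $\rho=0$ direction is sound bookkeeping that the paper leaves implicit.
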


\begin{remark}
\def \M {\mathcal{M}}
    Proposition~\ref{prop:qfp-bqp} generalizes to multilinear fractional programming with box constraints. This problem is equivalent to minimizing a linear function over the \textit{multilinear polytope}, a polytope that has been studied extensively, see~\cite{del2017polyhedral,del2018multilinear,del2018decomposability,del2021running,del2022multilinear} and references therein. We elaborate on how this research can be leveraged for multilinear fractional programming. Let $G = (V, E)$ be a hypergraph, where $V$ is the set of nodes of $G$, and $E$ is a set of subsets of $V$ of cardinality at least two, named the hyperedges of $G$. With a hypergraph $G$, the multilinear polytope $\MP_G$ is defined as the convex hull of 
    \[
    \biggl\{(x,z) \biggm| x\in \{0,1\}^{\vert V\vert},\ z_e =  \prod_{i \in e} x_i \text{ for } e \in E \biggr\}.
    \]
On the other hand, minimizing the ratio of two multilinear functions over the box constraints is equivalent to minimizing a linear function over a set $\M_G$, which is defined as follows. With a hypergraph $G$, we associate a multilinear function $q_G(x) = \sum_{e \in E} a_e\prod_{i \in e} x_i + c^\intercal x +c_0$ and a set
\[
\begin{aligned}
\M_G = \biggl\{(\rho, y,w) \biggm| x &\in [0,1]^{\vert V\vert},\ \rho = \frac{1}{q_G(x)} ,\ \\
&y_i = x_i\rho \text{ for } i \in V ,\ w_e = \rho\prod_{i\in e}x_i \text{ for } e \in E \biggr\}. 
\end{aligned}
\]
Then, by Theorem~\ref{thm:fractionalmain}, it follows that
\[
 \conv(\M_G) = \biggl\{(\rho,y,w) \biggm| (y,w) \in \rho \MP_G,\ \rho \geq 0,\  \sum_{e \in E} a_e w_e + \sum_{i\in V} c_i x_i + c_0\rho =1 \biggr\}.
\]
Here, although the original problem is stated over $[0,1]^n$, the restriction to $\{0,1\}^{|V|}$ in $\text{MP}_G$ does not change the optimal value because simultaneous convex hull of multilinear functions over $[0,1]^n$ is the same as that over $\{0,1\}^n$, see Corollary 2.7 in  \cite{tawarmalani2010inclusion}. \Halmos
\end{remark}

\section{Ratio of quadratics via copositive optimization}\label{section:copositive}
In this section, we consider a class of quadratic fractional optimization problems defined as follows
\begin{equation}\label{eq:frac_quadratic}
    \min \biggl\{ \frac{x^\intercal B x + b^\intercal x + b_0}{x^\intercal A x + a^\intercal x + a_0} \biggm| x \in \X \cap \mathcal{L} \biggr\}, 
\end{equation}
where $\X$ is a subset of $\R^n$, and $\mathcal{L}$ is an affine set of $\R^n$.  Assume that $\X \cap \mathcal{L}$ is non-empty and bounded, and the denominator $q(x):=x^\intercal A x + a^\intercal x + a_0$ is positive over $\X \cap \mathcal{L}$. Let 
\[
\G = \biggl\{\frac{(1,x,xx^\intercal)}{q(x)} \biggm| x \in \X \cap \mathcal{L} \biggr\}.
\]
Then, the quadratic fractional program~\eqref{eq:frac_quadratic} is equivalent to minimizing the linear function $b_0\rho + \langle b,y \rangle + \langle B, Y\rangle$ over $\G$. Thus, optimzing the linear function over the convex hull of $\G$ would yield a relaxation with no gap. In the following, we will establish an equivalence between the convexification of $\G$ and $\F$, where 
\[
\F = \bigl\{(x,X) \bigm| x \in \X,\ X = xx^\intercal \bigr\}.
\]
The set $\F$ has been studied extensively in various settings;~see \cite{sturm2003cones,burer2009copositive,anstreicher2010computable,burer2015trust,ho2017second,yang2018quadratic,joyce2023convex} for various recent results and \cite{burer2015gentle} for a survey of earlier results. 

To describe the convex hull characterizations of $\F$, we introduce the following notation that we will use throughout the rest of the section. \revision{A matrix $M \in \SM^d$  is called completely positive if there exists $k$ nonnegative vectors $h_1, h_2, \ldots, h_k$ in $\R^d_+$ such that $M = \sum_{i \in [k]} h_i{h_i}^\intercal$.} We will use $\cp_d$ to denote the set of $d\times d$ completely positive matrices.  Given $(\rho,x,X) \in \R \times \R^d \times \mathbb{S}^d$, let 
\[
Z(\rho,x,X) := \begin{pmatrix}
	\rho && x^\intercal \\
	x && X
\end{pmatrix} \in \SM^{d+1}. 
\]

Now, we prove the equivalence of convexification between $\G$ and $\F$. Besides the presence of fractions in $\G$, the sets $\G$ and $\F$ also differ since the domain in $\G$ is intersected with an affine set. Thus, the equivalence result requires a facial decomposition besides Theorem~\ref{thm:fractionalmain}. The techniques are inspired by the proof of Theorem 2.6 in \cite{burer2009copositive}. 
\begin{proposition}\label{prop:frac-quad}
Assume that $q(x) >0$ over a non-empty bounded set $\X \cap \mathcal{L}$. Suppose that $\mathcal{L} = \{x \mid Cx=d\}$, where $C\in \R^{m\times n}$ and $d$ is a vector in $\R^m$. Then, 
	\[
	\begin{aligned}
\conv(\G) = \Bigl\{(\rho,y,Y) \Bigm| (y,Y)& \in \rho \conv(\F),\ \langle A, Y \rangle + \langle a, y \rangle + a_0\rho =1,\ \rho \geq 0, \\
& \Tr\bigl( CYC^\intercal - Cyd^\intercal - dy^\intercal C^\intercal + \rho dd^\intercal \bigr) = 0  \Bigr\}. 		
	\end{aligned}
\]
If the affine set $\mathcal{L}$ is $\R^n$ then $\conv(\F) = \bigl\{ (x,X) \bigm| (1,x,X) \in \sigma \conv(\G),\ \sigma \geq 0 \bigr\}$.
\end{proposition}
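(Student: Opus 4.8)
The plan is to apply Theorem~\ref{thm:fractionalmain}, but with a careful handling of the affine constraint $\mathcal{L}=\{x\mid Cx=d\}$, which is what separates this result from a direct invocation of the earlier theorem. First I would set up the homogenized lift exactly as in the proof of Theorem~\ref{thm:fractionalmain}: define the base set $\S := \bigl\{\bigl(q(x),1,x,xx^\intercal\bigr)\bigm| x\in\X\cap\mathcal{L}\bigr\}$ so that $\Phi(\S)$ recovers $\G$, with the denominator playing the role of the linear form $\alpha^\intercal f$. The boundedness of $\X\cap\mathcal{L}$ together with $q(x)>\epsilon$ (which follows from positivity of $q$ on a compact set, after noting $q$ is continuous) gives the hypotheses needed to run the earlier argument. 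The constraint $\langle A,Y\rangle+\langle a,y\rangle+a_0\rho=1$ is then exactly the $\alpha^\intercal g=1$ condition from~\eqref{eq:FprimeConv}, and the condition $(y,Y)\in\rho\conv(\F)$ with $\rho\geq 0$ is the content of the two-step homogenize-then-slice construction.

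The subtle point, and the step I expect to be the main obstacle, is encoding the affine set $\mathcal{L}$ correctly in the convex hull of $\F$, since $\F$ is defined only with $x\in\X$ and does \emph{not} itself carry the constraint $Cx=d$. The key identity I would exploit is that for a genuine point with $X=xx^\intercal$, the quantity $\Tr\bigl(CYC^\intercal - Cyd^\intercal - dy^\intercal C^\intercal + \rho dd^\intercal\bigr)$, after dividing through by $\rho$ to pass to $x=y/\rho$ and $X=Y/\rho$, equals $\|Cx-d\|^2$. Thus this trace expression is a sum of squares in the original variables, and it vanishes if and only if $Cx=d$, i.e. $x\in\mathcal{L}$. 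This is precisely the facial-decomposition idea borrowed from Theorem~2.6 of~\cite{burer2009copositive}: rather than intersecting $\F$ with the affine set, we append the single \emph{nonnegative} quadratic $\|Cx-d\|^2=0$, whose linearization is the trace term, and use that a nonnegative function whose convex-hull value is forced to zero must be zero at every point in the supporting convex combination. I would argue that the trace constraint, being a nonnegative combination that equals zero, forces each point in any representing convex combination of $\conv(\F)$ to satisfy $Cx^j=d$, so the representation only uses points of $\F$ that actually lie over $\mathcal{L}$.

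Concretely, the forward inclusion is straightforward: any point of $\G$ is the image under $\Phi$ of a point $\bigl(q(x),1,x,xx^\intercal\bigr)$ with $x\in\X\cap\mathcal{L}$, so it satisfies all listed constraints with $\rho>0$, and convexity of the right-hand side (a projection of a convex cone intersected with affine hyperplanes, as in the proof of Theorem~\ref{thm:mainthem}) extends this to $\conv(\G)$. For the reverse inclusion, I would take a point $(\rho,y,Y)$ in the right-hand side; boundedness rules out the $\rho=0$ case exactly as in Theorem~\ref{thm:fractionalmain} because $\langle A,Y\rangle+\langle a,y\rangle+a_0\rho=1$ cannot hold when $(y,Y)$ lies in the recession cone of the bounded set $\rho\conv(\F)$ at $\rho=0$. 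For $\rho>0$, write $(y,Y)=\rho\sum_j\lambda_j(x^j,X^j)$ with $(x^j,X^j)\in\F$; the vanishing trace constraint, being $\sum_j\lambda_j\|Cx^j-d\|^2=0$ with nonnegative summands, forces $Cx^j=d$ for every $j$, so each $x^j\in\X\cap\mathcal{L}$ and the point is a genuine convex combination of images of $\G$ after rescaling by $1/\rho$ and invoking Theorem~\ref{thm:mainthem}.

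For the final sentence, when $\mathcal{L}=\R^n$ the trace constraint is vacuous (one may take $C=0$, $d=0$), and then the claim $\conv(\F)=\bigl\{(x,X)\bigm|(1,x,X)\in\sigma\conv(\G),\ \sigma\geq 0\bigr\}$ is precisely equation~\eqref{eq:FConv} of Theorem~\ref{thm:fractionalmain} applied with first coordinate $f_1=1$; the hypothesis $f_1(x)=1$ holds by construction of $\F$, so no extra work is needed beyond citing that part of the theorem.
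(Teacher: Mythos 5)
Your strategy coincides with the paper's: apply Theorem~\ref{thm:fractionalmain} to the fractional set whose base lives over the bounded domain $\mathcal{X}\cap\mathcal{L}$, and absorb the affine constraint through the identity $\Tr\bigl(CXC^\intercal - Cxd^\intercal - dx^\intercal C^\intercal + dd^\intercal\bigr) = \Vert Cx-d\Vert^2$ at rank-one points, so that $\mathcal{F}' := \bigl\{(x,xx^\intercal)\bigm| x\in\mathcal{X}\cap\mathcal{L}\bigr\}$ is a face of $\mathcal{F}$ exposed by a valid linear inequality. The only real difference is that where the paper invokes the facial-decomposition lemma (Lemma~\ref{lemma:facial-decom}, due to Balas) to conclude $\conv(\mathcal{F}')=\conv(\mathcal{F})\cap H$, you re-prove that lemma inline: in any representation $(y,Y)=\rho\sum_j\lambda_j(x^j,X^j)$ with $(x^j,X^j)\in\mathcal{F}$, the nonnegative summands $\Vert Cx^j-d\Vert^2$ sum to zero and hence vanish individually. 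For $\rho>0$ your argument is correct and is, in substance, the paper's proof; the final sentence about $\mathcal{L}=\R^n$ is also handled exactly as in the paper.

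The genuine gap is your disposal of the case $\rho=0$. You rule it out on the grounds that $\conv(\mathcal{F})$ is bounded, but it is not: the hypothesis bounds only $\mathcal{X}\cap\mathcal{L}$, never $\mathcal{X}$, and in the paper's flagship application of this proposition (the copositive corollary) one has $\mathcal{X}=\R^n_+$, so $\conv(\mathcal{F})$ contains $(te,t^2ee^\intercal)$ for all $t\ge 0$ and its closure has recession directions $(0,M)$ for every completely positive $M$. At $\rho=0$ the condition $(y,Y)\in 0\cdot\conv(\mathcal{F})=\rec\bigl(\conv(\mathcal{F})\bigr)$ therefore does not force $(y,Y)=0$, and the normalization $\langle A,Y\rangle+\langle a,y\rangle=1$ alone gives no contradiction (e.g., $A=I$, $y=0$, $Y=vv^\intercal$ with $\Vert v\Vert=1$). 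Excluding such points genuinely requires the homogenized trace constraint, which at $\rho=0$ reads $\Tr(CYC^\intercal)-2d^\intercal Cy=0$, combined with boundedness of $\mathcal{X}\cap\mathcal{L}$: for $\mathcal{X}=\R^n_+$ it forces $Cv_k=0$ for every vector in a decomposition $Y=\sum_k v_kv_k^\intercal$, $v_k\ge 0$, and boundedness of $\{x\ge 0\mid Cx=d\}$ then yields $v_k=0$, contradicting the normalization. The paper's proof never needs this argument because it applies Theorem~\ref{thm:fractionalmain} to the bounded set $\mathcal{F}'$, whose recession cone is trivial, and only afterwards rewrites $\conv(\mathcal{F}')$ via Lemma~\ref{lemma:facial-decom}; if you keep your structure, you must either reorganize in the same way or supply the missing recession-cone argument. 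A smaller slip of the same kind: you extract the bound $q(x)>\epsilon$ from positivity of a continuous function on a compact set, but $\mathcal{X}\cap\mathcal{L}$ is only assumed bounded, not closed.
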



Next, we specialize the results in Proposition~\ref{prop:frac-quad} to three cases. First, we consider the case when $\X$ is the non-negative orthant. It follows readily that the convex hull of $\F$ can be described by the cone of completely positive matrices, that is 
\[
\conv(\F)= \bigl\{(x,X) \bigm|  Z(1,x,X) \in \cp_{n+1} \bigr\}.
\]
Thus, by Proposition~\ref{prop:frac-quad}, we obtain a copositive programming reformulation for minimizing a quadratic fractional function over a polytope. This result extends the copositive programming formulation for minimizing a quadratic function over a polytope shown in~\cite{burer2009copositive,burer2015gentle}. 
\begin{corollary}
If $\X$ is the non-negative orthant then~\eqref{eq:frac_quadratic} can be formulated as 
\[
\begin{aligned}
&\min  && b_0\rho + b^\intercal y +  \langle B, Y \rangle \\
&\st && Z(\rho, y, Y) \in \cp_{n+1} 	\\
&&& \langle A, Y \rangle + \langle a, y \rangle + a_0\rho =1 \\
&&&  \Tr\bigl( CYC^\intercal - Cyd^\intercal - dy^\intercal C^\intercal + \rho dd^\intercal \bigr) = 0.
\end{aligned}
\]
\end{corollary}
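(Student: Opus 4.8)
The plan is to reduce the corollary to Proposition~\ref{prop:frac-quad} by substituting the completely positive description of $\conv(\F)$ and then collapsing the two blocks $(y,Y)\in\rho\conv(\F)$ and $\rho\ge 0$ into the single conic membership $Z(\rho,y,Y)\in\cp_{n+1}$. First I would recall, as noted in the paragraph preceding the statement, that the quadratic fractional program~\eqref{eq:frac_quadratic} has the same optimal value as minimizing the linear function $b_0\rho + b^\intercal y + \langle B, Y\rangle$ over $\G$, and hence over $\conv(\G)$, since a linear objective attains the same infimum over a set and over its convex hull. Specializing Proposition~\ref{prop:frac-quad} to $\mathcal{L}=\{x\mid Cx=d\}$ already supplies the constraints $\langle A,Y\rangle+\langle a,y\rangle+a_0\rho=1$ and $\Tr(CYC^\intercal - Cyd^\intercal - dy^\intercal C^\intercal + \rho dd^\intercal)=0$, so the only task is to rewrite $(y,Y)\in\rho\conv(\F)$ with $\rho\ge 0$ as a completely positive constraint.

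The key step is to verify that, for $\X=\R^n_+$,
\[
\bigl\{(\rho,y,Y)\bigm| (y,Y)\in\rho\conv(\F),\ \rho\ge 0\bigr\} = \bigl\{(\rho,y,Y)\bigm| Z(\rho,y,Y)\in\cp_{n+1}\bigr\}.
\]
I would use $\conv(\F)=\{(x,X)\mid Z(1,x,X)\in\cp_{n+1}\}$, which holds for the non-negative orthant. For $\rho>0$, the convention $\rho\conv(\F)=\{\rho(x,X)\mid(x,X)\in\conv(\F)\}$ together with the positive homogeneity of the cone $\cp_{n+1}$ gives $(y,Y)\in\rho\conv(\F)\iff Z(1,y/\rho,Y/\rho)\in\cp_{n+1}\iff Z(\rho,y,Y)\in\cp_{n+1}$. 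For $\rho=0$, the convention sets $0\conv(\F)=\rec(\conv(\F))$; since $\conv(\F)$ is the preimage of the closed convex cone $\cp_{n+1}$ under the affine map $(x,X)\mapsto Z(1,x,X)$, it is closed, and its recession cone is the preimage of $\cp_{n+1}$ under the linear part $(y,Y)\mapsto Z(0,y,Y)$ (standard recession-cone calculus, cf.~\cite{rockafellar2015convex}). Hence $(0,y,Y)$ lies in the left-hand set iff $Z(0,y,Y)\in\cp_{n+1}$, matching the right-hand side. Finally, since the $(1,1)$ entry of any completely positive matrix is nonnegative, $Z(\rho,y,Y)\in\cp_{n+1}$ already forces $\rho\ge 0$, so the explicit sign constraint may be dropped.

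Combining these pieces, $\conv(\G)$ equals the feasible set of the stated copositive program, and minimizing $b_0\rho+b^\intercal y+\langle B,Y\rangle$ over it yields the claimed reformulation. I expect the $\rho=0$ case to be the main obstacle: it requires both the closedness of $\conv(\F)$ and the identification of its recession cone with the homogenized completely positive slice $\{(y,Y)\mid Z(0,y,Y)\in\cp_{n+1}\}$, so that the piecewise definition of $\rho\conv(\F)$ (positive scaling versus recession cone) glues seamlessly into one conic constraint. The remaining steps are essentially bookkeeping inherited from Proposition~\ref{prop:frac-quad}.
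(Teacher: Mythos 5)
Your proposal is correct and takes essentially the same route as the paper: the paper's proof simply substitutes the completely positive description $\conv(\F)=\bigl\{(x,X)\bigm| Z(1,x,X)\in\cp_{n+1}\bigr\}$ into Proposition~\ref{prop:frac-quad} and reads off the stated copositive program. Your extra verification that the scaled constraint $(y,Y)\in\rho\conv(\F)$, $\rho\ge 0$ collapses into the single conic membership $Z(\rho,y,Y)\in\cp_{n+1}$ (homogeneity for $\rho>0$, recession-cone calculus for $\rho=0$, and nonnegativity of the $(1,1)$ entry) is precisely the bookkeeping step the paper leaves implicit.
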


\begin{remark}
    Binary variables can be handled in this setup easily. We will rewrite $0\le x_i\le 1$ as $x_i+s_i=1$ with $x_i, s_i\ge 0$ . Assume that in the matrix $X$ the row (column) index of variable $x_i$ is $p$ and that of $s_i$ is $q$. All entries in a completely positive matrix and, therefore, $X_{pq}$ is non-negative. In other words, if we require $X_{pq}=0$, we are restricted to a face of the set of completely positive matrices. Since each extreme ray of the completely positive matrices is of the form $hh^\intercal$ for some $h\ge 0$, it follows that extreme rays of the cone that belong to this face satisfy that either $h_p$ or $h_q$ is zero. Moreover, since all extreme rays satisfy $h_p+h_q=1$, because $x_i+s_i=1$ is imposed as $\mathcal{L}$ is enforced in Proposition~\ref{prop:frac-quad}, it follows that each extreme ray, $h$, satisfies the binary conditions. \Halmos
\end{remark}

Second, we consider the case when $\X$ is a ball of radius $1$ centered at origin, that is $\{x\mid\|x\|_2\le 1\}$ where $\| \cdot \|_2$ is the Euclidean norm.  In this case, it follows from~\cite{sturm2003cones,burer2015gentle} that the convex hull of $\F$ can be described using the Shor's semidefinite programming (SDP) relaxation  and reformulation-linearization constraints, 
\[
\conv(\F) = \bigl\{(x,X) \bigm| Z = Z(1,x,X),\ Z \succeq 0,\ \langle L, Z \rangle \geq 0\bigr\},
\] 
where $L:= \Diag(1,-1, \ldots, -1)$. \revision{This, together with Proposition~\ref{prop:frac-quad}, yields a polynomial time solvable SDP formulation for~\eqref{eq:frac_quadratic}. When we specialize this formulation to the case where the affine space $\mathcal{L}$ is $\R^n$, we obtain a polynomial time solvable SDP formulation for optimizing the ratio of two quadratics over an ellipsoid, thus recovering  the result of  \cite{beck2009convex}. On the other hand, this result can be generalized by using results that convexify $\F$ when $\X$ is jointly defined by a ball and additional non-intersecting constraints~\cite{burer2015trust,ho2017second,yang2018quadratic,joyce2023convex}.}

Last, we use Proposition~\ref{prop:frac-quad} to derive tractable envelopes of bivariate quadratic fractional functions. More specifically, we consider the graph of the ratio of bivariate quadratic and linear functions  over a convex quadrilateral
\[
\S = \biggl\{ (t,x) \in \R \times  \R^2 \biggm| t = \frac{x^\intercal B x + b^\intercal x + b_0}{a^\intercal x +a_0},\ x \in \X \biggr\},
\]
where $\X$ is a convex quadrilateral in $\R^2$, and  $a^\intercal x +a_0$ is assumed to be positive over the domain $\X$. For the special case where the domain is a box in the positive orthant, the numerator is a non-negative linear function, and the denominator is a positive linear function,~\cite{tawarmalani2001semidefinite} relies on the linearity of the numerator to limit the domain of the function, and then derives the convex and concave envelope of the fraction. Special cases of these relaxations are used in mixed-integer nonlinear programming solvers~\cite{tawarmalani2005polyhedral,misener2014antigone,bestuzheva2023global} to relax factorable functions. We will now present an SDP formulation for the convex hull of $\S$. This generalizes the class of fractional functions to allow for a nonlinear numerator, a setting in which the convex extension argument does not apply. 

\begin{corollary}\label{cor:bivariate}
    Assume that $\X = \{x \mid Cx \leq d\}$ is a convex quadrilateral in the plane, and $a^\intercal x +a_0$ is positive on $\X$. Then, an SDP formulation for $\conv(\S)$ is given as follows
    \[
    \begin{aligned}
&t = b_0\rho + b^\intercal y + \langle B, Y \rangle \\    	
& x_i = a_1Y_{i1} + a_2 Y_{i2} + a_0y_i ,\  i =1,2 \\
& a_0\rho + a^\intercal y = 1 \\
& Z = Z(\rho,y,Y),\ Z \succeq 0 \\
& CYC^\intercal -Cyd^\intercal - dy^\intercal C^\intercal + \rho dd^\intercal \geq 0.
    \end{aligned}
    \]
\end{corollary}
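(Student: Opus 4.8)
The plan is to read Corollary~\ref{cor:bivariate} as the special case of Proposition~\ref{prop:frac-quad} in which the denominator is affine (its quadratic part vanishes, $A=0$) and there is no affine restriction on the domain ($\mathcal{L}=\R^2$, so the trace equation $\Tr(\cdots)=0$ disappears and the facet matrix $C$ of the quadrilateral enters only through $\conv(\F)$). First I would introduce the lifted variables $\rho = 1/q(x)$, $y = x/q(x)$, $Y = xx^\intercal/q(x)$ with $q(x)=a^\intercal x+a_0>0$, and verify the three displayed linear identities on $\G$: the objective linearizes to $t=b_0\rho+b^\intercal y+\langle B,Y\rangle$; multiplying out $x_i = x_i\,q(x)\,\rho$ gives the recovery equations $x_i = a_1Y_{i1}+a_2Y_{i2}+a_0y_i$; and $q(x)\rho=1$ becomes the normalization $a_0\rho+a^\intercal y=1$. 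Because convex hull commutes with the linear map $(\rho,y,Y)\mapsto(t,x)$ defined by these relations, it then suffices to describe $\conv(\G)$ and project onto $(t,x)$.

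Applying Proposition~\ref{prop:frac-quad} with $A=0$ and $\mathcal{L}=\R^2$ yields
\[
\conv(\G)=\bigl\{(\rho,y,Y)\bigm|(y,Y)\in\rho\conv(\F),\ a^\intercal y+a_0\rho=1,\ \rho\ge 0\bigr\},
\]
where $\F=\{(x,X)\mid x\in\X,\ X=xx^\intercal\}$ and $\X$ is the quadrilateral. The whole corollary then collapses to the single claim
\[
\conv(\F)=\bigl\{(x,X)\bigm| Z(1,x,X)\succeq 0,\ CXC^\intercal-Cxd^\intercal-dx^\intercal C^\intercal+dd^\intercal\ge 0\bigr\},
\]
with the matrix inequality read entrywise. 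Homogenizing this description by $\rho$ (replace $(x,X)$ by $(y,Y)/\rho$ and clear denominators) converts $(y,Y)\in\rho\conv(\F)$ into precisely $Z(\rho,y,Y)\succeq 0$ together with $CYC^\intercal-Cyd^\intercal-dy^\intercal C^\intercal+\rho dd^\intercal\ge 0$, reproducing the constraints of the corollary. Boundedness of $\X$ forces $\rec(\conv(\F))=\{0\}$, so the normalization rules out $\rho=0$ and the relaxation of $\rho>0$ to $\rho\ge 0$ adds no spurious points, which is why no closure operation appears.

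The main obstacle is the remaining claim itself: the \emph{exactness} of the Shor-plus-RLT relaxation of $\{(x,xx^\intercal)\mid x\in\X\}$ over a convex quadrilateral. The inclusion $\subseteq$ is routine, since each generator satisfies $Z(1,x,xx^\intercal)\succeq 0$ and the RLT products $(d-Cx)(d-Cx)^\intercal\ge 0$, and both families are convex. The reverse inclusion is the hard part: a pair $(x,X)$ satisfying Shor and RLT must be realized as the first two moments of a probability measure supported on $\X$. I would establish this by reducing the quadrilateral to a reference domain on which exactness is already known — the unit box or a triangle, available from \cite{anstreicher2010computable} (see also \cite{burer2015gentle}). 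Since the Shor and RLT systems are covariant under affine maps of $x$, a parallelogram reduces at once to the box; for a general convex quadrilateral I would send its four vertices to those of the box by a projective transformation of the kind underlying Theorem~\ref{thm:fractionalmain} and track how $Z\succeq 0$ and the RLT products transform, using that the transformed second-moment data again solves a Shor-plus-RLT system on the reference quadrilateral. Verifying that this projective change of variables preserves exactness — that the PSD and nonnegativity certificates are carried over faithfully — is where the real work lies, while the rest is bookkeeping through Proposition~\ref{prop:frac-quad}.
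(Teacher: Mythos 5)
Your reduction is exactly the paper's: you lift $\S$ as a linear image of $\G$, invoke Proposition~\ref{prop:frac-quad} with an affine denominator (quadratic part $A=0$) and $\mathcal{L}=\R^2$, and thereby reduce the corollary to the single claim that the Shor-plus-RLT system describes $\conv(\F)$ for $\F=\bigl\{(x,xx^\intercal)\bigm| Cx\le d\bigr\}$ over a convex quadrilateral. That part of the bookkeeping, including ruling out $\rho=0$ via boundedness of $\F$, is correct and matches the paper's argument.

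The gap is at the exactness claim itself. The paper disposes of it in one line by citing Theorem 2 of \cite{burer2015gentle} (building on \cite{anstreicher2010computable}), which states precisely that for a planar polytope with at most four facets --- i.e., your quadrilateral --- the condition $Z(1,x,X)\succeq 0$ together with the RLT inequalities gives $\conv(\F)$. You instead assume the result is available only for the box or triangle and propose to transfer it to a general quadrilateral by a projective change of variables, and you yourself flag that verifying this transfer ``is where the real work lies.'' That step is never carried out, and it is genuinely nontrivial: a projective map $\psi$ sends $(1,x,xx^\intercal)$ to $\bigl(1,\psi(x),\psi(x)\psi(x)^\intercal\bigr)$ only after division by $(c^\intercal x+c_0)^2$, so the relevant denominator is quadratic in $x$ (although it is linear in the entries of $(1,x,xx^\intercal)$, so Theorem~\ref{thm:fractionalmain} could in principle be applied with that choice of $\alpha$); one would then still have to show that the homogenized PSD and RLT certificates for the square map \emph{onto} exactly the PSD and RLT system of the quadrilateral, not merely into some valid relaxation of it. Since the quadrilateral case is itself the literature result the paper cites, your proposal is incomplete at its central step where a direct citation would have sufficed.
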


\section{Multiple denominators}

\revision{In this section, we treat the case where there are multiple denominator expressions. In Section~\ref{section:univariatefractional}, we consider the univariate case in two settings. First, we treat the case where the variable is raised to various powers in the denominator, and second, we consider the case where expressions in the denominator involve shifts of this variable by different constants. In each case, we derive convex hull formulation using the moment-hull characterization. In Section~\ref{section:general-linear-frac} we consider the multivariate case for $0-1$ variables, where multiple linear expressions form denominators. In this case, we develop a hierarchy of relaxations converging to the convex hull. }

\subsection{Univariate case: convex hull formulation}\label{section:univariatefractional}
 \def \M {\mathcal{M}}

In this section, we are interested in developing convex hulls for:
\[
\G_{p,q} = \biggl\{\Bigl(\frac{1}{x^p},  \frac{1}{x^{p-1}}, \cdots,\frac{1}{x}, 1, x, \cdots, x^{q}\Bigr) \biggm| x\in \X  \biggr\},
\]
and
\[
\G_{r} = \biggl\{\Bigl(1,\frac{1}{x-r_1}, \ldots, \frac{1}{x-r_n}, x-r_0 \biggm| x \in \X\biggr\},
\]
where $p,q\in \Z_+$, $r\in \R^n$ is a vector such that $r_1<...<r_n$, $\X$ is a closed set such that none of the denominators attain zero.
The first result generalizes the moment hull to allow fractional terms. This is useful to simultaneously relax fractions and powers instead of the current practice of relaxing them separately. \revision{The set $\G_r$ occurs in chemical process design problems where Underwood equations are modeled using functions in $\G_r$ and $r_i$ are relative volatilities of components in a chemical mixture~\cite{underwood1949fractional}.} 

We will relate the convex hulls of $\G_r$ and $\G_{p,q}$ to the convex hull of the moment curve which is described as follows. Let $d \in \Z_+$ and $\X$ be a subset of $\R$. Then, the moment curve in $\R^{d+1}$ with support $\X$ is defined as follows 
\[
\mathcal{M}_d = \bigl\{(1, x, x^2, \ldots, x^d) \bigm| x \in \X \bigr\}. 
\]
When $\X$ is a discrete set (resp. an interval) the convex hull of $\M_d$ is referred to as a \textit{cyclic polytope} (resp. \textit{moment hull}). The Gale evenness condition in~\cite{gale1963neighborly}  provides a characterization of facets of a cyclic polytope,~\cite{bogomolov2015small} provides an extended formulation for a cyclic polytope, and ~\cite{fawzi2016sparse} provides a polynomial-sized SDP formulation for a certain class of cyclic polytopes. The moment hull is SDP representable as follows, see Section 3.5.4 in~\cite{blekherman2012semidefinite}. Suppose that $\X=[a,b]$, and let $H:\R^{2d+1} \to \mathbb{S}^{d+1}$ be a linear transformation that maps $\mu = (\mu_0,\mu_1, \ldots, \mu_{2d})$ to a Hankel matrix $H(\mu)$, that is
\[
H(\mu) = \begin{pmatrix}
    \mu_0 & \mu_1 & \mu_2 & \cdots & \mu_d \\
    \mu_1 & \mu_2 & \mu_3 &\cdots & \mu_{d+1} \\
    \mu_2 & \mu_3 & \mu_4 &\cdots & \mu_{d+1} \\
    \vdots & \vdots & \vdots & \ddots & \vdots \\
    \mu_d & \mu_{d+1} & \mu_{d+2} &\cdots & \mu_{2d} 
\end{pmatrix}.
\]
If $d$ is odd, $(\mu_0, \mu_1, \ldots, \mu_d) \in  \conv(\M_d)$  if and only if
\[
\begin{aligned}
H(\mu_1, \ldots,\mu_{d} ) - aH(\mu_0, \ldots,\mu_{d-1}) &\succeq 0 \\
bH(\mu_0, \ldots,\mu_{d-1} ) - H(\mu_1, \ldots, \mu_d) &\succeq 0 \\
\mu_0&=1.
\end{aligned}
\]
If $d$ is even, $(\mu_0, \mu_1, \ldots, \mu_d) \in  \conv(\M_d)$  if and only if
\[
\begin{aligned}
H(\mu_0,\ldots, \mu_{d})&\succeq 0 \\
-H(\mu_2, \ldots, \mu_{d}) +(a+b) H(\mu_1,  \ldots, \mu_{d-1}) - ab H(\mu_0,  \ldots, \mu_{d-2}) &\succeq 0 \\
\mu_0&=1.
\end{aligned}
\]
In our study, we will use the homogenization of $\conv(\M_d)$, that is, the conic hull of $\M_d$, denoted as $\cone(\M_d)$. An explicit description of $\cone(\M_d)$ is obtained by dropping the constraint $\mu_0 = 1$ in a polyhedral/SDP representation of $\conv(\M_d)$. 

We start by establishing that convexifying the moment curve and the curve $\G_{p,q}$ are equivalent. The proof of this result follows directly from Theorem~\ref{thm:fractionalmain}. 
\begin{corollary}
    Let $p,q \in \Z_+$, and assume that $x^p>0$ for every $x \in \X$. Then,
    \[
    \begin{aligned}
        \conv(\G_{p,q}) &= \bigl\{ (\nu_0,\nu_1, \ldots, \nu_{p+q})\bigm| \nu \in  \cone(\mathcal{M}_{p+q}),\ \nu_p = 1  \bigr\} \\
        \conv(\M_{p+q}) &= \bigl\{ (\mu_0,\nu_1, \ldots, \mu_{p+q}) \bigm| \mu \in  \cone(\G_{p,q}),\ \mu_0 = 1  \bigr\}.
    \end{aligned}
    \]
\end{corollary}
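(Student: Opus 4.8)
The plan is to recognize $\G_{p,q}$ as nothing more than the moment curve $\mathcal{M}_{p+q}$ with every coordinate divided by its $x^p$-entry, and then to quote Theorem~\ref{thm:fractionalmain} verbatim. Concretely, I would set $f(x)=(1,x,x^2,\ldots,x^{p+q})$, so that $\F=\mathcal{M}_{p+q}$ in the notation of \eqref{eq:defineFG}, and choose $\alpha$ to be the coordinate vector that selects the entry $x^p$, i.e.\ $\alpha^\intercal f(x)=x^p$. The one elementary observation driving everything is that
\[
\frac{f(x)}{\alpha^\intercal f(x)}=\frac{(1,x,\ldots,x^{p+q})}{x^p}=\Bigl(\frac{1}{x^p},\ldots,\frac{1}{x},1,x,\ldots,x^q\Bigr),
\]
so the set $\G$ of \eqref{eq:defineFG} coincides exactly with $\G_{p,q}$. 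Under this identification the two claimed equalities are precisely \eqref{eq:FprimeConv} and \eqref{eq:FConv}: the constraint $\alpha^\intercal g=1$ becomes $\nu_p=1$, while the constraint $f_1=1$ becomes $\mu_0=1$ because the leading entry of the moment curve is the constant function $f_1(x)\equiv 1$. The only rewriting needed is to express the homogenization $\{\,g\mid \exists\,\rho\ge 0,\ g\in\rho\conv(\F)\,\}$ as the closed conic hull $\cone(\mathcal{M}_{p+q})$, and symmetrically $\{\,f\mid\exists\,\sigma\ge 0,\ f\in\sigma\conv(\G)\,\}$ as $\cone(\G_{p,q})$, which matches the statement's use of $\cone(\cdot)$.

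Before invoking the theorem I would check its hypotheses. The identity $f_1(x)\equiv 1$ supplies the extra assumption required for \eqref{eq:FConv}. For the uniform positivity condition I would combine the standing assumption $x^p>0$ on $\X$ with continuity of $x\mapsto x^p$ and compactness of $\X$ — which holds in both settings of interest, a finite support (yielding a cyclic polytope) and a compact interval $[a,b]$ (yielding the moment hull) — to obtain an $\epsilon>0$ with $x^p>\epsilon$. Compactness simultaneously gives boundedness of $\F=\mathcal{M}_{p+q}$, which is exactly what \eqref{eq:FprimeConv} needs. With these checks in place, the first equality is a transcription of \eqref{eq:FprimeConv} and the second of \eqref{eq:FConv}, and no computation remains.

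The step I expect to be the only genuine subtlety is the role of boundedness together with the passage from the $\rho\conv(\F)$ form of Theorem~\ref{thm:fractionalmain} to the closed conic hull in the statement. As Example~\ref{example:boundedness} already warns, if $\X$ (hence $\F$) were unbounded, the recession directions of $\cone(\mathcal{M}_{p+q})$ would contribute spurious points on the slice $\nu_p=1$ that do not belong to $\conv(\G_{p,q})$, and the first equality would fail; boundedness is precisely what forces $\{\,\rho\conv(\F)\mid\rho\ge 0\,\}$ to coincide with $\cone(\F)$ and eliminates these extraneous points. Everything else is a direct application of the already-established correspondence.
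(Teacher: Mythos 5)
Your proposal is correct and follows essentially the same route as the paper, which obtains this corollary as a direct application of Theorem~\ref{thm:fractionalmain} with $f(x)=(1,x,\ldots,x^{p+q})$ (so $\F=\mathcal{M}_{p+q}$) and $\alpha$ selecting the $x^p$ coordinate, the two displayed equalities being transcriptions of \eqref{eq:FprimeConv} and \eqref{eq:FConv}. Your explicit verification that compactness of $\X$ (finite support or $[a,b]$) is what supplies both the uniform bound $x^p>\epsilon$ and the boundedness of $\F$ required by the theorem is a point the paper leaves implicit, since its standing assumption that $\X$ is merely closed would not by itself suffice.
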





Next, we consider $\G_r$ defined in the beginning of this section, which we refer to as $\G$ by dropping the subscript for the remainder of this section.
In the following, we show that the convex hull of $\G$ is isomorphic to the convex hull of $\M_{n+1}$. Our proof consists of two  steps. Using partial fraction decomposition and Theorem~\ref{thm:mainthem}, we will establish an isomorphism between the convex hull of $\G$ and that of the following curve given by polynomials, 
\[
\F = \Bigl\{\bigl(f_0(x), f_1(x), \ldots, f_{n+1}(x) \bigr) \Bigm| x \in \X \Bigr\},
\]
where $f_{0}(x)= \prod_{j =1}^n(x-r_j)$,  for $i \in [n]$, $f_i(x) = \prod_{j \neq i,0}(x-r_j)$,  and  $f_{n+1}(x)= \prod_{j =0}^n(x-r_j)$. To use this result, we first establish an isomorphism between $\F$ and the moment curve $\M_{n+1}$ in the following lemma. 
\begin{lemma}\label{lemma:change-basis}
    Let $r_{n+1}$ be a real number not in $\{r_0, \ldots, r_{n} \}$, and consider an $(n+2) \times (n+2)$ matrix $T$ with entries $(\beta_{ij})_{i,j=0}^{n+1}$, where for each row $i \in \{0\} \cup [n+1]$ 
    \[
    \beta_{ij} = \begin{dcases}
        \frac{(r_j)^i}{f_j(r_j)} &  j \in [n] \\
         \frac{(r_{0})^i-\sum_{k=1}^n \beta_{ik}f_k(r_{0})}{f_0(r_{0})} & j = 0 \\
        \frac{(r_{n+1})^i - \sum_{k =0}^n\beta_{ik}f_k(r_{n+1})}{f_{n+1}(r_{n+1})} &j = n+1.
    \end{dcases}
    \]
    Then, $\mathcal{M}_{n+1} = T \F$, and $\F = T^{-1}\M_{n+1}$. 
\end{lemma}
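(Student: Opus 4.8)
The plan is to reduce the two matrix identities $\mathcal{M}_{n+1} = T\F$ and $\F = T^{-1}\mathcal{M}_{n+1}$ to a single family of polynomial identities, one for each row of $T$. Reading off row $i \in \{0\}\cup[n+1]$ of $\mathcal{M}_{n+1} = T\F$, the claim is exactly
\[
\sum_{j=0}^{n+1}\beta_{ij}\,f_j(x) = x^i \qquad \text{for all } x,
\]
so the whole lemma amounts to verifying this identity for each $i$ and then arguing that $T$ is invertible.

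First I would record the degrees: $f_1,\dots,f_n$ have degree $n-1$, $f_0$ has degree $n$, and $f_{n+1}$ has degree $n+1$, each with a nonzero leading coefficient. A short triangular argument (the degree-$(n-1)$ polynomials $f_1,\dots,f_n$ are Lagrange-type and linearly independent, adjoining $f_0$ reaches degree $\le n$, adjoining $f_{n+1}$ reaches degree $\le n+1$) shows that $\{f_0,\dots,f_{n+1}\}$ is a basis of the $(n+2)$-dimensional space of polynomials of degree at most $n+1$. Consequently both sides of the displayed identity are polynomials of degree at most $n+1$, and it suffices to check that they agree at the $n+2$ distinct nodes $r_0,r_1,\dots,r_{n+1}$; their distinctness is precisely what makes the $\beta_{ij}$ well defined.

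The verification then splits into three cases, and this is where the three branches of the definition of $\beta_{ij}$ come into play. The key vanishing facts are that $f_j(r_\ell)=0$ whenever $\ell\in[n]$ and $\ell\neq j$, that $f_0(r_\ell)=0$ for $\ell\in[n]$, and that $f_{n+1}(r_\ell)=0$ for $\ell\in\{0\}\cup[n]$. For $\ell\in[n]$ only the $j=\ell$ term survives and $\beta_{i\ell}f_\ell(r_\ell)=(r_\ell)^i$ by the first branch. For $\ell=0$ the term $f_{n+1}(r_0)$ vanishes, and the second branch is designed so that $\beta_{i0}f_0(r_0)$ exactly cancels $\sum_{k=1}^n\beta_{ik}f_k(r_0)$ and leaves $(r_0)^i$. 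For $\ell=n+1$ the third branch is designed so that $\beta_{i,n+1}f_{n+1}(r_{n+1})$ cancels $\sum_{k=0}^n\beta_{ik}f_k(r_{n+1})$ and leaves $(r_{n+1})^i$. In every case the common value is $x^i$ evaluated at the node, so the two degree-$\le(n+1)$ polynomials coincide identically.

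Finally, having established $x^i=\sum_j\beta_{ij}f_j(x)$ for every $i$, the matrix $T=(\beta_{ij})$ is the change-of-basis matrix expressing the monomial basis $\{1,x,\dots,x^{n+1}\}$ in the basis $\{f_0,\dots,f_{n+1}\}$; since both are bases, $T$ is invertible, and applying $T^{-1}$ yields $\F=T^{-1}\mathcal{M}_{n+1}$ at once. I expect the only real obstacle to be bookkeeping: correctly tracking which $f_j$ vanish at which node, and confirming that the residually defined entries $\beta_{i0}$ and $\beta_{i,n+1}$ are tailored exactly to make the evaluations at $r_0$ and $r_{n+1}$ telescope. No genuine difficulty beyond this careful accounting is anticipated.
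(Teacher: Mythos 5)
Your proposal is correct and follows essentially the same route as the paper's proof: verify the identity $x^i = \sum_{j=0}^{n+1}\beta_{ij}f_j(x)$ at the $n+2$ distinct nodes $r_0,\ldots,r_{n+1}$ using the vanishing structure of the $f_j$ (which is exactly what the three branches of $\beta_{ij}$ are built for), extend it to all $x$ by uniqueness of polynomials of degree at most $n+1$ agreeing at $n+2$ points, and conclude $\mathcal{M}_{n+1}=T\F$ together with invertibility of $T$ as a change-of-basis matrix. The only cosmetic difference is that you establish that $\{f_0,\ldots,f_{n+1}\}$ is a basis upfront via a triangular degree argument, whereas the paper deduces it afterwards by a counting argument once the monomials are shown to lie in the span of the $f_j$.
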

We first assume that $\prod_{i=1}^n (x-r_i) > 0$ for all $x\in \X$. The case with $\prod_{i=1}^n (x-r_i) < 0$ can be handled similarly by considering $-f_i(x)$ for some $i$ instead. The applications described above satisfy that $\prod_{i=1}^n(x-r_i)$ is sign-invariant over $\X$.
\begin{proposition}\label{prop:momenthull}
    Assume $r_0,r_1,\ldots,r_n$ are distinct reals such that $\prod_{i=1}^n(x-r_i) > 0$. Then, 
    \[
    \begin{aligned}
        \conv(\G) & = \bigl\{\nu = (\nu_0, \nu_1, \ldots, \nu_{n+1})  \bigm| T\nu \in \cone(\mathcal{M}_{n+1}),\ \nu_0=1 \bigr\}\\
        \conv(\mathcal{M}_{n+1}) & = \bigl\{ \mu = (\mu_0, \mu_1, \ldots, \mu_{n+1})  \bigm| T^{-1}\mu \in  \cone(\G),\ \mu_0=1 \bigr\}.
    \end{aligned}
    \]
\end{proposition}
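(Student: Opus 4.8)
The plan is to reduce the claim about $\G$ to a statement about the polynomial curve $\F$, and then transport it to the moment curve $\M_{n+1}$ through the linear isomorphism $T$. First I would record the partial-fraction identity that underlies the definition of $\F$: since $f_i(x) = f_0(x)/(x-r_i)$ for $i\in[n]$ and $f_{n+1}(x) = (x-r_0)f_0(x)$, dividing the vector $f(x) = \bigl(f_0(x),\ldots,f_{n+1}(x)\bigr)$ by its first coordinate $f_0(x) = \prod_{j=1}^n(x-r_j)$ yields exactly $\bigl(1,\frac{1}{x-r_1},\ldots,\frac{1}{x-r_n},x-r_0\bigr)$, a generic point of $\G$. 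Hence $\G = \bigl\{f(x)/f_0(x)\mid x\in\X\bigr\}$, which is precisely the set $\G$ of Theorem~\ref{thm:fractionalmain} associated with the base family $\F$ and the linear form $\alpha = (1,0,\ldots,0)$, so that $\alpha^\intercal f(x) = f_0(x)$; the hypothesis $\prod_{i=1}^n(x-r_i) > 0$ guarantees this denominator is positive on $\X$.

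Next I would invoke Theorem~\ref{thm:fractionalmain} (whose proof is built on the projective transform of Theorem~\ref{thm:mainthem}) with this $\alpha$. Its conclusion~\eqref{eq:FprimeConv} reads $\conv(\G) = \bigl\{\nu\mid \exists\rho\geq 0,\ \nu\in\rho\conv(\F),\ \alpha^\intercal\nu = 1\bigr\}$. The union $\bigcup_{\rho\geq 0}\rho\conv(\F)$ is the convex conic hull $\cone(\F)$, and $\alpha^\intercal\nu = \nu_0$, so this says $\conv(\G) = \cone(\F)\cap\{\nu_0 = 1\}$; concretely, one homogenizes $\conv(\F)$ into a cone and slices it with the hyperplane $\nu_0 = 1$.

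Finally I would transport this to the moment curve. Lemma~\ref{lemma:change-basis} gives $\M_{n+1} = T\F$ with $T$ invertible, and since conic hulls commute with invertible linear maps, $\cone(\M_{n+1}) = T\cone(\F)$, equivalently $\cone(\F) = T^{-1}\cone(\M_{n+1})$. Substituting into $\conv(\G) = \cone(\F)\cap\{\nu_0 = 1\}$ gives the first displayed identity $\conv(\G) = \{\nu\mid T\nu\in\cone(\M_{n+1}),\ \nu_0 = 1\}$. For the second identity I would use that every point of $\G$ is the positive multiple $\frac{1}{f_0(x)}$ of the corresponding point of $\F$ (and conversely), whence $\cone(\G) = \cone(\F)$; combined with the standard homogenization $\conv(\M_{n+1}) = \cone(\M_{n+1})\cap\{\mu_0 = 1\}$ (valid because every point of $\M_{n+1}$ has first coordinate $1$) and $\cone(\M_{n+1}) = T\cone(\G)$, this yields $\conv(\M_{n+1}) = \{\mu\mid T^{-1}\mu\in\cone(\G),\ \mu_0 = 1\}$.

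The main obstacle I anticipate is not the algebra but the boundary bookkeeping: because $\X$ is only assumed closed, $\F$ and $\M_{n+1}$ may be unbounded, so I must ensure that slicing a conic hull by the hyperplane $\{\text{first coord} = 1\}$ returns the convex hull \emph{exactly} rather than its closure, i.e. that the relevant cones acquire no spurious recession directions lying in that hyperplane. I would handle this either by appealing to the closed-cone/recession-cone refinement behind~\eqref{eq:FprimeClConv} in Theorem~\ref{thm:fractionalmain}, or, where boundedness of $\X$ is available, by noting that $\conv(\F)$ is then compact so that $\cone(\F)$ is closed and the degenerate case $0\cdot\conv(\F) = \rec(\conv(\F)) = \{0\}$ does not spoil the union $\bigcup_{\rho\geq 0}\rho\conv(\F)$. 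The only remaining point needing a line of care is the routine verification, implicit in Lemma~\ref{lemma:change-basis}, that $T$ is genuinely invertible, which legitimizes $\cone(\F) = T^{-1}\cone(\M_{n+1})$.
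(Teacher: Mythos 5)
Your proof is correct, and for the first identity it is essentially the paper's own argument: invoke Theorem~\ref{thm:fractionalmain} (in the form~\eqref{eq:FprimeConv}) with $\alpha=(1,0,\ldots,0)$ so that $\alpha^\intercal f(x)=f_0(x)$, then transport through the invertible map $T$ of Lemma~\ref{lemma:change-basis}, using that convex/conic hulls commute with linear maps. Where you genuinely diverge is the second identity. The paper proves it by a second pass through the projective machinery: it uses a partial-fraction decomposition to write $1=\sum_{i\in[n]}\alpha_i f_i(x)$, identifies these $\alpha_i$ with the entries $\beta_{0i}$ of the $0$th row of $T$, expresses $\Phi(\F)$ as a linear image of $\G$, applies Theorem~\ref{thm:mainthem} to obtain the intermediate description $\conv(\F)=\bigl\{f \bigm| \sum_{i\in[n]}\alpha_i f_i=1,\ f\in\cone(\G)\bigr\}$, and only then pushes this through $T$, using $\langle\beta_{0},T^{-1}\mu\rangle=\mu_0$ to turn the partial-fraction hyperplane into $\{\mu_0=1\}$. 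You bypass all of this: since corresponding generators of $\G$ and $\F$ differ by the positive factor $f_0(x)$, you get $\cone(\G)=\cone(\F)$ outright, and then combine the trivial homogenization identity $\conv(\mathcal{M}_{n+1})=\cone(\mathcal{M}_{n+1})\cap\{\mu_0=1\}$ (valid because every point of the moment curve has first coordinate $1$) with $\cone(\mathcal{M}_{n+1})=T\cone(\F)$. This is shorter, avoids partial fractions entirely, and sidesteps the fact that~\eqref{eq:FConv} is not directly applicable here (the first base function $f_0$ is not identically $1$); what the paper's longer route buys is an explicit convex-hull description of $\F$ itself as a slice of $\cone(\G)$, which is of independent interest but not needed for the stated result. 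One caveat is shared by both arguments: Theorem~\ref{thm:fractionalmain} formally requires $\F$ bounded and $f_0$ uniformly bounded away from zero, while the section only assumes $\X$ closed; the paper's proof silently assumes this, whereas you flag it explicitly, and your proposed remedies (fall back on the closure refinement~\eqref{eq:FprimeClConv}, or use compactness of $\X$ so that $\cone(\F)$ is closed and the $\rho=0$ term contributes only the origin) are exactly what is needed.
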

The assumption that $\prod_{i=1}^n(x-r_i)$ does not change sign can be relaxed as follows.  
We can split $\X$ into two subsets, one where the number of $r_i$ values larger than $x$ is even and another where this number is odd.
Then, since $\prod_{i=1}^n (x-r_i)$ is sign-invariant over these subsets, the convex hull of their union can be obtained using \revision{disjunctive programming~\cite{balas1998disjunctive}}.
%

\subsection{Multivariate case: a hierarchy of relaxations}\label{section:general-linear-frac}
Consider the following problem:
\begin{equation}\label{eq:sumlinearfrac}
   \max  \biggl\{ \sum_{i \in [m]} \frac{b_{i0} + b_i^\intercal x }{a_{i0}+a_i^\intercal x} \biggm|x \in \X \subseteq \{0,1\}^n \biggr\},
\end{equation}
where $m,n\in \Z_+$, $a_i,b_i\in \R^n$, $a_{i0},b_{i0}\in \R$, and $a_{i0}+ a_i^\intercal x > 0$ for every $x \in \X$. Assume $\X$ can be described using a system of linear inequalities, that is, $\X = \{x \mid Cx \geq d\} \cap  \{0,1\}^n $, where $C$ is \revision{an} $r \times n$ matrix and $d$ is an $r$-dimensional vector. Clearly, this maximization problem is equivalent to maximizing  $ \sum_{i \in [m]}b_{i0}\rho^i + b_i^\intercal y^i$ over the following set
\[
 \G:=\biggl\{(\rho,y) \biggm| x \in \X,\ (\rho^i,y^i) =  \frac{(1,x)}{a_{i0}+a_i^\intercal x} \text{ for } i \in [m] \biggr\},
\]
where $\rho = (\rho^1, \ldots, \rho^m)$ and $y = (y^1, \ldots, y^m)$. In this section, we present a hierarchy of convex outer-approximations for $\G$. For $k \in [n]$, the $k^\text{th}$ level relaxation of $\G$ is related to the $k^\text{th}$ level RLT of the feasible region $\X$, denoted as $\RLT_k(\X)$. The relaxation $\RLT_k(\X)$ is defined as the system of linear inequalities obtained by linearizing the following system of  polynomial inequalities using the relation $x_i = x_i^2$ and representing the monomial $\prod_{i \in e}x_i$ with an introduced variable $z_e$ after distributing products:
\[
(Cx-d)\prod_{i\in S}x_i\prod_{j\in T}(1-x_j) \geq 0 \text{ for } S \cap T = \emptyset \text{ with } \vert S \cup T\vert = k.
\]

We begin with a base relaxation  using ideas in Section~\ref{section:linear_frac_poly}. Consider the following lifting $\G^1$ of $\G$ in the space of $(\rho,y,x)$ variables, that is
\[
\G^1 = \biggl\{(\rho,y,x) \biggm| x \in \X ,\ (\rho^i,y^i) =  \frac{(1,x)}{a_{i0}+a_i^\intercal x} \text{ for } i \in [m] \biggr\}. 
\]
More generally, $\G^k$ will denote the set used to construct the $k^{\text{th}}$ level relaxation in our proposed hierarchy. Then, we project $\G^1$ into $m$ subspaces, where for $i \in [m]$, $\G^1_i$ denotes the projection of $\G^1$ onto the space of $(\rho^i,y^i,x)$ variables.
To relax each projection, we introduce variable $w^i_{jr}$ to linearize $\frac{x_jx_r}{a_{i0}+a_i^\intercal x}$, and construct the following relaxation:
\[
   \begin{aligned}
       \mathcal{R}^1_i =  \biggl\{(\rho^i,y^i,x) \biggm|  (y^i,w^i) &  \in \rho^i \RLT_{1}(\X),\ a_{i0}\rho^i + a_i^\intercal y^i = 1,\ \rho^i\geq 0, \\
       &x_j = (a_{i0}+a_{ij})y^i_{j} + \sum_{r \neq j}a_{ir}w^i_{jr} \text{ for } j \in [n]    \biggr\},
   \end{aligned}
\]
where the four linear constraints are obtained by linearizing the following constraints, respectively, 
\[
\begin{aligned}
    \frac{(x,xx^\intercal)}{a_{i0}+a_i^\intercal x} \in \frac{1}{a_{i0}+a_i^\intercal x}\RLT_1(\X) & \\
    \frac{a_{i0}}{a_{i0}+a_i^\intercal x} +\sum_{r \in [m]}\frac{a_{ir}x_{r}}{a_{i0}+a_i^\intercal x} = 1 & \\
    \frac{1}{a_{i0}+a_i^\intercal x} \ge 0 & \\
    x_j = \frac{(a_{i0}+a_{ij})x_j +\sum_{r\ne j} a_{ir}x_jx_r}{a_{i0}+a_i^\intercal x} .&
\end{aligned}
\]
\revision{Now, the base relaxation $\mathcal{R}^1$ is the intersection of relaxations $\mathcal{R}^1_i$, that is }
\[
\mathcal{R}^1 = \bigl\{(\rho,y,x) \bigm| (\rho^i,y^i,x) \in \mathcal{R}^1_i \text{ for } i \in [m] \bigr\}.
\]
We motivate our relaxation hierarchy by first considering a simple way to tighten $\mathcal{R}^1$, which does not yield the convex hull of $\G$. This relaxation replaces $\RLT_1(\X)$ in the definition of $\mathcal{R}^1_i$ with the $n^{\text{th}}$-level RLT relaxation of $\X$. 

\begin{example}
    Consider the case where $m=2$ and $n=2$. Let $a_{10} = 1$ and $a_1 = (2,4)$, and $a_{20} = 1$ and $a_2 = (3,5)$. In this case, the set $\G$ of our interest consists of the following  four points listed in the space of variables   $(\rho^1,y^1_1,y^1_2,\rho^2,y^2_1,y^2_2)$:
    \[
\Bigl(1,0,0,1,0,0\Bigr) \quad \Bigl(\frac{1}{3},\frac{1}{3},0,\frac{1}{4},\frac{1}{4},0\Bigr) \quad \Bigl(\frac{1}{5},0,\frac{1}{5},\frac{1}{6},0,\frac{1}{6}\Bigr) \text{ and } \Bigl(\frac{1}{7},\frac{1}{7} ,\frac{1}{7},\frac{1}{9}, \frac{1}{9} ,\frac{1}{9}\Bigr).
    \]
It can be verified that the linear inequality $25\rho^1 - 4y^1_1 - 24\rho^2  \leq  1$ is valid for $\G$. However, this inequality is not valid for the relaxation obtained by individually convexifying each linear fractional function. To see this, let  
    $
    \G^1_i = \bigl\{(\rho^i,y^i,x) \bigm| (\rho^i,y^i) = \frac{(1,x)}{1+  a_i^\intercal x },\ x \in \{0,1\}^2 \bigr\}
    $.
 Now, maximizing $ 25\rho^1 - 4y^1_1 - 24\rho^2 $ over $\{(\rho,y,x)\mid (\rho^i,y^i,x) \in \conv(\mathcal{G}^1_i) \; \text{ for } i = 1,2\}$ yields $9$, showing that intersecting $\conv(\G^1_i)$ does not yield $\conv(\G)$. \Halmos
\end{example}
This discrepancy occurs because we have not exploited that $(y^i,w^i) \in \rho^i \RLT_n(\X)$ and $(y^j,w^j) \in \rho^j \RLT_n(\X)$ are scalar multiples of one another. To exploit this, first observe that the convex hull of $U$ is a simplex, where
\[
 U = \Bigl\{u \Bigm| x \in \X,\ u_S = \prod_{j \in S}x_j \text{ for } \emptyset \neq S\subseteq [n] \Bigr\}. 
\]
Therefore, for any point in the convex hull, there is a unique set of barycentric coordinates (convex multipliers associated with the extreme points of $\conv(U)$.) Since $(y^i,w^i)$ scales to $(y^j,w^j)$, for all $i\ne j$, we write:
\[
u_S = \frac{\bigl(a_{i0}+\sum_{j\in S}a_{ij}\bigr)\prod_{t \in S}x_t +\sum_{r\not\in S} a_{ir}\prod_{t \in S\cup\{r\}}x_t}{a_{i0}+a_i^\intercal x},
\] 
and introduce variable $w^i_S$ to represent $\frac{\prod_{j \in S}x_j}{a_{i0}+a_i^\intercal x}$. 
For each $i\in [m]$ and $S\subseteq [n]$, the above equality describes a linear relation between $u_S$ and $w^i_S$. Moreover, every function of $0\mathord{-}1$ variables, and, therefore, $w^i_{\bar{S}}$, can be expressed as a linear combination of $(u_{S})_{S\subseteq [n]}$.

Formally, for each $2 \leq k \leq n$, we consider a lift of $\G$ given as follows,
\[
\begin{aligned}
\G^k = \biggl\{ (\rho,y,u) \biggm|  (\rho^i,y^i)& =  \frac{(1,x)}{a_{i0}+a_i^\intercal x} \text{ for } i \in [m], x \in \X,\ \\
& u_S = \prod_{j\in S}x_j \text{ for } S \subseteq [n] \text{ with } 1\leq  \vert S \vert \leq k \biggr\}.
\end{aligned}
\]
For each $i \in [m]$, let $\G^k_i$ denote the projection of $\G^k$ onto the space of $(\rho^i,y^i,u)$ variables. A relaxation of $\G^k_i$ is given as follows
    \[
    \begin{aligned}
        \mathcal{R}^k_i &= \biggl\{(\rho^i,y^i,u) \biggm|  w^i   \in \rho^i \RLT_{k}(\X),\ y^i_j=w^i_{\{j\}}\text{ for } j\in[n],\ a_{i0}\rho^i + a_i^\intercal y^i = 1,\\ 
        & \quad \quad u_S =  a_{i0}+\sum_{j \in S}\Bigl(a_{ij} w^i_{S\cup\{j\}}\Bigr) \text{ for } S \subseteq [n] \text{ s.t. } 1 \leq \vert S \vert \leq k \biggr\},
    \end{aligned}
    \]
    where $w^i_T = y^i_T$ if $T = \{j\}$ for some $j \in [n]$. Let $(\mathcal{R}^k)_{k=1}^n$ be a hierarchy of relaxations for $\G$, where
\[
\mathcal{R}^k =  \Bigl\{(\rho,y,u) \Bigm| (\rho^i,y^i,u) \in \mathcal{R}^k_i \text{ for } i \in [m] \Bigr\} \quad \text{ for } k \in [n]. 
\]
Clearly, $\mathcal{R}^1 \supseteq \cdots \supseteq \mathcal{R}^n$. In fact, $ \proj_{(\rho,y)} (\mathcal{R}^n) = \conv(\G)$. 
\begin{theorem}\label{them:RLTconverge}
 The following holds: $\proj_{(\rho,y)} (\mathcal{R}^1) \supseteq \cdots \supseteq  \proj_{(\rho,y)} (\mathcal{R}^n) = \conv(\G)$. 
\end{theorem}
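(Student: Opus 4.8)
The plan is to prove the equality $\proj_{(\rho,y)}(\mathcal{R}^n) = \conv(\G)$ by a two-sided inclusion, and to read off the decreasing chain from the monotonicity of the RLT hierarchy. For the chain, I would use the standard nesting $\proj\bigl(\RLT_{k+1}(\X)\bigr) \subseteq \RLT_k(\X)$: discarding the degree-$(k{+}1)$ monomial variables of a level-$(k{+}1)$ RLT point yields a feasible level-$k$ point, and the defining equations of $\mathcal{R}^k_i$ indexed by $S$ with $\vert S\vert \le k$ are literally among those of $\mathcal{R}^{k+1}_i$. Hence any point of $\mathcal{R}^{k+1}$ restricts to one of $\mathcal{R}^k$, so $\proj_{(\rho,y)}(\mathcal{R}^{k+1}) \subseteq \proj_{(\rho,y)}(\mathcal{R}^{k})$. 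The easy inclusion $\conv(\G) \subseteq \proj_{(\rho,y)}(\mathcal{R}^n)$ follows by lifting each $\bar x \in \X$ via $u_S = \prod_{j\in S}\bar x_j$ and $w^i_S = \rho^i u_S$ with $\rho^i = 1/(a_{i0}+a_i^\intercal \bar x)$: the equations cutting out $\mathcal{R}^n_i$ are linearizations of polynomial identities that hold at $\bar x$, so $\G \subseteq \proj_{(\rho,y)}(\mathcal{R}^n)$; since $\mathcal{R}^n$ is described by linear constraints its projection is convex, and the convex hull is contained.

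The heart of the proof is the reverse inclusion $\proj_{(\rho,y)}(\mathcal{R}^n) \subseteq \conv(\G)$. Fix $(\rho,y,u)\in\mathcal{R}^n$. The crux is that at the top level $\RLT_n(\X)$ describes exactly the \emph{simplex} $\conv(U)$, where $U = \bigl\{(\prod_{j\in S}x_j)_{S} \bigm| x\in\X\bigr\}$; this is an extension of Lemma~\ref{lemma:rlt}, and the vertices $u^{(\bar x)}$, $\bar x\in\X$, are affinely independent because the multilinear monomial map is a bijection on $0\text{-}1$ points. First I would note $\rho^i > 0$ for every $i$, since $\rho^i = 0$ forces $w^i = 0$ and violates the normalization $a_{i0}\rho^i + a_i^\intercal y^i = 1$. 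Therefore $w^i/\rho^i \in \conv(U)$, so $w^i/\rho^i = \sum_{\bar x}\lambda^i_{\bar x}\,u^{(\bar x)}$ with $\lambda^i \ge 0$ and $\sum_{\bar x}\lambda^i_{\bar x} = 1$. Substituting this into the defining relation $u_S = \bigl(a_{i0}+\sum_{j\in S}a_{ij}\bigr)w^i_S + \sum_{r\notin S}a_{ir}w^i_{S\cup\{r\}}$ and using the pointwise identity $\bigl(a_{i0}+\sum_{j\in S}a_{ij}\bigr)u^{(\bar x)}_S + \sum_{r\notin S}a_{ir}u^{(\bar x)}_{S\cup\{r\}} = (a_{i0}+a_i^\intercal\bar x)\,u^{(\bar x)}_S$ gives $u = \sum_{\bar x}\mu^i_{\bar x}\,u^{(\bar x)}$ with $\mu^i_{\bar x} := \rho^i\lambda^i_{\bar x}(a_{i0}+a_i^\intercal\bar x)\ge 0$, and the normalization yields $\sum_{\bar x}\mu^i_{\bar x} = 1$.

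Now the decisive step is \emph{uniqueness of barycentric coordinates}: the single vector $u$ lies in the simplex $\conv(U)$, whose vertices $\{u^{(\bar x)}\}$ are affinely independent, so its convex representation is unique and hence $\mu^i_{\bar x}$ does not depend on $i$; call it $\mu_{\bar x}$. This is precisely the mechanism that couples the separately convexified ratios into their simultaneous hull. It then remains to check $(\rho,y) = \sum_{\bar x}\mu_{\bar x}\,g^{(\bar x)}$, where $g^{(\bar x)}\in\G$ has $i$-th block $\tfrac{(1,\bar x)}{a_{i0}+a_i^\intercal\bar x}$: one computes $\sum_{\bar x}\mu_{\bar x}/(a_{i0}+a_i^\intercal\bar x) = \rho^i\sum_{\bar x}\lambda^i_{\bar x} = \rho^i$, and similarly each $y$-coordinate matches using $y^i_j = w^i_{\{j\}} = \rho^i\sum_{\bar x}\lambda^i_{\bar x}\bar x_j$. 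Since $\mu\ge 0$ and $\sum_{\bar x}\mu_{\bar x}=1$, this exhibits $(\rho,y)\in\conv(\G)$.

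The main obstacle is the top-level RLT exactness, namely that $w^i/\rho^i$ is genuinely forced into the simplex $\conv(U)$; this is the technical lemma to pin down carefully (it is where the hypothesis $\X=\{Cx\ge d\}\cap\{0,1\}^n$ and the full-degree products in $\RLT_n(\X)$ are used, extending Lemma~\ref{lemma:rlt}). Once the simplex structure is in hand, the synchronization of the multipliers $\mu^i_{\bar x}$ across the $m$ ratios is immediate from uniqueness of barycentric coordinates, and the remaining verification is routine linear algebra.
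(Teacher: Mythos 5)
Your proposal is correct and is, at its core, the same argument as the paper's: both proofs rest on Sherali--Adams level-$n$ exactness (the fact that $\RLT_n(\X)$ describes $\conv(U)$, cited by the paper from \cite{sherali1990hierarchy}) and on the affine independence of the multilinear monomial vectors, which is what forces the convex multipliers arising from the different ratios to coincide. The only real difference is modularity: the paper proves the stronger lifted-space equality $\conv(\G^n)=\mathcal{R}^n$ by invoking Lemma~\ref{lemma:common_simplex} (whose mechanism is precisely your uniqueness-of-barycentric-coordinates step) together with Theorem~\ref{thm:fractionalmain} applied to the lifted numerator functions $v^i_S=\prod_{j\in S}x_j\,(a_{i0}+a_i^\intercal x)$ (whose mechanism is precisely your $\rho^i$-scaling step, including the $\rho^i>0$ observation), and then projects onto $(\rho,y)$; you inline both mechanisms into a single explicit convex-combination computation, which yields a self-contained proof of the projected statement at the cost of re-deriving what the paper's two prior results already provide.
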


\section{Theoretical comparisons and computational experiments}\label{section:computation}
In this section, we consider the constrained mixed-binary fractional programs, propose a relaxation for them using quadratic programming, and show that the resulting relaxation is tighter than relaxations used in the fractional programming literature. Then, we perform a computational study of relaxations for binary fractional programs occurring in assortment optimization using our reformulation based on \eqref{eq:1term}. Then, we consider a set arising from chemical process design and demonstrate that our relaxation based on the construction in Section~\ref{section:univariatefractional} closes significant gap relative to standard relaxation techniques. Our computational experiments are performed on a MacBook Pro with Apple M1 Pro with 10-cores CPU and 16 GB of memory. The code is written in \textsc{Julia v1.6}~\cite{bezanson2017julia}, and our formulations are modeled using \textsc{JuMP v1.4.0}~\cite{lubin2023jump} and solved using  \textsc{Gurobi v10.0.2}~\cite{gurobi} as an MIP solver  and \textsc{SCIP v8.0}~\cite{bestuzheva2023global} as an MINLP solver. 
\subsection{A polynomial-sized relaxation for fractional programs and theoretical comparisons}\label{section:comparisons}
In this section,
we leverage Theorem~\ref{thm:fractionalmain} and simple outer-approximations for quadratic programs to propose a polynomial-sized relaxation (see~\eqref{eq:PolytopeQP-MC} and \eqref{eq:BQP-SDP}/\eqref{eq:BQP-SDPa} below) for constrained fractional programs, and theoretically compare its tightness relative to existing relaxations. Our results show that this approach yields relaxations that are tighter than those in the literature. 
Specifically, we will focus our attention on:
\begin{equation}\label{eq:computation:lfp}
\max_{x\in\X}~~~\sum\limits_{i=1}^{m}\frac{b_{i0}+b_{i}^\intercal x}{a_{i0}+a_{i}^\intercal x}+c^\intercal x,\tag{\textsc{FP}}
\end{equation}
where we assume that $\X=\bigl\{x\in \{0,1\}^p\times\R^{n-p}\bigm| Cx\le d\bigr\}$ for $C\in \R^{r\times n}$, $d\in \R^r$, $\X$ is bounded, and $a_{i0} + a_i^\intercal x > 0$ for all $x$ that satisfy $Cx\le d$ and $x_j\in [0,1]$ for $j\in [p]$. The special case with $p=n$ will be referred to as \textit{binary fractional programming} (BFP). 

An approach used to reformulate \eqref{eq:computation:lfp} is to introduce $\rho^i=\frac{1}{a_{i0}+a_{i}^\intercal x}$ and  $y^i_j=\frac{x_j}{a_{i0}+a_{i}^\intercal x}$ for all $i\in [m] $ and $j\in [n]$. Then, $\eqref{eq:computation:lfp}$ is reformulated as
\begin{subequations}\label{eq:cc-reformulation}
\begin{alignat}{3}
&\max&\quad&\sum\limits_{i=1}^{m}\big(b_{i0}\rho^{i}+b_i^\intercal y^i\big)+c^\intercal x \notag \\
&\text{s.t.} && x_i \in \{0,1\} && i\in [p] \\
&&&y_j^i= \rho^ix_j&\quad&i\in[m],~j\in[n]\label{eq:ybilinear}\\
&&&a_{i0}\rho^i+a_i^\intercal y^i=1&&i\in[m]\label{eq:normalize}\\
&&& \overline{C}x\le \bar{d},\label{eq:domainPolytope} 
\end{alignat}
\end{subequations}
where $\overline{C}x\le \bar{d}$ consists of the inequalities $Cx\le d$, and $x_j\in [0,1]$ for $j\in [p]$.
This reformulation leads to one of the most popular polyhedral relaxations for~\eqref{eq:computation:lfp} \cite{li1994global,mehmanchi2019fractional}. The relaxation replaces \eqref{eq:ybilinear} with McCormick inequalities. In particular, letting $\rho^i(L)$ and $\rho^i(U)$ (resp. $x_j(L)$ and $x_j(U)$) be  lower and upper bounds on variable $\rho^i$ (resp. $x_j$), a  polyhedral relaxation for the feasible region of~\eqref{eq:cc-reformulation} is given as follows: 
\begin{equation}\label{eq:LFP-MC}
\mathcal{R}_{\textsf{LEF}}: = \left\{(\rho,y,x) \;\middle|\;
\begin{aligned}
   &y_j^i\leq x_{j}(U)\rho^i+\rho^i(L)x_{j}-x_{j}(U)\rho^i(L)&& \text{for }  i\in[m], j\in[n]\\
&y_j^i\leq x_{j}(L)\rho^i+\rho^i(U)x_{j}-x_{j}(L)\rho^i(U)&& \text{for }  i\in[m],j\in[n]\\
&y_j^i\geq x_{j}(U)\rho^i+\rho^i(U)x_{j}-x_{j}(U)\rho^i(U)&& \text{for }  i\in[m],j\in[n]\\
&y_j^i\geq x_{j}(L)\rho^i+\rho^i(L)x_{j}-x_{j}(L)\rho^i(L)&& \text{for }   i\in[m],j\in[n]\\
&\eqref{eq:normalize}, \eqref{eq:domainPolytope} 
\end{aligned}
\right\},
\end{equation}
where $\rho:=(\rho^i)_{i=1}^m $ and $ y :=(y^i)_{i=1}^m$.
The strength of McCormick inequalities relies heavily on the bounds of the associated variables. For 
unconstrained BFP, we have that $x_{j}(L)=0$, $x_{j}(U)=1$, $\rho^i(L)=\frac{1}{\sum_{j=0}^n a_{ij}}$, and $\rho^i(U)=\frac{1}{a_{i0}}$. For the general case, we assume that these bounds are obtained by minimizing/maximizing their defining functions over the linear constraints using linear programming.

We now consider an alternate relaxation, based on the techniques developed in this paper. To solve \eqref{eq:computation:lfp}, it suffices to convexify $\G:=\cap_{i=1}^m\G^i$, where 
\begin{equation*}
\begin{aligned}
\G^i := \biggl\{ \bigl(\rho^i,y^i,x \bigr) \biggm| & x \in \{0,1\}^p\times\R^{n-p}, \overline{C}x\le \bar{d},\\
 &~ \rho^i=\frac{1}{a_{i0}+a_{i}^\intercal x},~ y^i_j=\frac{x_j}{a_{i0}+a_{i}^\intercal x} \text{ for } j\in[n]\biggr\}. 
\end{aligned}
\end{equation*}
Although, tighter relaxations can be derived using copositive programming as in Section~\ref{section:copositive} or the hierarchy of Section~\ref{section:general-linear-frac}, we focus, here, on the polynomial-sized relaxations from Section~\ref{section:BQP} and RLT constraints obtained at the first level. Applying Theorem~\ref{thm:fractionalmain}, we get
\begin{equation*}
 \begin{aligned}
     \conv(\G^i) =  \Bigl\{\bigl(\rho^i,y^i, x \bigr) \Bigm|  (y^i,W^i) &\in \rho^i \conv(\F),\ \rho^i \geq 0,\ a_{i0}\rho^i +  a_i^\intercal y^i  =1,\  \\
     & x_j = (a_{i0}+a_{ij})y^i_j +  \sum_{k \neq j}a_{ik} W^{i}_{jk},  \text{ for } j \in [n]  \Bigr\},
 \end{aligned}
\end{equation*}
where $\F = \bigl\{(x,xx^\intercal)\bigm| \overline{C}x\le \bar{d},\ x\in\{0,1\}^p\times\R^{n-p}\bigr\}$. As mentioned before, since it is \revision{NP-hard} to optimize linear functions over $\F$, we will replace $\conv(\F)$ with its 1st-level RLT relaxation, $\widehat{F}$. In particular, $\widehat{\F}$ will be defined as:
\begin{equation*}\label{eq:widehatF}
    \widehat{F} = \Bigl\{(x,X)\Bigm| \overline{C}X\overline{C}^\intercal - \bar{d}x^\intercal \overline{C}^\intercal - \overline{C}x\bar{d}^\intercal + \bar{d}\bar{d}^\intercal \ge 0,\ X_{jj} = x_j \text{ for }j\in[p]\Bigr\}.
\end{equation*}
This construction yields a polyhedral relaxation for $\conv(\G)$,

\begin{equation}\label{eq:PolytopeQP-MC}
\mathcal{R}_{\QP}:=\left\{(\rho,y,x)  \;\middle|\;
    \begin{alignedat}{3} 
    &&&\overline{C}W^i\overline{C}^\intercal - \bar{d}(y^i)^\intercal \overline{C}^\intercal - \overline{C}y^i\bar{d}^\intercal + \rho^i\bar{d}\bar{d}^\intercal \ge 0 \text{ for } i\in[m]  \\
    &&& a_{i0}\rho^i +  a_i^\intercal y^i  =1,\ \rho^i\geq 0  \quad \text{ for } i\in[m] \\
    &&&x_j = a_{i0}y^i_j + \sum_{k}a_{ik} W^{i}_{jk}  \quad \text{ for }i\in[m], ~j \in [n]\\
    &&&\overline{C}y^i\le \rho^i\bar{d} \quad \text{ for } i\in [m]\\
    &&&W^i_{jj} = y^i_j \quad \text{ for } j\in[p] \\
        &&&\overline{C}x\le \bar{d}
    \end{alignedat}\right\}.
\end{equation}
It can be shown that  this relaxation is tighter than the relaxation $\mathcal{R}_{\textsf{LEF}}$.
\begin{proposition}\label{prop:QP-MC}
$\mathcal{R}_{\QP}\subseteq \mathcal{R}_{\textsf{LEF}}$.
\end{proposition}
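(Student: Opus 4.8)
The plan is to recognize that $\mathcal{R}_{\textsf{LEF}}$ is nothing more than the McCormick relaxation of the bilinear equations $y^i_j=\rho^i x_j$ over the boxes $[\rho^i(L),\rho^i(U)]\times[x_j(L),x_j(U)]$, together with the linear rows \eqref{eq:normalize} and \eqref{eq:domainPolytope} that appear verbatim in $\mathcal{R}_{\QP}$. Thus it suffices to fix a point $(\rho,y,x,W)$ feasible for $\mathcal{R}_{\QP}$ and show that its projection satisfies all four McCormick inequalities for every $i\in[m]$ and $j\in[n]$; the normalization and domain inequalities are common to both sets. My strategy is to exhibit each McCormick inequality as a nonnegative combination of the entries of the matrix RLT inequality $\Omega^i:=\overline{C}W^i\overline{C}^\intercal-\bar{d}(y^i)^\intercal\overline{C}^\intercal-\overline{C}y^i\bar{d}^\intercal+\rho^i\bar{d}\bar{d}^\intercal\ge 0$, where the inequality is understood entrywise.

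The multipliers come from LP duality applied to the defining programs of the bounds. Since $\X$ is bounded and $\{x\mid \overline{C}x\le\bar{d}\}$ is nonempty, strong duality produces nonnegative vectors $\lambda,\eta,\mu,\nu$ with $\lambda^\intercal\overline{C}=e_j^\intercal,\ \lambda^\intercal\bar{d}=x_j(U)$; $\eta^\intercal\overline{C}=-e_j^\intercal,\ \eta^\intercal\bar{d}=-x_j(L)$; and, writing $M_i:=\max\{a_{i0}+a_i^\intercal x\mid \overline{C}x\le\bar{d}\}=1/\rho^i(L)$ and $m_i:=\min\{a_{i0}+a_i^\intercal x\mid \overline{C}x\le\bar{d}\}=1/\rho^i(U)$, also $\mu^\intercal\overline{C}=a_i^\intercal,\ \mu^\intercal\bar{d}=M_i-a_{i0}$ and $\nu^\intercal\overline{C}=-a_i^\intercal,\ \nu^\intercal\bar{d}=a_{i0}-m_i$. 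The constants satisfy $M_i\ge m_i>0$ and are finite because the region is bounded and the denominator $a_{i0}+a_i^\intercal x$ stays positive on it.

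With these multipliers the heart of the argument is the algebraic identity, valid at every point of $\mathcal{R}_{\QP}$, namely $M_i\,\bigl(x_j(U)\rho^i+\rho^i(L)x_j-x_j(U)\rho^i(L)-y^i_j\bigr)=\lambda^\intercal\Omega^i\mu$, together with its three sign-matched analogues $m_i(\cdots)=\eta^\intercal\Omega^i\nu$, $m_i(\cdots)=\lambda^\intercal\Omega^i\nu$, and $M_i(\cdots)=\eta^\intercal\Omega^i\mu$ for the remaining McCormick slacks. Each identity is obtained by expanding the quadratic form $\lambda^\intercal\Omega^i\mu$ (etc.) using the defining relations of the multipliers, and then eliminating the monomial block $\sum_k a_{ik}W^i_{jk}$ and the linear form $a_i^\intercal y^i$ via the constraint $x_j=a_{i0}y^i_j+\sum_k a_{ik}W^i_{jk}$ and the normalization $a_{i0}\rho^i+a_i^\intercal y^i=1$; the $a_{i0}$-weighted cross terms cancel, leaving exactly $M_i$ (or $m_i$) times the corresponding McCormick expression. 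Because $\Omega^i\ge 0$ entrywise and the multipliers are nonnegative, each right-hand side is nonnegative, and dividing by the positive constant $M_i$ or $m_i$ delivers the inequality.

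I expect the main obstacle to be bookkeeping rather than anything conceptual: one must read off the four correctly sign-consistent multiplier pairs matching the four corners of the McCormick box, and then verify that after substituting \eqref{eq:normalize} and the defining relation of $x_j$ the expansion collapses to a \emph{pure} positive multiple of the McCormick slack with no residual terms. The enabling observation that keeps the two linearizations consistent is that, on $\mathcal{R}_{\QP}$, the bound factors in $\rho^i$ are themselves captured by the constraints: using \eqref{eq:normalize} one checks $\rho^i-\rho^i(L)=\tfrac{1}{M_i}\mu^\intercal(\bar{d}\rho^i-\overline{C}y^i)$ and $\rho^i(U)-\rho^i=\tfrac{1}{m_i}\nu^\intercal(\bar{d}\rho^i-\overline{C}y^i)$, both nonnegative by $\overline{C}y^i\le\rho^i\bar{d}$, so the $\rho^i$-factors are genuinely encoded by the row-products recorded in $\Omega^i$ and in $\overline{C}y^i\le\rho^i\bar{d}$.
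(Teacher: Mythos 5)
Your proposal is correct: I checked all four claimed identities, e.g.
\[
\lambda^\intercal\Omega^i\mu
=e_j^\intercal W^i a_i-x_j(U)\,a_i^\intercal y^i-(M_i-a_{i0})\,y^i_j+\rho^i x_j(U)(M_i-a_{i0})
=M_i\bigl(x_j(U)\rho^i+\rho^i(L)x_j-x_j(U)\rho^i(L)-y^i_j\bigr),
\]
after substituting $e_j^\intercal W^i a_i=x_j-a_{i0}y^i_j$ and $a_i^\intercal y^i=1-a_{i0}\rho^i$, and the other three collapse analogously with the constants $m_i$, $m_i$, $M_i$ exactly as you paired them. Your route is the same underlying mechanism as the paper's: both sandwich the entrywise-nonnegative matrix $\overline{C}W^i\overline{C}^\intercal-\bar{d}(y^i)^\intercal\overline{C}^\intercal-\overline{C}y^i\bar{d}^\intercal+\rho^i\bar{d}\bar{d}^\intercal$ with nonnegative Farkas multipliers for the bound constraints and then eliminate $W^i$ via $x_j=a_{i0}y^i_j+\sum_k a_{ik}W^i_{jk}$ and \eqref{eq:normalize}. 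The difference is in the certificates: the paper proves a more general lemma for \emph{arbitrary} pairs of implied inequalities, whose Farkas multipliers only satisfy $\bar{d}^\intercal\omega\le\bar{\alpha}$ and $\nu^\intercal\bar{d}\le\bar{\beta}$; this forces a chain of one-sided estimates with sign-matching residual terms and, crucially, requires the rows $\overline{C}y^i\le\rho^i\bar{d}$ of $\mathcal{R}_{\QP}$ to control them. You instead exploit that the bounds are LP optima, so strong duality yields equality-form certificates, and each McCormick slack becomes an \emph{exact} positive multiple of a single bilinear form in $\Omega^i$ --- no residual terms, and no use of $\overline{C}y^i\le\rho^i\bar{d}$ at all. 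What you lose is reusability: the paper's inequality-form lemma is recycled verbatim in Proposition~\ref{prop:genMcCormick} and the disaggregation result, where the relevant valid inequalities are not tight LP bounds; what you gain is a shorter, identity-based proof of this proposition that even establishes a marginally stronger statement (McCormick validity without the $\overline{C}y^i\le\rho^i\bar{d}$ rows).
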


\revision{The containment in Proposition~\ref{prop:QP-MC} is often strict as our computations and the example in Appendix~\ref{app:strictcontainment} demonstrate.}
In mixed-integer nonlinear programming (MINLP) literature, another form of McCormick relaxation is used to solve \eqref{eq:computation:lfp}, see \cite{quesada1995global} for example. This  relaxation writes $z_i=\frac{b_{i0}+b_i^\intercal x}{a_{i0} + a_i^\intercal x}$ as $z_{i}\bigl(a_{i0}+a_i^\intercal x\bigr) = b_{i0} + b_i^\intercal x$ and relaxes the left hand side using McCormick inequalities. Since McCormick inequalities require bounds, we minimize/maximize $\frac{b_{i0}+b_i^\intercal x}{a_{i0} + a_i^\intercal x}$ (resp. $a_{i0}+a_i^\intercal x$) over $\overline{C}x\le \bar{d}$ to derive bounds for $z_i$ (resp. $a_{i0}+a_i^\intercal x$). We show that the McCormick inequalities, so derived, are also implied by $\mathcal{R}_{\QP}$. 

\begin{proposition}\label{prop:genMcCormick}
    Assume $z_i = \frac{b_{i0}+b_i^\intercal x}{a_{i0} + a_i^\intercal x}$ $(\text{resp. $d_i = a_{i0} + a_i^\intercal x$})$ is bounded between $z_i^L$ and $z_i^U$ $($resp. $d_i^L>0$ and $d_i^U$$)$ for all $x$ that satisfy $\overline{C}x\le \bar{d}$. Then, $\mathcal{R}_{\QP}$ implies the inequalities obtained by relaxing the left-hand-side of $z_i d_i = b_{i0} + b_i^\intercal x$ using McCormick inequalities. 
\end{proposition}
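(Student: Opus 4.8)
The plan is to work in the common space of the variables $(x,z)$, where $z_i$ denotes the value $\frac{b_{i0}+b_i^\intercal x}{a_{i0}+a_i^\intercal x}$ of the $i$-th ratio, and to show that every point of $\mathcal{R}_{\QP}$, under the identification $z_i = b_{i0}\rho^i + b_i^\intercal y^i$ and $d_i := a_{i0}+a_i^\intercal x$ forced by the normalization \eqref{eq:normalize}, satisfies each of the four McCormick inequalities bounding the bilinear product $z_i d_i$. These inequalities are precisely the linearizations, after substituting the defining relation $z_i d_i = b_{i0}+b_i^\intercal x$ for the product, of the four sign conditions $(z_i-z_i^L)(d_i-d_i^L)\ge 0$, $(z_i^U-z_i)(d_i^U-d_i)\ge 0$, $(z_i^U-z_i)(d_i-d_i^L)\ge 0$, and $(z_i-z_i^L)(d_i^U-d_i)\ge 0$. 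I will treat the first in detail; the rest are identical up to the choice of the two bound factors.

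First I would record that each bound-defining factor is a valid inequality for $P:=\{x\mid \overline{C}x\le \bar d\}$ that is tight at the point used to compute the bound. Hence, by LP duality, each admits a representation as a nonnegative combination $\kappa^\intercal(\bar d-\overline{C}x)$ of the rows of $\bar d-\overline{C}x\ge 0$; concretely $d_i-d_i^L$, $d_i^U-d_i$, $(b_{i0}+b_i^\intercal x)-z_i^L d_i$, and $z_i^U d_i-(b_{i0}+b_i^\intercal x)$ all have nonnegative multipliers. The crux is then a degree-reduction step valid inside $\mathcal{R}_{\QP}$: I would establish the three linearization identities $\rho^i d_i\mapsto 1$, $\rho^i d_i x_j\mapsto x_j$, and $\rho^i d_i^2\mapsto d_i$. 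The first is the normalization; the second is the linking constraint $x_j=a_{i0}y^i_j+\sum_k a_{ik}W^i_{jk}$; and the third follows by expanding $\rho^i(a_{i0}+a_i^\intercal x)^2$ and replacing $\sum_{j,k}a_{ij}a_{ik}W^i_{jk}$ via the linking constraint, then invoking the normalization once more.

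With these identities in hand, I would show that the left-hand side of the first McCormick inequality equals the linearization of the single product $\rho^i\bigl[(b_{i0}+b_i^\intercal x)-z_i^L d_i\bigr]\,(d_i-d_i^L)$: expanding this triple product and collapsing the spurious higher-degree terms through $\rho^i d_i\mapsto 1$, $\rho^i d_i x_j\mapsto x_j$, and $\rho^i d_i^2\mapsto d_i$ returns exactly $(b_{i0}+b_i^\intercal x)-z_i^L d_i - d_i^L z_i + z_i^L d_i^L$, which is the inequality. Substituting the nonnegative representations of the two bound factors, this product becomes $\sum_{k,\ell}\kappa_k\mu_\ell\,\rho^i(\bar d_k-\overline{C}_k x)(\bar d_\ell-\overline{C}_\ell x)$, whose linearization is the matching nonnegative combination of the entries of the RLT matrix $\overline{C}W^i\overline{C}^\intercal-\bar d(y^i)^\intercal\overline{C}^\intercal-\overline{C}y^i\bar d^\intercal+\rho^i\bar d\bar d^\intercal$ appearing in \eqref{eq:PolytopeQP-MC}. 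Since this matrix is constrained entrywise nonnegative in $\mathcal{R}_{\QP}$, the combination is nonnegative and the inequality holds; repeating with the appropriate factor pairs dispatches the other three.

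I expect the main obstacle to be the bookkeeping in the degree-reduction step, namely verifying that the linearization of the triple product genuinely reduces to the stated linear McCormick expression using \emph{only} the normalization and linking constraints of $\mathcal{R}_{\QP}$, rather than the exact (and unavailable) relation $\rho^i=1/d_i$. Once that identity is pinned down, the remainder is a direct application of LP duality for the bounds and the entrywise nonnegativity of the RLT matrix.
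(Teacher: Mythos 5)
Your proof is correct, but it takes a genuinely different route from the paper's. The paper keeps the ratio bound $z_i \ge z_i^L$ as a linear inequality in the lifted $(\rho^i,y^i)$-space, namely $b_{i0}\rho^i + b_i^\intercal y^i \ge z_i^L$, proves a general lemma (its inequality \eqref{eq:generalMcCormick}) about linearized products of one lifted-space valid inequality with one $x$-space valid inequality, and establishes that lemma by LP duality (with a multiplier $\theta$ on the normalization equality) combined with the inequality \eqref{eq:homogenizelinearize} already derived in the proof of Proposition~\ref{prop:QP-MC}. You instead clear the denominator, turning $z_i \ge z_i^L$ into the $x$-space inequality $(b_{i0}+b_i^\intercal x) - z_i^L d_i \ge 0$, so that both bound factors live in the same space; you then form the triple product with $\rho^i$ and collapse it via the degree-reduction identities $\rho^i d_i \mapsto 1$, $\rho^i d_i x_j \mapsto x_j$, $\rho^i d_i^2 \mapsto d_i$ (each of which I verified follows from the normalization and linking constraints of $\mathcal{R}_{\QP}$), landing directly on a nonnegative combination of entries of the RLT matrix. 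Your route is more symmetric and self-contained — it never needs the intermediate result from Proposition~\ref{prop:QP-MC} — while the paper's formulation of a stand-alone product lemma pays off later, since \eqref{eq:generalMcCormick} is reused verbatim in the disaggregation result of Appendix~\ref{ec:disaggregate}.

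Two small points. First, your duality step leans on the bounds being tight (attained by optimization over the polytope) so that the Farkas representation has no constant term; the paper's context does compute bounds this way, but its own proof allows slack ($\theta \le \bar{\gamma}$, $\nu^\intercal \bar{d} \le \bar{\beta}$). Your argument extends to non-tight bounds with no difficulty: the Farkas certificate then reads $c_0 + \kappa^\intercal(\bar{d}-\overline{C}x)$ with $c_0 \ge 0$, and the extra cross terms in the expanded triple product are nonnegative on $\mathcal{R}_{\QP}$ because $\rho^i \ge 0$ and $\overline{C}y^i \le \rho^i\bar{d}$ are among its constraints. Second, be explicit that the degree-reduction identities hold only \emph{on} $\mathcal{R}_{\QP}$ (they invoke its constraints), whereas the identification of the linearized triple product with the combination of RLT entries holds identically as linear functions of $(\rho^i,y^i,W^i)$; your proof uses both facts in the right places, but the distinction is worth stating to keep the logic airtight.
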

The relaxation described in Proposition~\ref{prop:genMcCormick} can be tightened when the bounds on $a_{i0}+a_i^\intercal x$ are implied by bounds on $x$ \cite{tawarmalani2004global}. The idea is to disaggregate the product $z_i(a_{i0}+a_i^\intercal x)$ by distributing the product over summation and relaxing $z_ix_j$ using McCormick envelopes. We show in Appendix~\ref{ec:disaggregate} that the inequalities so obtained are also implied in $\mathcal{R}_{\QP}$.

The following conic inequalities were proposed in~\cite{sen2018conic} to tighten $\mathcal{R}_{\textsf{LEF}}$:
\begin{subequations}\label{eq:conic}
    \begin{align}
        y_j^i(a_{i0}+a_i^\intercal x)\geq x_j^2& \quad  \text{ for } i\in[m],~j\in[p]\label{eq:Conic_a}\\
        \rho^i(a_{i0}+a_i^\intercal x)\geq 1& \quad \text{ for } i\in[m]\label{eq:Conic_b}.
    \end{align}
\end{subequations}
Inequality~\eqref{eq:Conic_b} is commonly used in MINLP literature as $\rho^i\ge \frac{1}{a_{i0} +a_i^\intercal x}$, exploiting that the defining function of $\rho^i$ is convex. It is also used to derive nonlinear underestimators for $x_i\times \frac{1}{a_{i0} +a_i^\intercal x}$ by viewing it as a product of two convex functions \cite{mccormick1976computability,quesada1995global,tawarmalani2001semidefinite}. A conic relaxation for the feasible region of~\eqref{eq:cc-reformulation} that utilizes \eqref{eq:conic} is then given as follows:
\begin{equation*}\label{eq:LFP-CO}
    \mathcal{R}_{\textsf{CEF}}=\Bigl\{\bigl((\rho^i)_{i=1}^m,(y^i)_{i=1}^m,x\bigr) \Bigm| \bigl((\rho^i)_{i=1}^m,(y^i)_{i=1}^m,x\bigr)\in \mathcal{R}_{\textsf{LEF}} \text{ and satisfies \eqref{eq:conic}} \Bigr\}.
\end{equation*}
In \cite{mehmanchi2019fractional}, formulations based on $\mathcal{R}_{\textsf{CEF}}$ are found to be the best performing formulations. Here, we show that \eqref{eq:conic} arises naturally from standard relaxations used in quadratic programming literature. In particular, it is standard to relax $\begin{pmatrix}1\\x\end{pmatrix}\begin{pmatrix}1 & x^\intercal\end{pmatrix}\succeq 0$ into $\begin{pmatrix}1&x^\intercal\\x&X\end{pmatrix}\succeq 0$. Following Theorem~\ref{thm:fractionalmain}, we homogenize this matrix inequality as follows: 
\begin{equation}\label{eq:BQP-SDP}
\begin{aligned}
    \begin{pmatrix}\rho^i & (y^i)^\intercal \\ y^i & W^i \end{pmatrix}\succeq 0 \quad \text{ for } i\in[m],\\
\end{aligned}
\end{equation}
and augment $\mathcal{R}_{\QP}$ with this inequality.
We show this relaxation implies \eqref{eq:conic}. First, we rewrite \eqref{eq:BQP-SDP} in an equivalent form:
\begin{equation}\label{eq:BQP-SDPa}
    \begin{pmatrix}
    a_{i0}+x^\intercal a_i  & 1 & x^\intercal\\
    1 & \rho^i & (y^i)^\intercal\\
    x & y^i & W^i
    \end{pmatrix}\succeq 0 \quad \text{ for } i\in[m].
\end{equation}

\begin{proposition}\label{prop:SQPconic}
    Adding \eqref{eq:BQP-SDPa} to $\mathcal{R}_{\QP}$ is equivalent to adding \eqref{eq:BQP-SDP} to $\mathcal{R}_{\QP}$.
\end{proposition}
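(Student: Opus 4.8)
The plan is to fix an index $i \in [m]$ throughout and to exploit that the $(n+2)\times(n+2)$ matrix in~\eqref{eq:BQP-SDPa}, call it $N$, is obtained by bordering the $(n+1)\times(n+1)$ matrix in~\eqref{eq:BQP-SDP}, call it $M = \begin{pmatrix} \rho^i & (y^i)^\intercal \\ y^i & W^i \end{pmatrix}$, with a single additional row and column placed on top. Concretely, $M$ is exactly the bottom-right principal submatrix of $N$, and the adjoined first row of $N$ is $(a_{i0}+x^\intercal a_i,\,1,\,x^\intercal)$. The key fact I would establish is that, whenever the defining linear constraints of $\mathcal{R}_{\QP}$ hold, this adjoined row (and, by symmetry, the adjoined column) is a linear combination of the rows (columns) of the block containing $M$, so that $N$ and $M$ carry the same positive-semidefiniteness information.

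To make this precise, I would introduce the vector $e := (a_{i0},\,a_i^\intercal)^\intercal \in \R^{n+1}$ together with the $(n+2)\times(n+1)$ matrix $E := \begin{pmatrix} e^\intercal \\ I_{n+1}\end{pmatrix}$, and show that $N = E\,M\,E^\intercal$. Expanding gives $E M E^\intercal = \begin{pmatrix} e^\intercal M e & (Me)^\intercal \\ Me & M\end{pmatrix}$, so it suffices to verify $Me = (1,\,x^\intercal)^\intercal$ and $e^\intercal M e = a_{i0}+x^\intercal a_i$. The first coordinate of $Me$ equals $a_{i0}\rho^i + a_i^\intercal y^i$, which is $1$ by the normalization constraint~\eqref{eq:normalize} present in $\mathcal{R}_{\QP}$; the remaining coordinates form $a_{i0}y^i + W^i a_i$, whose $j$-th entry is $a_{i0}y^i_j + \sum_k a_{ik}W^i_{jk} = x_j$ by the defining linear relation $x_j = a_{i0}y^i_j + \sum_k a_{ik}W^i_{jk}$ in $\mathcal{R}_{\QP}$ (using symmetry of $W^i$). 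Consequently $e^\intercal M e = e^\intercal (Me) = a_{i0}\cdot 1 + a_i^\intercal x = a_{i0}+x^\intercal a_i$, which completes the identity $N = E M E^\intercal$.

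With this congruence identity in hand, both directions follow immediately. If $M \succeq 0$, then $N = E M E^\intercal \succeq 0$, since congruence preserves positive semidefiniteness; this shows that~\eqref{eq:BQP-SDP} implies~\eqref{eq:BQP-SDPa} over $\mathcal{R}_{\QP}$. Conversely, $M$ is a principal submatrix of $N$, and every principal submatrix of a positive semidefinite matrix is positive semidefinite, so~\eqref{eq:BQP-SDPa} implies~\eqref{eq:BQP-SDP}. I expect the only genuine obstacle to lie in the bookkeeping of the second paragraph, namely verifying that the bordered row and column are exactly the linear combination dictated by $e$; this is precisely the step that consumes the two $\mathcal{R}_{\QP}$ constraints (the normalization $a_{i0}\rho^i+a_i^\intercal y^i=1$ and the defining relation for $x$), while the remainder is a standard congruence-and-principal-submatrix argument.
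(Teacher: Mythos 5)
Your proof is correct and is essentially the paper's own argument: your congruence identity $N = EME^\intercal$ with $E = \begin{pmatrix} e^\intercal \\ I_{n+1}\end{pmatrix}$ is exactly the paper's triple product with the bordering matrix $\begin{pmatrix}a_{i0} & a_i^\intercal\\ 1 & 0\\0 & I\end{pmatrix}$, consuming the same two constraints of $\mathcal{R}_{\QP}$ (the normalization $a_{i0}\rho^i+a_i^\intercal y^i=1$ and the relation $a_{i0}(y^i)^\intercal + a_i^\intercal W^i = x^\intercal$), and the reverse direction is the same principal-submatrix observation.
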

Since the matrix in \eqref{eq:BQP-SDPa} is positive-semidefinite, the determinants of its $2\times 2$ principal minors are non-negative. Therefore, Proposition~\ref{prop:SQPconic} shows that, adding \eqref{eq:BQP-SDPa} or, equivalently, \eqref{eq:BQP-SDP} to $\mathcal{R}_{\QP}$ implies the inequalities in \eqref{eq:conic}. In particular, \eqref{eq:Conic_a} (resp. \eqref{eq:Conic_b}) follows by selecting rows and columns indexed $1$ and $j+2$ (resp. $1$ and $2$). Moreover, \eqref{eq:BQP-SDPa} implies the following conic inequalities:
\begin{subequations}\label{eq:conicnew}
    \begin{alignat}{2}
        &y^i_j\rho^i\geq \bigl(y^i_j\bigr)^2 &\quad& \text{ for } i\in[m] \text{ and } j\in[n]\label{eq:Conicnew_a}\\
        &y^i_j y^i_k \geq \bigl(W^i_{jk}\bigr)^2&& \text{ for }i\in[m] \text{ and } j, k\in [n].\label{eq:Conicnew_b}
    \end{alignat}
\end{subequations}
using other $2\times 2$ principal minors of the associated matrix. Clearly, \eqref{eq:Conicnew_a} is redundant since $0\le y^i_j\le \rho^i$. However, \eqref{eq:Conicnew_b} tightens the linear relaxation $\mathcal{R}_{\QP}$ above.

\subsection{Computational comparison on binary fractional programming}\label{section:01fractional}
In this subsection, we present numerical experiments on synthetically generated instances of binary fractional programming (BFP). First, we present our formulation~\textsf{1Term-Conic} for BFP. Recall that~\eqref{eq:cc-reformulation} reformulates~\eqref{eq:computation:lfp}. Here, we use a relaxation of $\mathcal{R}_{\QP}$ along with the conic constraint~\eqref{eq:Conic_b} to relax the feasible region of~\eqref{eq:cc-reformulation} and obtain a mixed-integer conic formulation for BFP, referred to as \textsf{1Term-Conic}:
\begin{subequations}\label{eq:1termconic}
\begin{align}
 \max \quad   &  \sum\limits_{i=1}^{m}\big(b_{i0}\rho^{i}+b_i^\intercal y^i\big)+c^\intercal x \notag \\
	\text{s.t.} \quad & x \in \X \subseteq\{0,1\}^n,\ \eqref{eq:LFP-MC}  \notag \\
 & \max \{ 0,\ y^i_{j} + y^i_k - \rho^i\} \leq W^i_{jk} \leq \min  \{ y^i_j,\  y^i_k \}     && \text{for } i \in [m], j,k \in [n] \label{eq:1termconic-2}  \\ 
&a_{i0}\rho^i +  a_i^\intercal y^i  =1,\ \rho^i\geq 0 && \text{for }i\in[m]  \label{eq:1termconic-3} \\
& x_j = a_{i0}y^i_j + \sum_{k}a_{ik} W^{i}_{jk}   &&\text{for }i\in[m], j \in [n]  \label{eq:1termconic-4} \\
& y^i_j\le \rho^i && \text{for }i\in [m]  \label{eq:1termconic-5}\\
&W^i_{jj} = y^i_j  && \text{for }j\in[n]  \label{eq:1termconic-6}\\
& \rho^i(a_{i0}+a_i^\intercal x)\geq 1  && \text{for } i\in[m]  \label{eq:1termconic-7}.
\end{align} 
\end{subequations}
Constraints~\eqref{eq:1termconic-2} - \eqref{eq:1termconic-6} are derived by specializing $\mathcal{R}_{\QP}$ to unconstrained BFP. Namely, for unconstrained BFP, $\mathcal{R}_{\QP}$ is equivalent to constructing the McCormick envelopes of $x_jx_k$ for $j,k\in [n]$ before homogenizing these inequalities using $\rho^i$. This was referred to as the \eqref{eq:1term} relaxation in Section~\ref{section:BQP}. Although \eqref{eq:LFP-MC} is redundant in this formulation, we include these constraints because we will relax~\eqref{eq:1termconic-2} and~\eqref{eq:1termconic-4} partially to allow the formulation to scale to larger problem sizes. The last constraint~\eqref{eq:1termconic-7} is the conic constraint~\eqref{eq:Conic_b}. We remark that the formulation \textsf{1Term-Conic} is related to the formulation \textsf{CH} proposed in~\cite{chen2024integer} for assortment planning in the context of quick-commerce. There are two types of constraints in formulation \textsf{CH} of~\cite{chen2024integer}, with one being the constraint~\eqref{eq:Conic_b} and another that relaxes certain telescoping sums. 

We will compare our formulation \textsf{1Term-Conic} with two formulations, \textsf{LEF} and \textsf{CEF}, studied in~\cite{mehmanchi2019fractional}. The formulation \textsf{LEF} and \textsf{CEF} are built on the relaxation $\mathcal{R}_{\textsf{LEF}}$ and $\mathcal{R}_{\textsf{CEF}}$, respectively. More specifically, the formulation \textsf{LEF} (resp. \textsf{CEF}) maximize the objective function of \textsf{1Term-Conic} over $\mathcal{R}_{\textsf{LEF}}$ (resp. $\mathcal{R}_{\textsf{CEF}}$) and the binary constraint $x \in \{0,1\}^n$.


In our first experiment, we compare the three formulations, $\textsf{LEF}$, $\textsf{CEF}$ and $\textsf{1Term-Conic}$, in terms of their continuous relaxations on instances of unconstrained BFP, generated using the procedure described in~\cite{mehmanchi2019fractional}. Specifically, for each $(n,m) \in \{(30,3),(50,5), (70,7)\}$, we randomly generated $30$ instances of unconstrained BFP as follows. The coefficient $a_{ij}$ is sampled from a $U[0,20]$ distribution expect for $a_{i0}$ which is sampled from $U[1,20]$, the coefficient $b_{ij}$ is sampled from $U[-20,0]$, and the coefficient $c_i = 0$ for all $i \in [n]$. We solve the continuous relaxation of each of the three formulations for each instance, and then compute the closed $\textsf{LEF}$ gap as
\[
100\% \times  \frac{v_{\textsf{LEF}} - v_{\textsf{model}}}{v_{\textsf{LEF}} - \hat{v}} \quad \text{ for } \textsf{model} \in \{\textsf{LEF}, \textsf{CEF}, \textsf{1Term-Conic}\},
\]
where $\hat{v}$ is the objective value of the best integer solution obtained from $\textsf{LEF}$ after $600$ seconds of computation, and $v_{\textsf{model}}$ is the optimal value of the continuous relaxation of the formulation \textsf{model}. Table~\ref{tab:uniform} shows the continuous relaxation of \textsf{1Term-Conic}  closes up to $60$\% of the relaxation gap while that of \textsf{CEF} closes up to $30\%$. 

\begin{table}[ht]
\setlength{\tabcolsep}{7pt} 
\renewcommand{\arraystretch}{1} 
\begin{center}
\begin{tabular}{lllllr}
\hline
\multirow{2}{*}{$(n,m)$} & \multirow{2}{*}{model}    & \multicolumn{4}{c}{closed \textsf{LEF} gap (\%)}   \\
\cline{3-6}
& & avg. & min. & max. & std. \\
\hline
\multirow{2}{*}{$(30,3)$}  & \textsf{1Term-Conic}      & 64.2    &  44.7  &  100 & 14.7   \\
&\textsf{CEF}  &  33.1    & 19.7    & 60.1     &  8.9  \\
\cline {2-6}
\multirow{2}{*}{$(50,5)$}  & \textsf{1Term-Conic}      & 41.8    &  33.5  &  50.7 & 4.2  \\
&\textsf{CEF}&  20.3   &  15.6 & 26.9    &  2.9  \\
\cline {2-6}
\multirow{2}{*}{$(70,7)$}  & \textsf{1Term-Conic}      & 33.5    & 26.4 &  40.7 & 3.6  \\
&\textsf{CEF}&  15.4   &  11.5 & 18.7 &  1.8 \\
\hline
\end{tabular}
\end{center}
\caption{Comparison of continuous relaxations of \textsf{LEF}, \textsf{CEF} and \textsf{1Term-Conic}}
    \label{tab:uniform}
\end{table}



Our second numerical experiment compares the three formulations by solving assortment optimization problems to global optimality. For~\textsf{1Term-Conic}, we do not introduce $W^i_{jk}$ variables. Instead, we begin with~\textsf{1Term-Conic} without Constraints~\eqref{eq:1termconic-2} and~\eqref{eq:1termconic-4}. Then, for the root relaxation solution, we use the active bound from Constraints~\eqref{eq:1termconic-2} to replace $W^i_{jk}$ in~\eqref{eq:1termconic-4} and introduce the resulting constraint. The formulation so obtained will be referred to as~\textsf{1Term-Conic-R}. All three formulations are solved using~\textsc{Gurobi} with a time limit of 1 hour. The problems are generated using the procedure described in \cite{mehmanchi2019fractional}. Specifically, for $(n,m) \in \{(50,5),(100,10),(200,20),(400,40),(800,80)\}$, we randomly generate 10 instances as follows: $a_{ij} \sim U[0,1]$ and $b_{ij} = a_{ij}r_{i}$ with $r_{i} \sim U[1,3]$ for all $i \in [m]$ and $j \in [n]$, $a_{i0} = 0.1n$ and $b_{i0}=0$ for all $i \in [m] $, $c_j = 0$ for all $j \in [n]$, and $\X = \bigl\{x \in \{0,1\}^n \bigm| \sum_{i \in [n]}x_i\leq \kappa \bigr\}$ with $\kappa = 20\% \cdot n$. Table~\ref{tab:assortment} shows that \textsf{1Term-Conic-R} outperforms the other formulations. More specifically, for most problems, \textsf{1Term-Conic-R} a few branch-and-bound nodes to discover the optimal solution while \textsf{LEF} and \textsf{CEF} require larger branch-and-bound trees. In particular, \textsf{LEF} is unable to solve problems with $(n,m)\in \{(200,20),(400,40),(800,80)\}$ and even a few problems of smaller size. The formulation \textsf{CEF} performs significantly better than \textsf{LEF}, but cannot solve instances with $(n,m)\in \{(400,40),(800,80)\}$. For smaller problems, the time taken by \textsf{LEF} and \textsf{CEF} is often an order of magnitude or more higher than that by \textsf{1Term-Conic-R}.

\begin{table}[ht]
\setlength{\tabcolsep}{5.8pt} 
\renewcommand{\arraystretch}{0.9} 
\begin{center}
\begin{tabular}{llllllllr}
\hline
\multirow{2}{*}{$(n,m)$} & \multirow{2}{*}{formulation}    & \multicolumn{4}{c}{time (seconds)}  & \multirow{2}{*}{nodes}  &  \multirow{2}{*}{solved} & \multirow{2}{*}{rgap} \\
\cline{3-6}
& & avg. & min. & max. & std. \\
\hline
\multirow{3}{*}{$(50,5)$}  & \textsf{1Term-Conic-R}      & 0.10    &  0.07  &  0.15 & 0.03  &  1 & 10  & 0.0\% \\
&\textsf{LEF}  &  0.56    & 0.30    & 1.58     &  0.40 & 2173.4  & 10  & 0.0\% \\
&\textsf{CEF}&   0.80     &  0.20 & 2.33    &  0.60 & 1 & 10  &0.0\%   \\
\cline {2-9}
\multirow{3}{*}{$(100,10)$}  & \textsf{1Term-Conic-R}      & 2.4   & 0.4   & 14.0 & 4.2 & 2 & 10 & 0.0\% \\
&\textsf{LEF}  & 1125.2    &   26.5  & 3600    & 1432.6 & 5189583 & 8 & 1.5\% \\
&\textsf{CEF}& 9.6      & 2.0 &  17.0   & 4.4 &14.6 & 10&0.0\% \\
\cline {2-9}
\multirow{3}{*}{$(200,20)$}  & \textsf{1Term-Conic-R}      & 5.8  & 2.9  & 13.0 & 3.1 & 5.4 & 10 & 0.0\% \\
&\textsf{LEF}  & 3600    &   3600  & 3600    & 0 & 1749502 & 0 & 4.0\% \\
&\textsf{CEF}& 50.0      & 26.1 &  137.4   & 36.1 &263.4 & 10 &0\% \\
\cline {2-9}
\multirow{3}{*}{$(400,40)$} & \textsf{1Term-Conic-R}      & 65.2    & 38.3   & 118.0 &  23.1 & 1 & 10 & 0.0\% \\
&\textsf{LEF} &  3600    & 3600    & 3600    & 0 & 969919 & 0 &5.7\%  \\
&\textsf{CEF}  &   3600  &  3600    &  3600   &  0 & 3 & 0 & $\dagger$ \\
\cline {2-9}
\multirow{3}{*}{$(800,80)$} & \textsf{1Term-Conic-R}  &626.4    & 433.9  &  805.3 & 115.9   &  1  &  10  &  0.0\%  \\
&\textsf{LEF} &   3600    & 3600    & 3600    & 0 & 325.1 & 0 &5.9\% \\
&\textsf{CEF}  &   3600  &  3600    &  3600   &  0 & 1 & 0 & $\dagger$ \\
\hline
\end{tabular}
\end{center}
\caption{Comparison of \textsf{LEF}, \textsf{CEF} and \textsf{1Term-Conic-R} on instances of assortment planning. Here, $(n,m)$ specify problem dimensions. Each row consists of data accumulated over $10$ instances. The ``nodes'' column reports the number of branch \& bound nodes required to solve the problem, ``solved'' is the number of instances solved to optimality, and ``rgap'' is the average of relative gaps, $\frac{z_{\text{best}}-z_{\text{opt}}}{z_{\text{LEF}}-z_{\text{opt}}}$, remaining at the end of the solution process. The ``$\dagger$'' symbol denotes that \textsc{Gurobi} was unable to fully process the root node of the branch-and-bound tree within the time limit of $1$ hour.}\label{tab:assortment}
\end{table}

\subsection{Univariate fractional terms in chemical process design }
Consider 
\begin{equation}\label{eq:chemE}
    \begin{aligned}
    &\max  &&  -ax - b^\intercal y + c^\intercal z \\
    &\st && z_i = \frac{y_i}{x-r_i} \quad \text{ for } i \in [m] \\
    &&& x^L\leq  x \leq x^U ,\ y_i^L \leq y_i \leq y_i^U  \quad \text{ for } i \in [m],
    \end{aligned}\tag{PD}
\end{equation}
where $a \in \R^m$, $b \in \R$, $c \in \R^m$, $r \in \R^m$, and $x^L$, $x^U$, $y_i^L$ and $y_i^U$ are constants. The motivation for this problem comes from chemical process design. In particular, to perform distillation, a column requires vapor flow, and generating this vapor flow requires energy. As such, research has focused on identifying the configuration for a given separation task with the minimum vapor flow. Under standard technical assumptions, the minimum vapor flow can be computed using Underwood's method, see~\cite{gooty2022advances} for details. Similarly, \cite{gooty2023exergy} shows that the liquid phase mole fraction in the condenser/reboiler that is in equilibrium with the vapor can be modeled using a univeriate fraction such as $z_i$ in \eqref{eq:chemE}.

A typical approach to relax the feasible region of~\eqref{eq:chemE} is to reformulate $z_i = \frac{y_i}{x-r_i}$ as $z_i(x-r_i) = y_i$, and then relax the product using McCormick inequalities. To derive a lower (resp. an upper ) bound  on $z_i$, we minimize (resp. maximize) $\frac{y_i}{x-r_i}$ over $x \in [x^L, x^U ]$ and $y_i \in  [y_i^L, y_i^U ]$. We refer to this relaxation as \textsf{Uni-MC}. 
Instead, we relax~\eqref{eq:chemE} using the techniques developed in Section~\ref{section:univariatefractional}. Let
\[
\G = \Bigl\{(1,\nu, x ) \Bigm| 0 \leq  x \leq 1,\ \nu_i =\frac{1}{x-r_i} \text{ for } i \in [m]  \Bigr\}.
\]
We derive the convex hull of $\G$ using Proposition~\ref{prop:momenthull}, and relax $z_i=y_i\nu_i$ using McCormick inequalities. Then, we obtain the following relaxation, referred to as \textsf{Uni-MH}, 
\begin{equation*}
 \begin{aligned}
  	&(1,\nu, x) \in \conv(\G) \\
 &z_i \geq \max \Bigl\{ \frac{1}{1-r_i} y_i + \nu_i - \frac{1}{1-r_i},\ -\frac{1}{r_i} y_i + 2\nu_i + 2\frac{1}{r_i}  \Bigr\} && \text{ for } i \in [m] \\
   & z_i \leq \min \Bigl\{\frac{1}{1-r_i} y_i + 2\nu_i - 2\frac{1}{1-r_i},\ -\frac{1}{r_i} y_i + \nu_i +\frac{1}{r_i} \Bigr\} && \text{ for } i \in [m]. \\
 \end{aligned}
\end{equation*}

We numerically test our relaxation \textsf{Uni-MH} on synthetically generated problem instances. For each $m \in \{5,9,13\}$, we fix $x^L = 0$, $x^U = 1$, $y_i^L = 1$ and $y_i^U=2$, and randomly generate $100$ instances of~\eqref{eq:chemE} as follows: $c_i \sim U[-1,1]$, $r_i \sim U[-1,-0.1]$ for $i \in  [\frac{m+1}{2}]$ and $ r_i \sim U[1.1,2]$ otherwise,  and $(a,b)$ is the gradient of $\sum_{i \in [m]} c_i\frac{y_i}{x-r_i}$ at a point uniformly sampled from $[0,1] \times [1,2]^m$. We solve \textsf{Uni-MC} and \textsf{Uni-MH} for each instance, and then compute the remaining gap as 
\begin{equation*}
    \text{Relative remaining gap} = 100\% \times \frac{v_{\textsf{Uni-MH}}-v_{\textsf{SCIP}}}{v_{\textsf{Uni-MC}}-v_{\textsf{SCIP}}},
\end{equation*}
where $v_{\textsf{SCIP}}$ is the optimal objective value of~\eqref{eq:chemE} returned by \textsf{SCIP}~\cite{bestuzheva2023global}, and  $v_{\textsf{Uni-MH}}$ (resp.  $v_{\textsf{Uni-MC}}$) is the optimal objective value of \textsf{Uni-MH} (resp. \textsf{Uni-MC}). 
In Figure~\ref{fig:relgapchemE}, we show the empirical distribution function of the relative remaining gap for the $100$ instances. Observe that for $50\%$ of the instances, 
the relative remaining gap is below 5\%,  and for 80\% of the instances, the relative remaining gap is below 25\%.
\begin{figure}[ht!]
\begin{center}
	\begin{tikzpicture}[scale=0.6]
\pgfplotsset{
    tick label style={font=\small},
    label style={font=\small},
    legend style={font=\small},
    every axis/.append style={
    very thick,
    tick style={semithick}}
    }
\begin{axis}[
	xmin=-0.01,   xmax=1.03,
	ymin=0.3,   ymax=1.03,
	xlabel={Relative remaining gap},
	ylabel={Fraction of instances},
	xtick={0,0.2,0.4,0.6,0.8,1},
	ytick={0,0.2,0.4,0.6,0.8,1}, 
	legend style={draw=none,
        at={(0.97,0.4)}},
    grid=major,
    cycle list name=color list
        ]
\pgfplotstableread{cdf.txt}\mydata;
\addplot table [x=poi,y=size5] {\mydata};
\addplot table [x=poi,y=size9] {\mydata};
\addplot table [x=poi,y=size13] {\mydata};
 \legend{$m = 5$,$m = 9$,$m = 13$}
\end{axis}
\end{tikzpicture}
\caption{Cumulative distribution plot of the relative remaining gap for instances of \eqref{eq:chemE}.}\label{fig:relgapchemE}
\end{center}
\end{figure}

\section{Conclusions}\label{section:conclusion}
\revision{In this paper, we show that the study of fractional programming problems can leverage results in polynomial optimization literature. This result hinges upon a projective transformation that involves homogenization and renormalization of the convex hull of polynomial functions. We exemplify the usage of this result by connecting with existing studies on a number of convexification techniques. Numerical results indicate that the reformulations and relaxations developed in this paper outperform existing literature in solving assortment optimization problems and close significant gap on a set appearing in chemical process design literature. }

\revision{It is necessary to investigate the practical applications of our convexification techniques. This includes using our formulations on non-synthetic instances, such as assortment planning and distillation configuration design, and incorporating recent developments towards making copositive and semidefinite programming based relaxations tractable from a computational standpoint. 
It is also interesting to study whether the projective transformation, introduced in Theorem~\ref{thm:mainthem}, can be applied in designing approximation algorithms for fractional programming problems. 
}


\appendix
\section{Missing Proofs and Discussions}\label{app:proofs}

\subsection{Proof of Theorem~\ref{thm:fractionalmain}}\label{app:closure}
\revision{
Let $\text{CG}$ denote the right hand side set of \eqref{eq:FprimeClConv}. 
Since $0\cl\conv(\F)$ represents the recession cone of $\cl\conv(\F)$, by Theorem~9.6 in \cite{rockafellar2015convex}, it follows that $\text{CG}$ is closed. By using Theorem~\ref{thm:mainthem} and lifts of $\mathcal{F}$ and $\mathcal{G}$ defined as in~\eqref{eq:lift}, we obtain that  $\conv(\G)\subseteq \text{CG}$. Therefore, it follows that $\cl\conv(\G)\subseteq \text{CG}$. Now, we show the reverse inclusion. Let $g\in \text{CG}$. First, assume $\rho > 0$. Let $f=\frac{g}{\rho}$ and consider a sequence $(f^k)_{k} \subseteq \conv(\F)$ such that $f^k\rightarrow f$. Since $\alpha^\intercal g = 1$, it follows that $\alpha^\intercal f = \frac{1}{\rho} > 0$. Then, there exists a $K$ such that for all $k\ge K$, $\alpha^\intercal f^k > \frac{1}{2\rho}$. We limit the sequence to $(f^k)_{k\ge K}$.
Observe that 
\[
0\le \Bigl\|\frac{f^k}{\alpha^\intercal f^k} - \frac{f}{\alpha^\intercal f}\Bigr\| \le \Bigl\|\frac{f^k}{\alpha^\intercal f^k} - \frac{f}{\alpha^\intercal f^k}\Bigr\| + \Bigl\|\frac{f}{\alpha^\intercal f^k} - \frac{f}{\alpha^\intercal f}\Bigr\|\rightarrow 0,
\]
where the convergence holds since $\|f^k-f\|\rightarrow 0$, $\alpha^\intercal f^k > \frac{1}{2\rho}$, and  $\alpha^\intercal f = \frac{1}{\rho} > 0$. Thus, $g^k := \frac{f^k}{\alpha^\intercal f^k}$ converges to $g := \frac{f}{\alpha^\intercal f}$. Moreover, $g^k \in \conv(\mathcal{G})$, implying that $g \in \cl \conv(\mathcal{G})$. To see that $g^k \in \conv(\mathcal{G})$, we invoke Theorem~\ref{thm:mainthem} and lifts defined as in~\eqref{eq:lift} as follows. As $f^k \in \conv(\mathcal{F})$, its lift $\bigl(\alpha^\intercal f^{k}, f^{k}\bigr)$ belongs to $\conv(\mathcal{S})$, where   $\mathcal{S}$ is defined as in~\eqref{eq:lift}. It follows from Theorem~\ref{thm:mainthem} that $\bigl(\frac{1}{\alpha^\intercal f^k}, \frac{f^k}{\alpha^\intercal f^k}\bigr)\in \conv\bigl(\Phi(\S)\bigr)$, and we have $g^k= \frac{f^k}{\alpha^\intercal f^k}\in \conv(\G)$ by projecting out the first coordinate. To complete the proof, we treat the case where $\rho = 0$. In this case, $g$ belongs to the recession cone of $\cl\conv(\F)$ and $\alpha^\intercal g = 1$. Since $\F\ne\emptyset$, we may choose $f\in \ri\bigl(\conv(\F)\bigr)$.
Consequently, $f + \beta g\in \conv(\F)$ for all $\beta > 0$ (see Corollary 8.3.1 in \cite{rockafellar2015convex}). As above,  $\frac{(f+\beta g)}{\alpha^\intercal f + \beta}\in \conv(\G)$, where we have used that $\alpha^\intercal g = 1$. 
Taking $\beta\rightarrow \infty$, we have that $g\in \cl\conv(\G)$.
\Halmos
}


\subsection{Proof of Corollary~\ref{cor:two-term}}
Let $I = \{i,j\}$ where $i,j \in [n]$ with $i\neq j$. Due to~\eqref{eq:lfp}, it suffices to describe $\QP_{G(I)}$.  For $k \in V \setminus \{i,j\}$, let $C_{k}$ denote the triangle subgraph of $G(I)$ whose vertex set is $V$ and edge set is $\bigl\{(i,j),(j,k),(i,k)\bigr\}$, and define $\F_{C_k} = \bigl\{(x_i,x_j,x_k, z_{ij},z_{jk},z_{ik}) \in  \{0,1\}^6 \bigm| z_{ij} = x_{i}x_j,\ z_{jk} = x_{j}x_k,\ z_{ik} = x_{i}x_k \bigr\}$. Then, it follows that $\QP_{G(I)}$ is the convex hull of $\bigl\{(x,z)\bigm| (x_i,x_j,x_k, z_{ij},z_{jk},z_{ik}) \in \F_{C_k} \forall k \in V\setminus \{i,j\} \bigr\}$. It follows from~\cite{padberg1989boolean} that $\conv(\F_{C_k})$ can be described using McCormick and triangle inequalities. Moreover, for each $k \neq i,j$, the projection of $\F_{C_k}$ onto the space of $(x_i,x_j,z_{ij})$ variables is $\{(x_i,x_j,x_ix_j)\mid x_i \in \{0,1\},\ x_j \in \{0,1\}\}$, a set of  affinely independent points. Thus, the proof is complete by invoking Lemma~\ref{lemma:common_simplex} to obtain $\QP_{G(I)}$ as the intersection of $\conv(\F_{C_k})$. \Halmos

\subsection{Proof of Proposition~\ref{prop:rlt-lfp}}
Let $I \subseteq V$ with $\vert I \vert = k$. Due to~\eqref{eq:lfp}, it suffices to show that  $\RLT_{k}$  yields a description of $\QP_{G(I)}$.  
Consider a set defined by all multilinear monomials in variable $(x_i)_{i \in I}$, that is, $\mathcal{M}_0 = \bigl\{(x,z^0) \bigm|  x \in \{0,1\}^n,\ z^0_e = \prod_{t \in e}x_t \; \forall e \subseteq I \text{ s.t. } \vert e \vert \geq 2 \bigr\}$.
For each $j \notin I$, consider a set defined by all multilinear monomials in variable $(x_i)_{i\in I \cup \{j\}}$,
\[
\mathcal{M}_{j}:=\biggl\{(x,z^0,z^j) \biggm| (x,z^0) \in \mathcal{M}_0, z^j_e =  \prod_{t\in e}x_t \forall  e\subseteq I \cup \{j\} \text{ s.t. } \vert e \vert \geq 2 \text{ and }  j \in e \biggr\}.
\]
Let $\mathcal{M} := \bigl\{(x,z^0,(z^j)_{j \notin I}) \bigm| (x,z^0,z^j) \in \mathcal{M}_{j} \; \forall j \notin I \bigr\}$. Clearly, the convex hull of $\mathcal{M}$ is an extended formulation of $\QP_{G(I)}$. The proof is complete if we argue that $\RLT_{k}$ describes the convex hull of $\mathcal{M}$. It follows readily from Lemma~\ref{lemma:rlt} that $\RLT_{k}$ describes the convex hull of $\mathcal{M}_j$ for every $j \notin I$. Now, observe that for each $j \notin I$, $\proj_{(x,z^0)}(\mathcal{M}_{j}) = \mathcal{M}_0$, and $\mathcal{M}_0$ is a set of affinely independent points. Thus, we can invoke the Lemma~\ref{lemma:common_simplex} to obtain $\conv(\mathcal{M}) =\bigl\{ (x,z^0,(z^j)_{j \notin I} ) \bigm| (x,z^0,z^j)  \in \conv(\mathcal{M}_{j}) \; \forall j \notin I \bigr\}$.  \Halmos

\subsection{Proof of Proposition~\ref{prop:qfp-bqp}}
Let $f_G(x) = (1,x,z)$, where $z_{ij}=x_ix_j$ for $(i,j)\in E$, and consider $\F_G=\{f_G(x) \mid x \in [0,1]^V\}$.  Then, $\conv(\F_G) = \conv\bigl\{ f_G(x) \bigm| x \in \{0,1\}^V \bigr\} = \bigl\{(1,x,z) \bigm| (x,z) \in \QP_G \bigr\}$,
where the first equality follows from Theorem~1 in \cite{burer2009nonconvex} and the second equality is by the definition of $\QP_G$. Moreover, by \eqref{eq:defqG} and boundedness of $\G_G$, Theorem~\ref{thm:fractionalmain} yields the expression of $\conv(\G_G)$ and $\QP_G$. \Halmos

\subsection{Proof of Corollary~\ref{cor:poly-case}}
    If $G$ is series-parallel graph, by Theorem 10 of~\cite{padberg1989boolean}, the $\rho\BQP_G$ is described using the odd-cycle inequalities, and thus, by Proposition~\ref{prop:qfp-bqp}, the convex hull of $\G_G$ is given by the odd-cycle relaxation. Moreover, by~\cite{barahona1986cut}, the separation problem of $\BQP_G$ is polynomial time solvable and, thus, by Remark~\ref{rmk:eq-sep}, so is the separation problem of the odd cycle relaxation. Therefore, due to the polynomial equivalence between separation and optimization~\cite{grotschel2012geometric}, minimizing a linear function over the odd-cycle relaxation is polynomial-time solvable.  Hence,~(\ref{eq:bfp}) is polynomial-time solvable. \Halmos

\subsection{Proof of Proposition~\ref{prop:frac-quad}}
\begin{lemma}[Lemma 5.1 in \cite{balas1998disjunctive}]\label{lemma:facial-decom}
	Consider a subset $\S$ of $\R^n$, and a hyperplane $H:=\{x \mid \ell(x)  = \beta \}$ so that $\ell(x)  \leq \beta$  is a valid linear inequality of $\S$. Then, $\conv(\S \cap H) = \conv(\S) \cap H$. 
\end{lemma}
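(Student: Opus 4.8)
The plan is to prove the two inclusions separately, with all the content residing in the nontrivial direction. The easy inclusion $\conv(\S \cap H) \subseteq \conv(\S) \cap H$ follows purely from monotonicity of the convex hull and convexity of $H$: since $\S \cap H \subseteq \S$ we have $\conv(\S \cap H) \subseteq \conv(\S)$, and since $\S \cap H \subseteq H$ with $H$ an affine (hence convex) set, we also have $\conv(\S \cap H) \subseteq \conv(H) = H$. Intersecting the two containments gives the claim.

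For the reverse inclusion $\conv(\S) \cap H \subseteq \conv(\S \cap H)$, I would take an arbitrary point $x \in \conv(\S) \cap H$ and write it, using the definition of the convex hull (finitely many terms by Carath\'eodory), as $x = \sum_{i} \lambda_i s_i$ with $s_i \in \S$, $\lambda_i > 0$, and $\sum_i \lambda_i = 1$. The key step exploits that $\ell(\cdot) \le \beta$ is \emph{valid} for $\S$, so $\ell(s_i) \le \beta$ for every $i$. Because $\ell$ is affine and the weights sum to one, $\ell(x) = \sum_i \lambda_i \ell(s_i)$; but $x \in H$ forces $\ell(x) = \beta = \sum_i \lambda_i \beta$. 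Subtracting, $\sum_i \lambda_i \bigl(\beta - \ell(s_i)\bigr) = 0$ where every summand is nonnegative and every $\lambda_i$ is strictly positive, so each term must vanish, i.e. $\ell(s_i) = \beta$ for all $i$ appearing in the combination. Hence each such $s_i$ lies in $\S \cap H$, and $x$ is a convex combination of points of $\S \cap H$, giving $x \in \conv(\S \cap H)$.

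There is no serious obstacle here; the entire argument is the observation that an affine function attaining its upper bound $\beta$ at a convex combination must attain that bound at every active vertex of the combination. The only points requiring a moment of care are (i) ensuring the representation of $x$ uses only points of $\S$ with strictly positive weights, which is automatic after discarding zero-weight terms, and (ii) confirming that $\ell$ commutes with the convex combination, which holds for affine $\ell$ precisely because $\sum_i \lambda_i = 1$. I would state the lemma for affine $\ell$ (the setting of a linear inequality $\ell(x) \le \beta$) and note that no closure or boundedness hypotheses on $\S$ are needed, since the argument manipulates only finite convex representations and therefore yields the exact hull rather than its closure.
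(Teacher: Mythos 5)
Your proof is correct. Note that the paper itself does not prove this lemma --- it is stated with a citation to Lemma 5.1 of \cite{balas1998disjunctive} and used as a black box in the proof of Proposition~\ref{prop:frac-quad} --- and your argument is precisely the standard one underlying the cited result: the trivial inclusion by monotonicity of $\conv(\cdot)$ and convexity of $H$, and the reverse inclusion by observing that when an affine function bounded above by $\beta$ on $\S$ attains the value $\beta$ at a finite convex combination with strictly positive weights, it must equal $\beta$ at every point of the combination. Your two points of care (discarding zero-weight terms, and $\ell$ commuting with convex combinations because the weights sum to one) are exactly the right ones, and you are also correct that no closure or boundedness hypotheses are needed.
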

If $\mathcal{L} = \R^n$, that is $C = 0$ and $d=0$, the result follows from Theorem~\ref{thm:fractionalmain} by letting $f(x) = (1,x,xx^\intercal)$ and $\alpha = (1, a, A)$. Now, assume that $\mathcal{L}$ is not $\R^n$. By Theorem~\ref{thm:fractionalmain}, to obtain the convex hull of $\G$, it suffices to derive the convex hull of $\F'$, where $\F':=\{(x, X)\mid x \in \X \cap \mathcal{L},\ X = xx^\intercal \}$. We argue that $\F'$ can be obtained as the intersection of $\F$ and a supporting hyperplane.  Let $c^i$ be the $i^\text{th}$ row of $C$, and $d_i$ be the $i^\text{th}$ entry of $d$. Then, $x$ satisfies $c^i x = d_i$ for $i \in [m]$ if and only if $\Tr(Cxx^\intercal C^\intercal - Cxd^\intercal - dx^\intercal C^\intercal + dd^\intercal) = \sum_{i \in [m]}(c^ix - d_i)^2 = 0$. Thus, $\F'= \bigl\{(x,X) \in \F \bigm| \Tr(CX C^\intercal - Cxd^\intercal - dx^\intercal C^\intercal + dd^\intercal) =0 \bigr\}$.  Since $\Tr(CX C^\intercal - Cxd^\intercal - dx^\intercal C^\intercal + dd^\intercal) \geq 0$ is a valid linear inequality for $\F$, it follows from Lemma~\ref{lemma:facial-decom} that 
\[
\conv(\F') = \Bigl\{(x,X) \Bigm| \Tr\bigl( CXC^\intercal - Cxd^\intercal - d x^\intercal C^\intercal + dd^\intercal \bigr) = 0, (x,X) \in \conv(\F) \Bigr\}.
\]
This, together with Theorem \ref{thm:fractionalmain}, gives the description of $\conv(\G)$. \Halmos

\subsection{Proof of Corollary~\ref{cor:bivariate}}
Lift $\S$ to $	\{(t,x) \mid t = b_0\rho +  b^\intercal y + \langle B, Y \rangle,\ x_i = a_1Y_{i1} + a_2 Y_{i2} + y_i\;  i =1,2,\ (\rho,y,Y) \in \G\}$, where 
	\[
	\G = \biggl\{\frac{(1,x,xx^\intercal)}{a^\intercal x +a_0} \biggm| Cx\leq d \biggr\}.
	\]
Since $\S$ is obtained as a linear transformation of $\G$, it suffices to convexify $\G$. Let $\F=\bigl\{(x,X) \bigm| Cx\leq d,\ X = xx^\intercal \bigr\}$. By Theorem 2 in~\cite{burer2015gentle}, we obtain $\conv(\F) = \bigl\{(x,X) \bigm| Z(1,x,X) \succeq 0,\ CXC^\intercal -Cxd^\intercal - dx^\intercal C^\intercal + dd^\intercal \geq 0 \bigr\}$.
Thus, using Proposition~\ref{prop:frac-quad}, we obtain a convex hull description of $\G$. \Halmos

\subsection{Proof of Lemma~\ref{lemma:change-basis}}
Let $\mathcal{P}^{n+1}$ be the vector space of univariate polynomials with degree at most $n+1$. The set of monomials $\{x^0,x^1, \ldots x^{n+1}\}$ is a basis of $\mathcal{P}^{n+1}$. Now, we show that $\{f_0(\cdot), f_1(\cdot),\ldots, f_{n+1}(\cdot)\}$ is also a basis of $\mathcal{P}^{n+1}$, and $T$ is an invertible map between two bases. For each $i \in \{0 \} \cup [n+1]$, the linear combination of $f_0(\cdot), \ldots, f_{n+1}(\cdot)$ with weights $\beta_{i0}, \ldots, \beta_{i(n+1)}$ interpolates the monomial $x^i$ over the given set of reals $\{r_0, \ldots, r_{n+1}\}$, that is, $        x^i = \beta_{i0}f_0(x) + \cdots + \beta_{i(n+1)}f_{n+1}(x) $ for $ x \in \{r_0, \ldots, r_{n+1}\}$.
This can be easily verified because $f_i(x)$ is $0$ when $x = r_j$ for $j\ne i$. Since $r_0, \ldots, r_{n+1}$ are $n+2$ distinct reals, the interpolating function is unique. In other words, $(x^0, x,\ldots, x^{n+1}) = T(f_0(x), \ldots, f_{n+1}(x))$ for all $x$. Thus, by a counting argument, we conclude that $\{f_i(\cdot)\}_{i=0}^{n+1}$ is also a basis and $T$ is invertible.  In particular, $\M_{n+1} = T\F$ and $\F = T^{-1}\M_{n+1}$, completing the proof. 
        \Halmos

\subsection{Proof of Proposition~\ref{prop:momenthull}}
First, we obtain the convex hull characterization of $\G$. Observe that 
	\[
	\begin{aligned}
	\conv(\G) &= \bigl\{ \nu  \bigm| \nu \in \rho\conv(\F),\ \nu_0=1,\ \rho \geq 0 \bigr\}
	 = \bigl\{\nu \bigm| \nu \in \rho\conv(T^{-1}\M_{n+1}),\ \nu_0=1,\ \rho \geq 0 \bigr\} \\
		& = \bigl\{\nu \bigm| T\nu \in \rho\conv(\M_{n+1}),\ \nu_0=1,\ \rho \geq 0 \bigr\} 
		 = \bigl\{\nu \bigm| T\nu \in \cone(\M_{n+1}),\ \nu_0=1 \bigr\}.
	\end{aligned}
	\]
	where the first equality follows from Theorem~\ref{thm:fractionalmain},  the second equality holds due to $\F = T^{-1}\mathcal{M}_{n+1}$ in Lemma~\ref{lemma:change-basis}, the third equality holds since  convexification commutes with linear transformation, and the last equality holds by the definition of $\cone(\M_{n+1})$. 
 
	Next, we derive the convex hull of $\M_{n+1}$ in terms of $\conv(\G)$. By partial fraction decomposition, there exists real numbers $\alpha_1, \ldots, \alpha_n$ such that 
\[
\frac{1}{f_0(x)} = \alpha_{1} \frac{1}{x-r_1} + \ldots + \alpha_{n} \frac{1}{x-r_n}. 
\]
This implies that $1 = \sum_{i \in [n]}\alpha_i f_i(x)$. As defined in the statement of Lemma~\ref{lemma:change-basis} and shown in its proof, $(\beta_{0i})_{i=0}^{n+1}$ is the unique weight vector such that $1=\sum_{i = 0}^{n+1} \beta_{0i}f_i(x)$. Thus, $\alpha_{i} = \beta_{0i}$ for $i \in [n]$ and $\beta_{00} = \beta_{0(n+1)} = 0$. We will use Theorem~\ref{thm:mainthem} to obtain the convex hull of $\F$. To do so, we derive the convex hull of $\Phi(\F)$ (see \eqref{eq:introducePhi}), which is as follows
\[
\begin{aligned}
\conv\bigl(\Phi(\F)\bigr) & = \conv \Bigl\{\Bigl(\frac{1}{f_0(x)}, \frac{1}{x-r_1}, \ldots, \frac{1}{x-r_n}, x-r_0 \Bigr) \Bigm| x \in \X \Bigr\} \\
&=\conv \Bigl\{ \Bigl(\sum_{i \in [n]} \alpha_i \nu_i, \nu_{1}, \ldots, \nu_{n+1} \Bigr) \Bigm| (1,\nu_1, \ldots, \nu_{n+1}) \in \G \Bigr\}    \\
&= \Bigl\{ \Bigl(\sum_{i \in [n]}\alpha_i \nu_i, \nu_{1}, \ldots, \nu_{n+1} \Bigr) \Bigm| (1, \nu_1,\ldots, \nu_{n+1}) \in \conv(\G) \Bigr\} ,
\end{aligned}
\]
where the first equality holds by the definition of $\Phi$, the second equality follows from the partial fraction decomposition, and the last equality follows because convexification commutes with linear transformation. 
Using Theorem~\ref{thm:mainthem}, we obtain 
\[
\begin{aligned}
\conv(\F) &= \Bigl\{(f_0,f_1, \ldots, f_{n+1})\Bigm| (1,f_1, \ldots,f_{n+1}) \in f_{0}\conv\bigl(\Phi(\F)\bigr),\ f_0 \geq 0 \Bigr\} \\
	& = \Bigl\{(f_0,f_1, \ldots, f_{n+1})\Bigm|  \sum_{i \in [n]} \alpha_i f_i  = 1,\  (f_0,f_1, \ldots,f_{n+1}) \in \cone(\G) \Bigr\},
\end{aligned}
\]
where the first equality holds from Theorem~\ref{thm:mainthem}, and the second equality follows from the convex hull description of $\Phi(\F)$. Let $\beta_{0 }$ be the $0^\text{th}$ row of $T$, that is  $\beta_{0 }:= (\beta_{00}, \ldots, \ldots, \beta_{0(n+1)})$. Now, 
\[
\begin{aligned}
\conv(\M_{n+1}) &= \conv(T\F)= T\conv(\F) \\
&= \bigl\{\mu \bigm| \langle \beta_{0 }, T^{-1} \mu \rangle = 1,\ T^{-1}\mu \in \cone(\G) \bigr\} \\
&= \bigl\{\mu \bigm| \mu_0 = 1,\ T^{-1}\mu \in \cone(\G) \bigr\},    
\end{aligned}
\]
where the first equality follows from Lemma~\ref{lemma:change-basis}, the third equality is obtained by using the fact that $\beta_{0i} = \alpha_{i}$ for $i \in [n]$ and $\beta_{00} = \beta_{0(n+1)}=0$.  \Halmos

\subsection{Proof of Theorem~\ref{them:RLTconverge}}
We will show that $\conv(\G^n) = \mathcal{R}^n$. This implies that  
\[
\proj_{(\rho,y)}(\mathcal{R}^n) = \proj_{(\rho,y)}\bigl(\conv(\G^n)\bigr) =\conv\bigl( \proj_{(\rho,y)}(\G^n)\bigr) = \conv(\G),
\]
where the second equality holds since convexification commutes with projection, and the last equality holds  since $\proj_{(\rho,y)}(\G^n) = \G$. 
 For each $i \in [m]$, we obtain that $\proj_u(\G^n_i)  = U$.  Since $U$ is a set of affinely independent points, it follows from Lemma~\ref{lemma:common_simplex} that the convex hull of $\G^n$ is $\bigl\{ (\rho,y,u) \bigm| (\rho^i,y^i,u) \in \conv(\G^n_i) \; \forall i \in [m] \bigr\}$. The proof is complete if we show that the convex hull of $\G^n_i$ is $\mathcal{R}^n_i$. We prove this by invoking Theorem~\ref{thm:fractionalmain}.  Let $f^i(x) = (1,x,v^i)$, where $v^i_S = \prod_{j \in S}x_j(a_{i0}+a_i^\intercal x)$ for nonempty $S \subseteq [n]$, and let $\F^i = \bigl\{f^i(x) \mid x \in \{0,1\}^n\bigr\}$.  By Theorem 3 in~\cite{sherali1990hierarchy}, the $n^\text{th}$ level RLT of $\X$ describes the convex hull of $U$. Thus, we obtain 
\[
\begin{aligned}
    \conv(\F^i) = & \Bigl\{\bigl(1,x^i,v^i\bigr) \Bigm|  z^i \in \RLT_n(\X),\ x^i_j = z^i_{\{j\}} \text{ for } j\in[n],\\
    &v^i_S = a_{i0}+\sum_{j \in S}\Bigl(a_{ij} z^i_{S\cup \{j\}}\Bigr) \text{ for } S \subseteq [n] \text{ s.t. } 1 \leq \vert S \vert \leq k  \Bigr\}.
\end{aligned}
\]
Let $\alpha_i= (a_{i0},a_i,0)$. Then, $a_{i0}+ a_i^\intercal x = \alpha_i^\intercal f^i(x)$. Defining $y^i = \rho^ix^i$, $u = \rho^iv^i$ and $w^i = \rho^iz^i$, Theorem~\ref{thm:fractionalmain} shows that $\conv(\G^n_i)$ is $\mathcal{R}^n_i$.  \Halmos


\subsection{Proof of Proposition~\ref{prop:QP-MC}}
Since every point in $\mathcal{R}_{\QP}$ satisfies \eqref{eq:normalize} and \eqref{eq:domainPolytope}, it suffices to show that the McCormick constraints used to relax \eqref{eq:ybilinear} are implied in $\mathcal{R}_{\QP}$. Towards this end, we will show a more general result. Assume $\alpha^\intercal x\le \bar{\alpha}$ and $\beta^\intercal x \le \bar{\beta}$ are implied by $\overline{C}x\le \bar{d}$.
We show that $\mathcal{R}_{\QP}$ implies the inequality $\beta^\intercal W^i \alpha -\bar{\alpha}\beta^\intercal y^i - \bar{\beta}(y^i)^\intercal \alpha + \rho^i\bar{\alpha}\bar{\beta}\ge 0$, which is obtained by linearizing $\bigl(\alpha^\intercal x-\bar{\alpha}\bigr)\bigl(\beta^\intercal x -\bar{\beta}\bigr)\ge 0$ and then homogenizing the resulting inequality using $\rho^i$. Since the two inequalities in the product are implied by the linear inequalities, there exists an $\omega\ge 0$ and a $\nu\ge 0$ such that $\overline{C}^\intercal \omega = \alpha$, $\bar{d}^\intercal \omega \le \bar{\alpha}$,  $\nu^\intercal \overline{C} = \beta^\intercal$, and $\nu^\intercal \bar{d} \le \bar{\beta}$. Therefore, it follows that every point in $\mathcal{R}_{\QP}$ satisfies:
\begin{equation}\label{eq:homogenizelinearize}
\begin{aligned}
    &\nu^\intercal \overline{C}W^i\overline{C}^\intercal \omega - \nu^\intercal \bar{d}\Bigl((y^i)^\intercal \overline{C}^\intercal-\rho^i\bar{d}^\intercal\Bigr)\omega - \nu^\intercal \overline{C}y^i\bar{d}^\intercal\omega \ge 0\\    
    &\Rightarrow \beta^\intercal W^i \alpha - \bar{\beta} \Bigl((y^i)^\intercal \overline{C}^\intercal-\rho^i\bar{d}^\intercal\Bigr)\omega -\beta^\intercal y^i\bar{d}^\intercal\omega \ge 0\\ 
    &\Rightarrow \beta^\intercal W^i \alpha - \bar{\beta}(y^i)^\intercal\alpha - \bigl(\beta^\intercal y^i-\rho^i\bar{\beta}\bigr)\bar{d}^\intercal\omega  \ge 0\\
    &\Rightarrow \beta^\intercal W^i \alpha - \bar{\beta}(y^i)^\intercal \alpha - \bigl(\beta^\intercal y^i-\rho^i\bar{\beta}\bigr)\bar{\alpha}  \ge 0,
\end{aligned}
\end{equation}
where the first implication is because $\bigl(\nu^\intercal \bar{d}-\bar{\beta}\bigr)\bigl((y^i)^\intercal \overline{C}^\intercal-\rho^i\bar{d}^\intercal\bigr)\omega\ge 0$, and the third implication is because $\bigl(\beta^\intercal y^i-\rho^i\bar{\beta}\bigr)\bigl(\bar{d}^\intercal\omega - \bar{\alpha}\bigr)\ge 0$. Now, this shows that the homogenization of the linearization of $\bigl(x_j(L)-x_j\bigr)\Bigl(a_{i0}+a_i^\intercal x - \frac{1}{\rho^i(L)}\Bigr)\ge 0$ is implied in $\mathcal{R}_{\QP}$, which in turn implies that $\mathcal{R}_{\QP}$ satisfies
\begin{align*}
    &-e_j^\intercal W^i a_i + x_j(L)(y^i)^\intercal a_i + \bigl(y^i_j-\rho^ix_j(L)\bigr)\Bigl(\frac{1}{\rho^i(L)}-a_{i0}\Bigr)  \ge 0\\
    &\Rightarrow -e_j^\intercal W^i a_i + x_j(L) + \frac{y^i_j}{\rho^i(L)} - \frac{\rho^ix_j(L)}{\rho^i(L)} -a_{i0}y^i_j \ge 0\\
    &\Rightarrow -x_j + x_j(L) + \frac{y^i_j}{\rho^i(L)} - \frac{\rho^ix_j(L)}{\rho^i(L)} \ge 0\\
    &\Rightarrow -\rho^i(L)x_j + \rho^i(L)x_j(L) + y^i_j - \rho^i x_j(L)\ge 0,
\end{align*}
which is one of the McCormick inequalities. The first implication above is because $\rho^ia_{i0}+(y^i)^\intercal a_i = 1$, and the second implication is because $x_j = a_{i0}y^i_j + e_j^\intercal W^ia_i$. The remaining McCormick inequalities follow using a similar argument with other bounds on $x_j$ and $\rho^i$.

\Halmos

\subsection{Example illustrating $\mathcal{R}_{\QP} \subsetneq \mathcal{R}_{\textsf{LEF}}$ }\label{app:strictcontainment}
\revision{We give an example to illustrate that the containment may be strict. Let $m=1$, $n=2$, $a_{0}=a_{1}=a_{2}=1$ and consider the unconstrained BFP, in which case $\mathcal{R}_{\QP}=\conv(\G^1)$ because when $n=2$ the 1st-level RLT relaxation $\widehat{F}$ coincides with the convex hull of $\F$. We verify that $p = (\rho,y_1,y_2,x_1,x_2) = (\frac{1}{2},\frac{1}{4},\frac{1}{4},\frac{1}{4},\frac{1}{4})\in \mathcal{R}_{\textsf{LEF}}\setminus \conv(\G^1)$ when taking $x_{j}(L)=0$, $x_{j}(U)=1$, for $j=1,2$ and $\rho(L)=\frac{1}{3}$, $\rho(U)=1$. To verify that $p \not\in\conv(\G^1)$, observe that $x_1 = \frac{x_1 (1+x_1+x_2)}{1+x_1+x_2} \ge \frac{2x_1}{1+x_1+x_2} = 2y_1$. To see that $p\in \mathcal{R}_{\textsf{LEF}}$, it follows easily that $p$ satisfies \eqref{eq:normalize}. Moreover, 
\begin{equation*}
    \begin{pmatrix}
    \rho\\
    x_j\\
    y_j
    \end{pmatrix}
    = \dfrac{3}{4} \begin{pmatrix} 
    \rho(L)\\
    x_j(L)\\
    x_j(L)\rho(L)
    \end{pmatrix}
    + \dfrac{1}{4} \begin{pmatrix}
        \rho(U)\\
        x_j(U)\\
        x_j(U)\rho(U)
    \end{pmatrix}
\end{equation*}
which implies that $y_j$ satisfies the McCormick inequalities in \eqref{eq:LFP-MC} which relax $y_j = x_j\rho$.}

\subsection{Proof of Proposition~\ref{prop:genMcCormick}}
All such inequalities are obtained as linearizations of products of certain inequalities. Therefore, we will first show a more general result. Assume that the inequality $\gamma_0\rho^i + \gamma^\intercal y^i - \bar{\gamma}\le 0$ is implied by $\overline{C}\bar{y}\le \rho^i \bar{d}$ and $a_{i0}\rho^i + a_i^\intercal y^i = 1$, and $\beta^\intercal x - \bar{\beta}\le 0$ is implied by $\overline{C}x\le \bar{d}$. Then, we will show that the homogenization of 
\begin{align}
    &\bigl(\gamma_0\rho^i + \gamma^\intercal y^i - \bar{\gamma}\bigr)\bigl(\beta^\intercal x -\bar{\beta}\bigr)\ge 0\label{eq:generalnonlinearproduct}\\
    &\text{expressed as } \gamma_0(y^i)^\intercal \beta + \gamma^\intercal W^i\beta - \bar{\gamma}x^\intercal \beta - \bar{\beta}\gamma_0\rho^i - \bar{\beta}\gamma^\intercal y^i + \bar{\gamma}\bar{\beta} \ge 0,\label{eq:generalMcCormick}
\end{align}
is implied by the constraints in $\mathcal{R}_{\QP}$. Since the two inequalities whose product is homogenized are implied by linear inequalities in $\X$, there exists an $\omega \ge 0$, $\theta\in\R$, and a $\nu\ge 0$ such that
$\overline{C}^\intercal \omega + \theta a_i = \gamma$,
$-\bar{d}^\intercal \omega + \theta a_{i0} = \gamma_0$, $\theta \le \bar{\gamma}$, $\nu^\intercal \overline{C} = \beta$, and $\nu^\intercal \bar{d}\le\bar{\beta}$. 
It follows that $0\ge x^\intercal \overline{C}^\intercal \omega -\bar{d}^\intercal\omega = x^\intercal (\gamma-\theta a_i) + (\gamma_0 -\theta a_{i0})$.
Now, points in $\mathcal{R}_{\QP}$ satisfy:
\begin{align}
&(\gamma-\theta a_i)^\intercal W^i \beta - (\theta a_{i0}-\gamma_{0})(y^i)^\intercal \beta - (\gamma-\theta a_i)^\intercal y^i\bar{\beta}+\rho^i(\theta a_{i0}-\gamma_0)\bar{\beta}\notag  \\
&\quad + \theta \Bigl(\bigl(a_{i0}y^i +a_i^\intercal W^i - x^\intercal \bigr) \beta - \bigl(a_{i0}\rho^i + a_i^\intercal y^i - 1\bigr)\bar{\beta}\Bigr)\ge 0\notag\\
&\Rightarrow \gamma^\intercal W^i\beta + \gamma_0(y^i)^\intercal\beta - \gamma^\intercal y^i\bar{\beta} - \rho^i\gamma_0\bar{\beta} -\theta (x^\intercal \beta -\bar{\beta})\ge 0\notag\\
&\Rightarrow \gamma^\intercal W^i\beta + \gamma_0(y^i)^\intercal\beta - \gamma^\intercal y^i\bar{\beta} - \rho^i\gamma_0\bar{\beta} - \bar{\gamma} (x^\intercal \beta -\bar{\beta})\ge 0.\notag
\end{align}
The first inequality follows since $\mathcal{R}_{\QP}$ satisfies $a_{i0}y^i +a_i^\intercal W^i = x^\intercal$, $a_{i0}\rho^i + a_i^\intercal y^i =1$, and \eqref{eq:homogenizelinearize}, where $\alpha = \gamma - \theta a_i$ and $\bar{\alpha}=\theta a_{i0}-\gamma_0$. The modified form of \eqref{eq:homogenizelinearize} holds because $x^\intercal (\gamma-\theta a_i) + (\gamma_0 -\theta a_{i0})\le 0$ and $\beta^\intercal x - \bar{\beta}\le 0$ are a consequence of $\overline{C}x\le \bar{d}$. The second implication follows since $\bigl(\theta-\bar{\gamma}\bigr)\bigl(x^\intercal \beta-\bar{\beta}\bigr)\ge 0$.
Replacing $z_i$ with $b_{i0}\rho^i + b^\intercal y^i$ and $d_i$ with $a_{i0}+a_i^\intercal x$, we reduce $\bigl(z_i-z_i^L\bigr)\bigl(d_i - d_i^L\bigr)\ge 0$ into an inequality of the form \eqref{eq:generalnonlinearproduct}. Then, our earlier argument shows that its linearization, which yields one of the McCormick inequalities, is implied by the inequality corresponding to \eqref{eq:generalMcCormick}, after $b_{i0}\rho^i + b^\intercal y^i$ (resp. $b_{i0}y^\intercal + b^\intercal W^i$) is replaced by $z_i$ (resp. variables representing $z_ix^\intercal$). A similar argument shows that the remaining McCormick inequalities are also implied in $\mathcal{R}_{\QP}$.\Halmos

\subsection{Disaggregation of products}\label{ec:disaggregate}
\begin{proposition}\label{eq:disaggregate}
Consider the equality $z_i(a_{i0} + a_i^\intercal x) = b_{i0}+b_i^\intercal x$, where $z_i$ represents $\frac{b_{i0}+b_i^\intercal x}{a_{i0}+a_i^\intercal x}$. Assume $h_{ij}$ represents  $z_ix_j$. Then, $\mathcal{R}_{\QP}$ implies the following inequalities:
\begin{enumerate}
    \item McCormick inequalities that relax $h_{ij} = z_ix_j$ based on bounds of $z_i$ and $x_j$  obtained over $\overline{C}x\le \bar{d}$. 
    \item The equality $a_{i0}z_i + a_i^\intercal h_i = b_{i0}+b_i^\intercal x$.
\end{enumerate}
\end{proposition}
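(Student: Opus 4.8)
The plan is to begin by recording how the two symbols $z_i$ and $h_{ij}$ are expressed in the variables of $\mathcal{R}_{\QP}$. Since $\rho^i$, $y^i$ and $W^i$ are the $\rho^i$-homogenizations of $1$, $x$ and $xx^\intercal$ (so that $y^i_j$ plays the role of $\rho^i x_j$ and $W^i_{jk}$ that of $\rho^i x_j x_k$, with $W^i$ symmetric), the representative of $z_i=\frac{b_{i0}+b_i^\intercal x}{a_{i0}+a_i^\intercal x}$ is $z_i=b_{i0}\rho^i+b_i^\intercal y^i$, and that of $h_{ij}=z_i x_j$ is $h_{ij}=b_{i0}y^i_j+\sum_k b_{ik}W^i_{jk}$, that is $h_i=b_{i0}y^i+W^i b_i$. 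All claims will be verified with respect to these expressions.

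For the second claim, I would simply substitute and simplify. Writing $a_{i0}z_i+a_i^\intercal h_i=a_{i0}b_{i0}\rho^i+a_{i0}b_i^\intercal y^i+b_{i0}\sum_j a_{ij}y^i_j+\sum_{j,k}a_{ij}b_{ik}W^i_{jk}$, the symmetry of $W^i$ lets me rewrite the double sum as $\sum_{j,k}b_{ij}a_{ik}W^i_{jk}$; combining this with $a_{i0}b_i^\intercal y^i$ and using the defining relation $x_j=a_{i0}y^i_j+\sum_k a_{ik}W^i_{jk}$ of $\mathcal{R}_{\QP}$ recovers exactly $b_i^\intercal x$. The leftover terms collapse to $b_{i0}\bigl(a_{i0}\rho^i+a_i^\intercal y^i\bigr)$, which equals $b_{i0}$ by the normalization constraint $a_{i0}\rho^i+a_i^\intercal y^i=1$. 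Hence $a_{i0}z_i+a_i^\intercal h_i=b_{i0}+b_i^\intercal x$, as claimed; this part is a short, routine calculation.

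The McCormick inequalities in the first claim I would obtain by invoking the general statement already established inside the proof of Proposition~\ref{prop:genMcCormick}: whenever $\gamma_0\rho^i+\gamma^\intercal y^i-\bar\gamma\le 0$ is implied by $\overline{C}y^i\le\rho^i\bar{d}$ together with $a_{i0}\rho^i+a_i^\intercal y^i=1$, and $\beta^\intercal x-\bar\beta\le 0$ is implied by $\overline{C}x\le\bar{d}$, then $\mathcal{R}_{\QP}$ implies the homogenized linearization~\eqref{eq:generalMcCormick} of the product $(\gamma_0\rho^i+\gamma^\intercal y^i-\bar\gamma)(\beta^\intercal x-\bar\beta)\ge 0$. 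Each of the four McCormick inequalities for $h_{ij}=z_ix_j$ is the linearization of a product of one of $z_i-z_i^L,\ z_i^U-z_i$ with one of $x_j-x_j^L,\ x_j^U-x_j$, so it suffices to check the two hypotheses for these factors. The bounds $x_j^L\le x_j\le x_j^U$ are by definition implied by $\overline{C}x\le\bar{d}$. Feeding the four sign combinations $(\gamma_0,\gamma,\bar\gamma)\in\bigl\{(-b_{i0},-b_i,-z_i^L),(b_{i0},b_i,z_i^U)\bigr\}$ and $(\beta,\bar\beta)\in\bigl\{(-e_j,-x_j^L),(e_j,x_j^U)\bigr\}$ into the general statement and simplifying then reproduces the four McCormick inequalities verbatim, with $b_{i0}y^i_j+\sum_k b_{ik}W^i_{jk}$ appearing as the linearization $h_{ij}$ of $z_ix_j$.

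The only non-mechanical step is the remaining hypothesis, namely that $z_i^L\le b_{i0}\rho^i+b_i^\intercal y^i\le z_i^U$ is valid on the homogenized feasible set. This is exactly the Charnes--Cooper correspondence of Theorem~\ref{thm:fractionalmain}: minimizing (resp. maximizing) the ratio $z_i$ over $\{x\mid \overline{C}x\le\bar{d}\}$ equals minimizing (resp. maximizing) the linear form $b_{i0}\rho+b_i^\intercal y$ over $\{(\rho,y)\mid \overline{C}y\le\rho\bar{d},\ a_{i0}\rho+a_i^\intercal y=1,\ \rho\ge 0\}$, so $z_i^L$ and $z_i^U$ bound $b_{i0}\rho^i+b_i^\intercal y^i$ on that set and the bound inequalities are therefore implied by its defining constraints, with Farkas/LP duality supplying the multipliers $\omega,\theta$ used in~\eqref{eq:homogenizelinearize}. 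I expect this translation of the ratio bounds into valid inequalities of the homogenized region to be the main obstacle; once Theorem~\ref{thm:fractionalmain} and the boundedness of $\X$ secure it (taking care of the $\rho^i=0$ recession directions), the remainder is precisely the bookkeeping already carried out for Proposition~\ref{prop:genMcCormick}.
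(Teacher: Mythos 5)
Your proposal is correct and takes essentially the same route as the paper's proof: it uses the same representations $z_i = b_{i0}\rho^i + b_i^\intercal y^i$ and $h_i = b_{i0}y^i + W^i b_i$, verifies the equality $a_{i0}z_i + a_i^\intercal h_i = b_{i0}+b_i^\intercal x$ by the same substitution using $a_{i0}y^i + W^i a_i = x$ and $a_{i0}\rho^i + a_i^\intercal y^i = 1$, and derives the McCormick inequalities by instantiating the general homogenized-product inequality \eqref{eq:generalMcCormick} from the proof of Proposition~\ref{prop:genMcCormick}. The only (welcome) refinements are that you negate the parameters so the factors literally satisfy the $\le 0$ hypotheses of that lemma, where the paper is sign-sloppy, and you make explicit—via Theorem~\ref{thm:fractionalmain} and LP duality—why the ratio bounds $z_i^L, z_i^U$ yield valid linear inequalities on the homogenized set, a step the paper leaves implicit.
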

\begin{proof}{Proof.}
We have that:
\begin{align*}
 b_i^\intercal W^i_{\cdot j} + b_{i0}y^i_j - (b_{i0}\rho^i + b_i^\intercal y^i) x_j(L) - z_i^L (x_j-x_j(L)) \ge 0 \\
 \Rightarrow h_{ij} - z_ix_j(L) - z_i^L(x_j-x_j(L))\ge 0.
\end{align*}
    The first inequality follows since \eqref{eq:generalMcCormick} is implied in $\mathcal{R}_{\QP}$ with $\beta$ as the $j^{\text{th}}$ standard vector, $\bar{\beta} = x_j(L)$, $\gamma= b_i$, $\gamma_0 = b_{i0}$, and $\bar{\gamma} = z_i^L$. This is because $x_j - x_j(L)\ge 0$ and $b_{i0}\rho^i + b_i^\intercal y^i - z_i^L \ge 0$ are implied by $\overline{C}x\le \bar{d}$. The  implication follows since we will represent $b_i^\intercal W^i_{\cdot j} + b_{i0}y^i_j$ as $h_{ij}$ and $b_{i0}\rho^i + b_i^\intercal y^i$ as $z_i$. This shows that one of the McCormick inequalities is implied in $\mathcal{R}_{\QP}$. The proof for the remaining McCormick inequalities is similar. It also follows that:
    \begin{equation*}
        a_{i0}z_i + h_i^\intercal a_i = b_{i0}a_{i0}\rho^i + a_{i0}b_i^\intercal y^i + b_i^\intercal W^ia_i + b_{i0}(y^i)^\intercal a_i = 
        b_{i0} + b_i^\intercal x,
    \end{equation*}
    where the first equality is by the definitions of $z_i$ and $h_i$ given above and the second equality is because $W^i a_i + a_{i0} y^i = x$ and $a_{i0}\rho^i + (y^i)^\intercal a_i = 1$. \Halmos
\end{proof}

\subsection{Proof of Proposition~\ref{prop:SQPconic}}
Clearly, \eqref{eq:BQP-SDPa} implies \eqref{eq:BQP-SDP} since the matrix in the latter constraint is a principal submatrix of the matrix in \eqref{eq:BQP-SDPa}. The following shows the reverse implication:
\begin{equation*}
    \begin{alignedat}{3}
    &\eqref{eq:BQP-SDP}&&\Rightarrow\begin{pmatrix}a_{i0} & a_i^\intercal\\ 1 & 0\\0 & I\end{pmatrix}
    \begin{pmatrix}\rho^i & (y^i)^\intercal \\ y^i & W^i \end{pmatrix}
    \begin{pmatrix}a_{i0} & 1 & 0 \\ a_i & 0 & I\end{pmatrix}
    \succeq 0, ~~~\forall i\in[m]\\
    &&&\Rightarrow\begin{pmatrix}
    a_{i0}^2\rho^i+a_{i0}a_i^\intercal y^i+(a_{i0}(y^i)^\intercal +a_i^\intercal W^i)a_i  & ~~a_{i0}\rho^i+a_i^\intercal y^i & ~~a_{i0}(y^i)^\intercal+a_i^\intercal W^i\\
    a_{i0}\rho^i+a_i^\intercal y^i & \rho^i & (y^i)^\intercal\\
    a_{i0}y^i+W^ia_i & y^i & W^i
    \end{pmatrix}\succeq 0,\forall i\in[m]\\
    &&&\Rightarrow \eqref{eq:BQP-SDPa}.
    \end{alignedat}    
    \end{equation*} 
The last implication follows because $a_{i0}\rho^i+a_i^\intercal y^i=1$ and $a_{i0}(y^i)^\intercal + a_i^\intercal W^i = x^\intercal$.
\Halmos

\bibliographystyle{spmpsci}      
\bibliography{reference} 

\begin{thebibliography}{10}
\providecommand{\url}[1]{{#1}}
\providecommand{\urlprefix}{URL }
\expandafter\ifx\csname urlstyle\endcsname\relax
  \providecommand{\doi}[1]{DOI~\discretionary{}{}{}#1}\else
  \providecommand{\doi}{DOI~\discretionary{}{}{}\begingroup
  \urlstyle{rm}\Url}\fi

\bibitem{anstreicher2010computable}
Anstreicher, K.M., Burer, S.: Computable representations for convex hulls of
  low-dimensional quadratic forms.
\newblock Mathematical Programming \textbf{124}(1-2), 33--43 (2010)

\bibitem{arora1977set}
Arora, S., MC, P., Swarup, K.: The set covering problem with linear fractional
  functional.
\newblock Indian Journal of Pure and Applied Mathematics \textbf{8}, 578--588
  (1977)

\bibitem{azar2003optimal}
Azar, Y., Cohen, E., Fiat, A., Kaplan, H., Racke, H.: Optimal oblivious routing
  in polynomial time.
\newblock In: Proceedings of the thirty-fifth annual ACM symposium on Theory of
  computing, pp. 383--388 (2003)

\bibitem{balas1998disjunctive}
Balas, E.: Disjunctive programming: Properties of the convex hull of feasible
  points.
\newblock Discrete Applied Mathematics \textbf{89}(1-3), 3--44 (1998)

\bibitem{barahona1986cut}
Barahona, F., Mahjoub, A.R.: On the cut polytope.
\newblock Mathematical Programming \textbf{36}, 157--173 (1986)

\bibitem{barros1998discrete}
Barros, A.I.: Discrete and fractional programming techniques for location
  models, vol.~3.
\newblock Springer Science \& Business Media (1998)

\bibitem{beck2009convex}
Beck, A., Teboulle, M.: A convex optimization approach for minimizing the ratio
  of indefinite quadratic functions over an ellipsoid.
\newblock Mathematical Programming \textbf{118}(1), 13--35 (2009)

\bibitem{benson2002using}
Benson, H.P.: Using concave envelopes to globally solve the nonlinear sum of
  ratios problem.
\newblock Journal of Global Optimization \textbf{22}(1-4), 343--364 (2002)

\bibitem{berman1985optimal}
Berman, O., Larson, R.C., Chiu, S.S.: Optimal server location on a network
  operating as an \text{M/G/1} queue.
\newblock Operations Research \textbf{33}(4), 746--771 (1985)

\bibitem{bestuzheva2023global}
Bestuzheva, K., Chmiela, A., M{\"u}ller, B., Serrano, F., Vigerske, S.,
  Wegscheider, F.: Global optimization of mixed-integer nonlinear programs with
  \text{SCIP} 8.
\newblock arXiv preprint arXiv:2301.00587  (2023)

\bibitem{bezanson2017julia}
Bezanson, J., Edelman, A., Karpinski, S., Shah, V.B.: Julia: A fresh approach
  to numerical computing.
\newblock SIAM Review \textbf{59}(1), 65--98 (2017)

\bibitem{blekherman2012semidefinite}
Blekherman, G., Parrilo, P.A., Thomas, R.R.: Semidefinite optimization and
  convex algebraic geometry.
\newblock SIAM (2012)

\bibitem{bogomolov2015small}
Bogomolov, Y., Fiorini, S., Maksimenko, A., Pashkovich, K.: Small extended
  formulations for cyclic polytopes.
\newblock Discrete \& Computational Geometry \textbf{53}, 809--816 (2015)

\bibitem{bonami2018globally}
Bonami, P., G{\"u}nl{\"u}k, O., Linderoth, J.: Globally solving nonconvex
  quadratic programming problems with box constraints via integer programming
  methods.
\newblock Mathematical Programming Computation \textbf{10}(3), 333--382 (2018)

\bibitem{boros1992chvatal}
Boros, E., Crama, Y., Hammer, P.L.: Chv{\'a}tal cuts and odd cycle inequalities
  in quadratic 0--1 optimization.
\newblock SIAM Journal on Discrete Mathematics \textbf{5}(2), 163--177 (1992)

\bibitem{boros1993cut}
Boros, E., Hammer, P.L.: Cut-polytopes, boolean quadric polytopes and
  nonnegative quadratic pseudo-boolean functions.
\newblock Mathematics of Operations Research \textbf{18}(1), 245--253 (1993)

\bibitem{borrero2017fractional}
Borrero, J.S., Gillen, C., Prokopyev, O.A.: Fractional 0--1 programming:
  applications and algorithms.
\newblock Journal of Global Optimization \textbf{69}, 255--282 (2017)

\bibitem{burer2009copositive}
Burer, S.: On the copositive representation of binary and continuous nonconvex
  quadratic programs.
\newblock Mathematical Programming \textbf{120}(2), 479--495 (2009)

\bibitem{burer2015gentle}
Burer, S.: A gentle, geometric introduction to copositive optimization.
\newblock Mathematical Programming \textbf{151}(1), 89--116 (2015)

\bibitem{burer2009nonconvex}
Burer, S., Letchford, A.N.: On nonconvex quadratic programming with box
  constraints.
\newblock SIAM Journal on Optimization \textbf{20}(2), 1073--1089 (2009)

\bibitem{burer2015trust}
Burer, S., Yang, B.: The trust region subproblem with non-intersecting linear
  constraints.
\newblock Mathematical Programming \textbf{149}(1-2), 253--264 (2015)

\bibitem{busygin2005feature}
Busygin, S., Prokopyev, O.A., Pardalos, P.M.: Feature selection for consistent
  biclustering via fractional 0--1 programming.
\newblock Journal of Combinatorial Optimization \textbf{10}, 7--21 (2005)

\bibitem{charnes1962programming}
Charnes, A., Cooper, W.W.: Programming with linear fractional functionals.
\newblock Naval Research Logistics Quarterly \textbf{9}(3-4), 181--186 (1962)

\bibitem{chen2011minimizing}
Chen, X., Womersley, R.S., Ye, J.J.: Minimizing the condition number of a
  \text{Gram} matrix.
\newblock SIAM Journal on Optimization \textbf{21}(1), 127--148 (2011)

\bibitem{chen2024integer}
Chen, Y., He, T., Rong, Y., Wang, Y.: An integer programming approach for
  quick-commerce assortment planning.
\newblock arXiv preprint arXiv:2405.02553  (2024)

\bibitem{cheridito2013reward}
Cheridito, P., Kromer, E.: Reward-risk ratios.
\newblock Journal of Investment Strategies \textbf{3}(1) (2013)

\bibitem{de1990cut}
De~Simone, C.: The cut polytope and the boolean quadric polytope.
\newblock Discrete Mathematics \textbf{79}(1), 71--75 (1990)

\bibitem{del2017polyhedral}
Del~Pia, A., Khajavirad, A.: A polyhedral study of binary polynomial programs.
\newblock Mathematics of Operations Research \textbf{42}(2), 389--410 (2017)

\bibitem{del2018multilinear}
Del~Pia, A., Khajavirad, A.: The multilinear polytope for acyclic hypergraphs.
\newblock SIAM Journal on Optimization \textbf{28}(2), 1049--1076 (2018)

\bibitem{del2018decomposability}
Del~Pia, A., Khajavirad, A.: On decomposability of multilinear sets.
\newblock Mathematical Programming \textbf{170}(2), 387--415 (2018)

\bibitem{del2021running}
Del~Pia, A., Khajavirad, A.: The running intersection relaxation of the
  multilinear polytope.
\newblock Mathematics of Operations Research \textbf{46}(3), 1008--1037 (2021)

\bibitem{del2022multilinear}
Del~Pia, A., Khajavirad, A.: The multilinear polytope of beta-acyclic
  hypergraphs has polynomial extension complexity.
\newblock arXiv preprint arXiv:2212.11239  (2022)

\bibitem{deza1997geometry}
Deza, M.M., Laurent, M., Weismantel, R.: Geometry of cuts and metrics, vol.~2.
\newblock Springer (1997)

\bibitem{falk1994image}
Falk, J.E., Palocsay, S.W.: Image space analysis of generalized fractional
  programs.
\newblock Journal of Global Optimization \textbf{4}, 63--88 (1994)

\bibitem{fawzi2016sparse}
Fawzi, H., Saunderson, J., Parrilo, P.A.: Sparse sums of squares on finite
  abelian groups and improved semidefinite lifts.
\newblock Mathematical Programming \textbf{160}, 149--191 (2016)

\bibitem{fiorini2012linear}
Fiorini, S., Massar, S., Pokutta, S., Tiwary, H.R., De~Wolf, R.: Linear vs.
  semidefinite extended formulations: exponential separation and strong lower
  bounds.
\newblock In: Proceedings of the forty-fourth annual ACM symposium on Theory of
  computing, pp. 95--106 (2012)

\bibitem{frenk2005fractional}
Frenk, J.B., Schaible, S.: Fractional programming.
\newblock Springer (2005)

\bibitem{gale1963neighborly}
Gale, D.: Neighborly and cyclic polytopes.
\newblock In: Symposia in Pure Mathematics, vol.~7, pp. 225--232 (1963)

\bibitem{gomez2021mixed}
G{\'o}mez, A., Prokopyev, O.A.: A mixed-integer fractional optimization
  approach to best subset selection.
\newblock INFORMS Journal on Computing \textbf{33}(2), 551--565 (2021)

\bibitem{gooty2022advances}
Gooty, R.T., Agrawal, R., Tawarmalani, M.: Advances in \text{MINLP} to identify
  energy-efficient distillation configurations.
\newblock Operations Research  (2022)

\bibitem{gooty2023exergy}
Gooty, R.T., Mathew, T.J., Tawarmalani, M., Agrawal, R.: An \text{MINLP}
  formulation to identify thermodynamically-efficient distillation
  configurations.
\newblock Computers \& Chemical Engineering \textbf{178}, 108369 (2023)

\bibitem{grotschel2012geometric}
Gr{\"o}tschel, M., Lov{\'a}sz, L., Schrijver, A.: Geometric algorithms and
  combinatorial optimization, vol.~2.
\newblock Springer Science \& Business Media (2012)

\bibitem{gurobi}
{Gurobi Optimization, LLC}: {Gurobi Optimizer Reference Manual} (2022).
\newblock \urlprefix\url{https://www.gurobi.com}

\bibitem{hansen1990boolean}
Hansen, P., Poggi~de Arag{\~a}o, M.V., Ribeiro, C.C.: Boolean query
  optimization and the 0-1 hyperbolic sum problem.
\newblock Annals of Mathematics and Artificial Intelligence \textbf{1}, 97--109
  (1990)

\bibitem{ho2017second}
Ho-Nguyen, N., Kilinc-Karzan, F.: A second-order cone based approach for
  solving the trust-region subproblem and its variants.
\newblock SIAM Journal on Optimization \textbf{27}(3), 1485--1512 (2017)

\bibitem{joyce2023convex}
Joyce, A., Yang, B.: Convex hull results on quadratic programs with
  non-intersecting constraints.
\newblock Mathematical Programming pp. 1--20 (2023)

\bibitem{junger2021exact}
J{\"u}nger, M., Mallach, S.: Exact facetial odd-cycle separation for maximum
  cut and binary quadratic optimization.
\newblock INFORMS Journal on Computing \textbf{33}(4), 1419--1430 (2021)

\bibitem{karlin1951geometry}
Karlin, S., Shapley, L.S., CA, R.C.S.M.: Geometry of moment spaces.
\newblock Rand Corporation (1951)

\bibitem{konno1989bond}
Konno, H., Inori, M.: Bond portfolio optimization by bilinear fractional
  programming.
\newblock Journal of the Operations Research Society of Japan \textbf{32}(2),
  143--158 (1989)

\bibitem{lanciano2023survey}
Lanciano, T., Miyauchi, A., Fazzone, A., Bonchi, F.: A survey on the densest
  subgraph problem and its variants.
\newblock ACM Computing Surveys  (2023)

\bibitem{li1994global}
Li, H.L.: A global approach for general 0--1 fractional programming.
\newblock European Journal of Operational Research \textbf{73}(3), 590--596
  (1994)

\bibitem{lubin2023jump}
Lubin, M., Dowson, O., Garcia, J.D., Huchette, J., Legat, B., Vielma, J.P.:
  \text{JuMP} 1.0: recent improvements to a modeling language for mathematical
  optimization.
\newblock Mathematical Programming Computation pp. 1--9 (2023)

\bibitem{luce2012individual}
Luce, R.D.: Individual choice behavior: A theoretical analysis.
\newblock Courier Corporation (2012)

\bibitem{matsui1996np}
Matsui, T.: \text{NP}-hardness of linear multiplicative programming and related
  problems.
\newblock Journal of Global Optimization \textbf{9}, 113--119 (1996)

\bibitem{mccormick1976computability}
McCormick, G.P.: Computability of global solutions to factorable nonconvex
  programs: Part i - \text{Convex} underestimating problems.
\newblock Mathematical Programming \textbf{10}(1), 147--175 (1976)

\bibitem{megiddo1978combinatorial}
Megiddo, N.: Combinatorial optimization with rational objective functions.
\newblock Mathematics of Operations Research \textbf{4}(4), 414--424 (1979)

\bibitem{mehmanchi2019fractional}
Mehmanchi, E., G{\'o}mez, A., Prokopyev, O.A.: Fractional 0--1 programs: links
  between mixed-integer linear and conic quadratic formulations.
\newblock Journal of Global Optimization \textbf{75}, 273--339 (2019)

\bibitem{mehmanchi2021solving}
Mehmanchi, E., G{\'o}mez, A., Prokopyev, O.A.: Solving a class of feature
  selection problems via fractional 0--1 programming.
\newblock Annals of Operations Research \textbf{303}(1-2), 265--295 (2021)

\bibitem{mendez2014branch}
M{\'e}ndez-D{\'\i}az, I., Miranda-Bront, J.J., Vulcano, G., Zabala, P.: A
  branch-and-cut algorithm for the latent-class logit assortment problem.
\newblock Discrete Applied Mathematics \textbf{164}, 246--263 (2014)

\bibitem{misener2014antigone}
Misener, R., Floudas, C.A.: \text{ANTIGONE}: algorithms for continuous/integer
  global optimization of nonlinear equations.
\newblock Journal of Global Optimization \textbf{59}(2-3), 503--526 (2014)

\bibitem{nguyen2009optimizing}
Nguyen, H., Franke, K., Petrovic, S.: Optimizing a class of feature selection
  measures.
\newblock In: NIPS 2009 Workshop on Discrete Optimization in Machine Learning:
  Submodularity, Sparsity \& Polyhedra (DISCML), Vancouver, Canada (2009)

\bibitem{padberg1989boolean}
Padberg, M.: The boolean quadric polytope: some characteristics, facets and
  relatives.
\newblock Mathematical programming \textbf{45}(1), 139--172 (1989)

\bibitem{prokopyev2009equitable}
Prokopyev, O.A., Kong, N., Martinez-Torres, D.L.: The equitable dispersion
  problem.
\newblock European Journal of Operational Research \textbf{197}(1), 59--67
  (2009)

\bibitem{quesada1995global}
Quesada, I., Grossmann, I.E.: A global optimization algorithm for linear
  fractional and bilinear programs.
\newblock Journal of Global Optimization \textbf{6}, 39--76 (1995)

\bibitem{rockafellar2015convex}
Rockafellar, R.T.: Convex analysis.
\newblock Princeton university press (2015)

\bibitem{schmudgen2017moment}
Schm{\"u}dgen, K.: The moment problem, vol. 277.
\newblock Springer (2017)

\bibitem{SCHRIJVER1983104}
Schrijver, A.: Short proofs on the matching polyhedron.
\newblock Journal of Combinatorial Theory, Series B \textbf{34}(1), 104--108
  (1983)

\bibitem{sen2018conic}
Sen, A., Atamt{\"u}rk, A., Kaminsky, P.: A conic integer optimization approach
  to the constrained assortment problem under the mixed multinomial logit
  model.
\newblock Operations Research \textbf{66}(4), 994--1003 (2018)

\bibitem{shen2018fractional}
Shen, K., Yu, W.: Fractional programming for communication systems—part i:
  Power control and beamforming.
\newblock IEEE Transactions on Signal Processing \textbf{66}(10), 2616--2630
  (2018)

\bibitem{sherali1990hierarchy}
Sherali, H.D., Adams, W.P.: A hierarchy of relaxations between the continuous
  and convex hull representations for zero-one programming problems.
\newblock SIAM Journal on Discrete Mathematics \textbf{3}(3), 411--430 (1990)

\bibitem{stancu2012fractional}
Stancu-Minisian, I.M.: Fractional programming: theory, methods and
  applications, vol. 409.
\newblock Springer Science \& Business Media (2012)

\bibitem{sturm2003cones}
Sturm, J.F., Zhang, S.: On cones of nonnegative quadratic functions.
\newblock Mathematics of Operations research \textbf{28}(2), 246--267 (2003)

\bibitem{tawarmalani2010inclusion}
Tawarmalani, M.: Inclusion certificates and simultaneous convexification of
  functions (2010).
\newblock Working paper

\bibitem{tawarmalani2002global}
Tawarmalani, M., Ahmed, S., Sahinidis, N.V.: Global optimization of 0-1
  hyperbolic programs.
\newblock Journal of Global Optimization \textbf{24}(4), 385--416 (2002)

\bibitem{tawarmalani2013explicit}
Tawarmalani, M., Richard, J.P.P., Xiong, C.: Explicit convex and concave
  envelopes through polyhedral subdivisions.
\newblock Mathematical Programming \textbf{138}(1-2), 531--577 (2013)

\bibitem{tawarmalani2001semidefinite}
Tawarmalani, M., Sahinidis, N.V.: Semidefinite relaxations of fractional
  programs via novel convexification techniques.
\newblock Journal of Global Optimization \textbf{20}(2), 133--154 (2001)

\bibitem{tawarmalani2004global}
Tawarmalani, M., Sahinidis, N.V.: Global optimization of mixed-integer
  nonlinear programs: A theoretical and computational study.
\newblock Mathematical programming \textbf{99}(3), 563--591 (2004)

\bibitem{tawarmalani2005polyhedral}
Tawarmalani, M., Sahinidis, N.V.: A polyhedral branch-and-cut approach to
  global optimization.
\newblock Mathematical Programming \textbf{103}(2), 225--249 (2005)

\bibitem{underwood1949fractional}
Underwood, A.: Fractional distillation of multicomponent mixtures.
\newblock Industrial \& Engineering Chemistry \textbf{41}(12), 2844--2847
  (1949)

\bibitem{xinying2023guide}
Xinying~Chen, V., Hooker, J.N.: A guide to formulating fairness in an
  optimization model.
\newblock Annals of Operations Research \textbf{326}(1), 581--619 (2023)

\bibitem{yang2018quadratic}
Yang, B., Anstreicher, K., Burer, S.: Quadratic programs with hollows.
\newblock Mathematical Programming \textbf{170}, 541--553 (2018)

\end{thebibliography}


\begin{thebibliography}{}
%
%
\bibitem{RefJ}
Author, Article title, Journal, Volume, page numbers (year)
\bibitem{RefB}
Author, Book title, page numbers. Publisher, place (year)
\end{thebibliography}

\end{document}